\let\oldtocsection=\tocsection
\let\oldtocsubsection=\tocsubsection
\let\oldtocsubsubsection=\tocsubsubsection
\renewcommand{\tocsection}[2]{\hspace{0em}\oldtocsection{#1}{#2}}
\renewcommand{\tocsubsection}[2]{\hspace{1em}\oldtocsubsection{#1}{#2}}
\renewcommand{\tocsubsubsection}[2]{\hspace{2em}\oldtocsubsubsection{#1}{#2}}
\DeclareSymbolFont{fouriersymbols}{FMS}{futm}{m}{n}
\DeclareSymbolFont{fourierlargesymbols}{FMX}{futm}{m}{n}
\DeclareMathDelimiter{\VERT}{\mathord}{fouriersymbols}{152}{fourierlargesymbols}{147}
\DeclareMathOperator{\supp}{supp}
\newcommand{\R}{\mathbb{R}}
\newcommand{\N}{\mathbb{N}}
\newcommand{\pr}{\mathbb{P}}
\newcommand{\ex}{\mathbb{E}}
\newcommand{\Cc}{\mathcal{C}}
\newcommand{\F}{\mathcal{F}}
\newcommand{\h}{\mathcal{H}}
\newcommand{\Tt}{\mathcal{T}}
\newcommand{\half}{\frac{1}{2}}
\newcommand{\Mf}{\mathfrak{M}}
\newcommand{\LSI}{\mathrm{LSI}}
\newcommand{\WLSI}{\mathrm{WLSI}}
\newcommand{\WLSIs}{\mathrm{WLSIs}}
\newtheorem{thm}{Theorem}[section]
\newtheorem{lem}[thm]{Lemma}
\newtheorem{cor}[thm]{Corollary}
\newtheorem{prop}[thm]{Proposition}
\newtheorem{dfn}[thm]{Definition}
\newtheorem{assu}[thm]{Assumption}
\numberwithin{equation}{section}
\theoremstyle{definition}
\newtheorem{rem}[thm]{Remark}
\newtheorem{Ex}[thm]{Example}
\title{Transportation-Cost inequalities for non-linear Gaussian functionals}
\author{Ioannis Gasteratos}
\address{Department of Mathematics, Imperial College London}
\email{i.gasteratos@imperial.ac.uk}
\author{Antoine Jacquier}
\address{Department of Mathematics, Imperial College London, and the Alan Turing Institute}
\email{a.jacquier@imperial.ac.uk}
\thanks{The authors acknowledge financial support from the EPSRC grant EP/T032146/1.}
\subjclass[2020]{28C20, 60F10, 60G15, 60H10}
\keywords{Gaussian processes, concentration of measure, rough
paths, regularity structures, transportation inequalities, rough volatility}
\begin{document}
\begin{abstract} We study concentration properties for laws of non-linear Gaussian functionals on metric spaces. Our focus lies on measures with non-Gaussian tail behaviour which are beyond the reach of Talagrand's classical Transportation-Cost Inequalities (TCIs). 
Motivated by solutions of Rough Differential Equations and relying on a suitable contraction principle, 
we prove generalised TCIs for functionals that arise in the theory of regularity structures and, in particular, in the cases of rough volatility and the two-dimensional Parabolic Anderson Model. 
In doing so, we also extend existing results on TCIs for diffusions driven by Gaussian processes.
\end{abstract}

\maketitle

\tableofcontents

\section{Introduction}

 Talagrand's Transportation-Cost Inequalities (TCIs) have been well studied in the literature, especially due to their connections with concentration of measure, exponential integrability and deviation estimates. 
 Typically, they are of the following form: a probability measure~$\mu$, defined on a metric space 
 $(\mathcal{X}, g)$, satisfies the $p$-TCI for  $p\geq 1$ if there exists $C>0$ such that for any other probability measure~$\nu$ on~$\mathcal{X}$,
 \begin{equation}
     \label{eq: pTCI}
        W_p(\mu, \nu)\leq \sqrt{C H(\nu\;|\;\mu)},
 \end{equation}
 where $W_p$ is the $p$-Wasserstein distance with respect to~$g$ and~$H$ the relative entropy or Kullback-Leibler divergence (see Section~\ref{Section: TCIs} for the precise definitions). 
 Such inequalities were first considered by Marton~\cite{marton1996bounding} and Talagrand~\cite{talagrand1996transportation} and further investigated by Bobkov-G\"otze~\cite{bobkov1999exponential}, Otto-Villani~\cite{otto2000generalization} to name only a few.

 A crucial feature of $p$-TCIs is their close relation to the Gaussian distribution. In particular, Talagrand~\cite{talagrand1996transportation} first showed that Gaussian measures satisfy a $2$-TCI with a dimension-free constant~$C$, while Feyel-\"Ust\"unel~\cite{feyel2002measure} proved a similar inequality on abstract Wiener spaces and with respect to the Cameron-Martin distance. 
It was moreover established~\cite{bobkov1999exponential, djellout2004transportation} 
that the 1-TCI, the weakest among $p$-TCIs, is equivalent to Gaussian concentration.

 Apart from Gaussian measures themselves, the laws of many Gaussian functionals of interest, such as solutions to stochastic differential equations, satisfy $p$-TCI inequalities. In particular, the $1$-TCI, on pathspace for the law of a multidimensional diffusion 
 process
 \begin{equation}\label{eq: SDE}
      dY_t= b(Y_t) dt+\sigma(Y_t)dX_t,
 \end{equation}
 with~$X$ a standard Brownian motion and $b, \sigma$ bounded and Lipschitz continuous, was proved in~\cite{djellout2004transportation}. The case where~$X$ is a fractional Brownian motion (fBm) with Hurst parameter $H>\half$ was studied by Saussereau~\cite{saussereau2012transportation}. There, it was shown that the law of~$Y$ satisfies a $1$-TCI on pathspace if either~$X$ is one-dimensional or~$X$ is multi-dimensional and $\sigma$ is not state-dependent. 
 Subsequently, Riedel~\cite{riedel2017transportation} extended the results of~\cite{saussereau2012transportation} by showing that, for a wide class of Gaussian drivers~$X$ with paths of finite $r$-variation for some $r\leq 2$, the law of~$Y$ satisfies a $(2-\epsilon)$-TCI for all $\epsilon \in (0,2)$ and with respect to finite $r$-variation metrics.
 
   The work of~\cite{riedel2017transportation} establishes TCIs under the assumption that the driver~$X$ has equal or higher path regularity than standard Brownian motion. In this regime, pathwise solution theories are available via Young integration. For the case of rougher signals, namely fBm with Hurst parameter $H\in(\frac{1}{4}, \half]$,~\eqref{eq: SDE} can be treated in the framework of Lyons' theory of rough paths~\cite{lyons1998differential}. In a nutshell, a pathwise solution theory is available upon considering an enhanced driver~$\mathbf{X}$ consisting of~$X$ along with its iterated integrals. The integral in~\eqref{eq: SDE} is then considered in the sense of rough integration against the rough path~$\mathbf{X}$ (the interested reader is referred to~\cite{friz2020course, friz2010multidimensional} for a thorough exposition of rough paths theory). 

   In contrast to the aforementioned examples, solutions of Gaussian Rough Differential Equations (RDEs) provide a class of Gaussian functionals that fall beyond the reach of Talagrand's $p$-TCIs for any $p\geq 1$. Indeed, consider a Gaussian process~$X$ with paths of finite $r$-variation and Cameron-Martin space~$\h$.
   Cass, Litterer and Lyons~\cite{cass2013integrability} 
   (see also~\cite{friz2013integrability}) establish non-Gaussian upper bounds for tail probabilities of $Y$.
   In particular, if~$X$ admits a rough path lift~$\mathbf{X}$ and there exists $q\in[1, 2)$ with $1/r+1/q>1$ such that $\h\hookrightarrow\Cc^{q-var}$ then the upper bound is that of a Weibull distribution with shape parameter $2/q$ (note that for Brownian or smoother paths one can take $q=1$ and hence one obtains a Gaussian tail estimate). 
   Moreover, this non-Gaussian tail behaviour was shown to be sharp in the recent work~\cite{boedihardjo2022lack}, where a non-Gaussian tail-lower bound was provided for an elementary RDE. 
   In light of these facts, one deduces that the law of~$Y$ does not enjoy Gaussian concentration and hence cannot satisfy the $p$-TCI for any $p\geq 1$.

   Another important class of Gaussian functionals with non-Gaussian tail behaviour is provided, in mathematical finance, by rough (stochastic) volatility models~\cite{bayer2016pricing}. 
   These describe the dynamics of an asset price~$S$ via SDEs of the form
   \begin{equation}\label{eq:RVintro}
       dS_t=S_tf(\hat{W}^H_t, t)dB_t,
   \end{equation}
   where $\hat{W}^H$ is an fBm (of Riemann-Liouville type) with $H< \half$ and~$B$ is a standard Brownian motion that is typically correlated with $\hat{W}^H$. 
   Such models have been proposed due to their remarkable consistency with financial time series data (see~\cite{bayer2016pricing, gatheral2018volatility} and references therein) and calibrated volatility models suggest a Hurst parameter~$H$ of order~$0.1$.

   Besides the fact that~$S$ solves an SDE with unbounded (linear) diffusion coefficients, typical choices for the volatility function~$f$ are also unbounded (e.g. exponential as in the rough Bergomi model~\cite{bayer2016pricing}, 
   or polynomial~\cite{jaber2022joint}). 
   It is thus clear that neither the (driftless) log-price $fdB$ nor~$S$ itself fit into the framework of Talagrand's TCIs. 
   Moreover, it is well known that, for a broad class of volatility functions, 
   $p$-th moments of~$S$ for $p>1$ are infinite for $t>0$; 
   see for example~\cite{lions2007correlations} and~\cite{gassiat2019martingale} for the cases $H=\half$ and $H<\half$ respectively.
   
    Motivated by RDEs and rough volatility, our primary goal here is to identify a class of TCIs that is both general enough to include Talagrand's $p$-TCIs and also sufficient to capture non-Gaussian concentration and tail behaviour. In particular, we establish TCIs of the form  
      \begin{equation}
     \label{eq: acTCI}
        \alpha\big(W_c(\mu, \nu)\big)\leq H(\nu\;|\;\mu),
 \end{equation}
 where $\alpha:\R^+\rightarrow\R^+$ is a non-decreasing deviation function vanishing at the origin, 
 $W_c$ the transportation-cost with respect to a measurable cost $c:\mathcal{X}\times\mathcal{X}\rightarrow[0,\infty]$ (typically a concave function of a metric), namely
 $$
 W_{c}(\mu, \nu)=\inf_{\pi\in\Pi(\mu, \nu)}\iint_{\mathcal{X}\times\mathcal{X}}c(x,y)d\pi(x,y),
 $$
 and $\Pi(\mu, \nu)$ the family of all couplings between $\mu, \nu$ (see Section~\ref{Section: TCIs} for precise definitions). 
 A measure~$\mu$ for which~\eqref{eq: acTCI} holds for all measures~$\nu$ on~$\mathcal{X}$ is said to satisfy an $(\alpha, c)$-TCI. Similar types of TCIs have been considered by Gozlan and L\'eonard~\cite{10.1214/ECP.v11-1198,gozlan2007large} 
 (see also the survey paper~\cite{gozlan2010transport}) under slightly different assumptions on $\alpha$ and~$c$ 
 (in particular, $\alpha$ convex and cost~$c$ convex function of a metric, neither satisfied in our examples of interest).

 A natural question upon inspection of the previous examples is whether~\eqref{eq:RVintro} can be treated in a pathwise sense, similar to~\eqref{eq: SDE}. 
 Bayer, Friz, Gassiat, Martin and Stemper~\cite{bayer2020regularity} argued that, while~\eqref{eq:RVintro} falls outside the scope of classical geometric rough paths, a pathwise treatment is possible via Hairer's theory of regularity structures~\cite{hairer2014theory}. 
 In brief, after constructing an appropriate lift of the noise to a random Gaussian model~$\Pi$ (akin to the lift $X\mapsto\mathbf{X}$) and "expanding"~$f$ with respect to the higher-order functionals in~$\Pi$ (in the sense of Hairer's modelled distributions), 
 they obtain a pathwise formulation of~\eqref{eq:RVintro}. 
 Moreover the solution is continuous with respect to an appropriate topology in the space of models.

 This leads to the second objective of the present work, which is to obtain TCIs for Gaussian functionals that arise in the theory of regularity structures. 
 With rough volatility in mind, an initial observation is that both Gaussian rough paths~$\mathbf{X}$ (Theorem~\ref{Thm: TCIRDE}(1)) and models~$\Pi$ (Theorem~\ref{thm:TCIRV1}(2)) satisfy $(\alpha, c)$-TCIs in rough path/model topology. 
 In both these results,~$\alpha$ and~$c$ reflect the smallest order of Wiener chaos to which the components of~$\Pi$ (or $\mathbf{X}$) belong. Furthermore, we show in Theorem~\ref{Thm: TCIRDE}(2) that solutions of Gaussian RDEs satisfy a similar $(\alpha, c)$-TCI where $\alpha$ and~$c$ are related to the smoothness of the driving path and in particular to the exponent $q$ mentioned above. An $(\alpha, c)$-TCI for a class of modelled distributions, on the rough volatility regularity structure, under the assumption that~$f$ grows at most polynomially is provided in Theorem~\ref{thm:TCImodelled}. 
 In this case, $\alpha$ and~$c$ reflect both the growth of~$f$ and the order of the fixed Wiener chaos to which~$\Pi$ belongs. 
 Finally, we obtain in Theorem~\ref{thm:TCIRV1}(3)-(4) $(\alpha, c)$-TCIs for the driftless log-price $fdB$ in the case of polynomially or exponentially growing~$f$.

Our approach for proving TCIs for the aforementioned functionals  relies on two main steps and is summarised as follows: Letting $(E, \h, \gamma)$ be the abstract Wiener space that corresponds to the underlying Gaussian noise, we consider a functional~$\Psi$, defined on~$E$, along with a shifted version 
$\Psi^h(\omega)=\Psi(\omega+h)$ in the direction of a Cameron-Martin space element $h\in\h$.
First, we obtain estimates of the distance between~$\Psi$ and~$\Psi^h$ in the topology of interest (essentially $\h$-continuity estimates).
Then, we use either a generalised contraction principle, Lemma~\ref{Lem: contraction1}, to obtain an $(\alpha, c)$-TCI with appropriate  cost and deviation functions or a generalised Fernique theorem~\cite{friz2010generalized} to obtain a $1$-TCI for a non-negative function of $\Psi$. Moreover, we show that the $(\alpha, c)$-TCIs we consider imply non-Gaussian concentration properties in Proposition~\ref{alphaprop}. 

A similar methodology can be applied to obtain a different type of inequalities for functionals $\Psi: E\to\R^m$.
As explained in Section~\ref{Section:WLSIs}, if $\Psi$ is $\h$-continuously Fr\'echet (or Malliavin) differentiable then it is possible to obtain Weighted Logarithmic Sobolev Inequalities (WLSIs) via contraction 
(see Proposition~\ref{Prop:logSobolev} for the corresponding contraction principle) under some additional assumptions on the $\h$-gradient. $\WLSIs$ and their connections with concentration properties and weighted Poincar\'e inequalities have been explored by Bobkov-Ledoux~\cite{bobkov2009weighted} and further studied by Cattiaux-Guillin-Wu~\cite{cattiaux2011some} 
(see also~\cite{cattiaux2019entropic, kolesnikov2016riemannian, wang2008super}).

The contribution of this work is thus threefold: 
(a) We prove new TCIs for Gaussian functionals arising in rough volatility and in rough path and regularity structure contexts. 
In passing, Lemma~\ref{Lem: contraction1} and Theorem~\ref{Thm: TCIRDE}(2) extend~\cite{riedel2017transportation} to the setting of RDEs driven by Gaussian noise rougher than Brownian motion;
(b) The $(\alpha, c)$-TCIs we consider imply well-known tail estimates (via Corollary~\ref{Cor:Exponential Moments}): 
(i) the TCIs for Gaussian rough paths and models imply the tail upper bounds for random variables on a fixed Wiener chaos  from~\cite{latala2006estimates};
(ii) the TCI for Gaussian RDEs allows us to recover the tail upper bounds from~\cite{cass2013integrability, friz2013integrability} (Corollary~\ref{cor: LatalaCLL});
(c) Apart from rough volatility, we transfer some of our arguments and prove TCIs in the setting of the 2d-Parabolic Anderson Model (2d-PAM) (Theorem~\ref{thm:PAM}), a well-studied singular SPDE that can be solved in the framework of regularity structures. 
This case highlights significant differences from rough volatility due both to the infinite dimensionality of the dynamics and to the essential requirement for renormalisation needed to define the solution map. 

The rest of this article is organised as follows: We introduce $(\alpha, c)$-TCIs along with their consequences and characterisation in Section~\ref{Section: TCIs}. In Section~\ref{Section: RPs}, we present TCIs for Gaussian rough paths and RDEs. Section~\ref{Section: Rough Vol} is devoted to the rough volatility regularity structure. Apart from presenting our results on TCIs, this section also serves as an elementary introduction to some notions and language of the general theory. In Section~\ref{Section:PAM} we present our results on TCIs for the 2d-PAM. Finally, in Section~\ref{Section:WLSIs} we obtain a generalised contraction principle for WLSIs and leverage tools from Malliavin calculus to demonstrate examples of Gaussian functionals that satisfy such inequalities. The proofs of some technical lemmas from Section~\ref{Section: Rough Vol} are collected in Appendix~\ref{Section:App}.

    \section{Transportation-cost inequalities} \label{Section: TCIs} In this section, we introduce a family of transportation-cost inequalities (TCIs) for probability measures on an arbitrary metric space $\mathcal{X}$. In contrast to the majority of the literature, our definition does not require~$\mathcal{X}$ to be Polish. The reason for choosing this degree of generality is that some of our results apply to situations where the underlying space does not necessarily satisfy this property (e.g. the "total" space of modelled distributions~\eqref{eqref: MDspace} on the rough volatility regularity structure). After introducing the necessary notation, we provide some characterisations and consequences for the class of $(\alpha, c)$-TCIs of interest in Section~\ref{subsec:TCIchar}. In Section~\ref{subsec:TCIcon} we prove a generalised contraction principle which is used to obtain several of our main results in the following sections.

Throughout the rest of this work, the lattice notation $\wedge, \vee$ is used to denote the minimum and maximum of real numbers and~$\lesssim$ denotes inequality up to a multiplicative constant. 
The Borel $\sigma$-algebra and space of Borel probability measures on $\mathcal{X}$ are denoted by  $\mathscr{B}(\mathcal{X}), \mathscr{P}(\mathcal{X})$ respectively.
We use the notation $\nu\ll\mu$ to denote absolute continuity of a measure $\nu$ with respect to $\mu$. For $i=1,\dots, n$, the $i$-th marginal of a measure $\pi\in\mathscr{P}(\mathcal{X}^n)$ is denoted by 
$[\pi]_i$ and the $m$-product measure by $\pi^{\otimes m}$. 
For an interval $I\subset\R$ (resp. $I\subset\R^+$) the convex conjugate (resp. monotone convex conjugate) of a convex function $f:I\rightarrow\R$ is defined for 
$s\in I$ by $f^*(s):=\sup_{t\in I}\{st-f(t)\}$ (resp. $f^\star(s):=\sup_{t\in I}\{st-f(t)\}$).

\begin{dfn} Let $(\mathcal{X}, g)$ be a metric space, 
$c:\mathcal{X}\times\mathcal{X}\rightarrow [0,\infty]$ a measurable function 
and $\mu, \nu\in\mathscr{P}(\mathcal{X})$. 
\begin{enumerate}
\item The transportation cost between~$\mu$ and~$\nu$ with respect to the cost function~$c$ reads
$$
W_{c}(\mu, \nu) :=\inf_{\pi\in\Pi(\mu, \nu)}\iint_{\mathcal{X}\times\mathcal{X}}c(x,y)d\pi(x,y),
$$
where $\Pi(\mu,\nu)$ is the collection of couplings between~$\mu$ and $\nu$:
$$
\Pi(\mu,\nu):=\bigg\{\pi\in\mathscr{P}(\mathcal{X}\times\mathcal{X}): [\pi]_1=\mu,\;[\pi]_2=\nu \bigg\}.
 $$
\item The relative entropy of~$\nu$ with respect to~$\mu$ is given by
\begin{equation}
	H(\nu\;|\; \mu) :=
 \left\{
 \begin{array}{ll}
	\displaystyle
	\large \int_{\mathcal{X}}\log\left(\frac{d\nu}{d\mu}\right)d\nu, & \text{if }\nu\ll\mu,\\
 +\infty, & \text{otherwise}.
\end{array}
\right.
\end{equation}
\end{enumerate}
\end{dfn}
\begin{rem} For $p\in[1,\infty]$, a Polish space~$\mathcal{X}$ and a metric~$g$ that induces the topology of~$\mathcal{X}$, $W_{g^p}^{1/p}$ is the $p$-Wasserstein distance between~$\mu$ and~$\nu$.
\end{rem}

\begin{dfn} 
Let $(\mathcal{X}, g)$ be a metric space and  $\mu\in\mathscr{P}(\mathcal{X})$.
\begin{enumerate}
\item Let $c:\mathcal{X}\times\mathcal{X}\rightarrow[0,\infty]$ be a measurable function with $c(x,x)=0$ for all~$x\in\mathcal{X}$ and $\alpha:[0,\infty]\rightarrow[0,\infty]$ be a lower semicontinuous function with $\alpha(0)=0$.
We say that~$\mu$ satisfies the $(\alpha, c)$-TCI (and write $\mu\in\mathscr{T}_{\alpha}(c)$) with cost function~$c$ and deviation function~$\alpha$ if 
for all $\mathscr{P}(\mathcal{X})\ni\nu\ll\mu$,
	\begin{equation}\label{alphap}
	\alpha\bigg(W_c(\mu, \nu)\bigg)\leq H(\nu\;|\; \mu).
	\end{equation}
\item Let $ p\in[1, \infty)$. We say that~$\mu$ satisfies Talagrand's $p$-TCI and write $\mu\in\mathscr{T}_p(C)$ for some $C>0$ if $\mu\in\mathscr{T}_{\alpha}(g^p)$ with $\alpha(t)=Ct^{2/p}$. 
 \end{enumerate}
\end{dfn}

\subsection{Consequences and characterisation}\label{subsec:TCIchar}
\noindent $(\alpha, c)$-TCIs with convex $\alpha$ and $c$ given by a metric (or a convex function thereof), as well as  the larger family of norm-entropy inequalities, were studied in~\cite{10.1214/ECP.v11-1198, gozlan2007large}. 
Here, we are interested in a class of TCIs where~$\alpha$ is piecewise convex and, for most of the applications of interest, 
$c$ is a concave function of a metric. 
At this point, we provide a characterisation for the TCIs of interest and then show some of their consequences in terms of exponential integrability and deviation estimates.



\begin{prop}\label{alphaprop} Let~$\mathcal{X}$ be a Polish space, $\alpha_1,\alpha_2: [0,\infty]\rightarrow [0,\infty]$ be convex, increasing, continuous functions such that $\alpha_1(0)=\alpha_2(0)=0$ and $c:\mathcal{X}\times\mathcal{X}\rightarrow[0,\infty]$ be lower semicontinuous. The following are equivalent:
\begin{enumerate}
\item[(i)] $\mu\in\mathscr{P}(\mathcal{X})$ satisfies $\mathscr{T}_{\alpha}(c)$ with $\alpha=\alpha_1\wedge\alpha_2$.
\item[(ii)] For all $s\geq 0$ and $f, g\in L^1(\mu)$ such that for $\mu^{\otimes 2}$-almost every $(x,y)\in\mathcal{X}^2$,
	\begin{equation}\label{eq:fgc}
	    f(x)+g(y)\leq c(x,y),
	\end{equation}
we have 
$$
\int_{\mathcal{X}} e^{s g} d\mu\leq 
\exp\left\{-s\int_{\mathcal{X}}fd\mu+ \alpha^\star_1\vee \alpha^\star_2(s)\right\}.
$$
\item[(iii)] Let $g\in L^1(\mu)$ such that $P_c(g)(\cdot):=\sup_{x\in\mathcal{X}}\big\{ g(x)-c(x,\cdot)   \}\in L^1(\mu)$. For all $s\geq 0$,
$$
\int_{\mathcal{X}} e^{s g} d\mu\leq
\exp\left\{s\int_{\mathcal{X}}P_c(g)d\mu+ \alpha^\star_1\vee \alpha^\star_2(s)\right\}.
$$
\end{enumerate}
\end{prop}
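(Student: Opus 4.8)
The plan is to run the cycle of implications (i) $\Rightarrow$ (ii) $\Rightarrow$ (iii) $\Rightarrow$ (i). The two workhorses are Kantorovich duality for $W_c$ (available since $\mathcal{X}$ is Polish and $c$ is non-negative and lower semicontinuous), in the form $W_c(\mu,\nu)=\sup_{g}\{\int g\,d\nu-\int P_c(g)\,d\mu\}$ with the supremum over bounded (Lipschitz) $g$, and the Donsker--Varadhan variational formula $\log\int_{\mathcal{X}}e^{\phi}\,d\mu=\sup_{\nu}\{\int\phi\,d\nu-H(\nu\mid\mu)\}$. The bridge between the $\wedge$ on the primal side and the $\vee$ on the dual side is the elementary identity $(\alpha_1\wedge\alpha_2)^\star=\alpha_1^\star\vee\alpha_2^\star$ for the monotone convex conjugate, together with the trivial inequality $\alpha^\star(s)\ge sW-\alpha(W)$ valid for all $s,W\ge 0$. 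Throughout I write $f\oplus g$ for $(x,y)\mapsto f(x)+g(y)$.

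For (i) $\Rightarrow$ (ii): fix $s\ge 0$ and $f,g\in L^1(\mu)$ with $f\oplus g\le c$ $\mu^{\otimes 2}$-a.e. Replacing $g$ by $g\wedge N$ (still admissible, since $c\ge 0$) and letting $N\uparrow\infty$ by monotone convergence at the end, I may assume $Z:=\int e^{sg}\,d\mu<\infty$; tilting, $d\nu:=Z^{-1}e^{sg}\,d\mu\ll\mu$ satisfies $H(\nu\mid\mu)=s\int g\,d\nu-\log Z$, while $f\oplus g\le c$ gives $\int f\,d\mu+\int g\,d\nu\le W_c(\mu,\nu)$. Feeding this into the TCI $\alpha(W_c(\mu,\nu))\le H(\nu\mid\mu)$ and using $\alpha^\star(s)\ge sW_c(\mu,\nu)-\alpha(W_c(\mu,\nu))$ yields $\log Z\le -s\int f\,d\mu+\alpha^\star(s)=-s\int f\,d\mu+(\alpha_1^\star\vee\alpha_2^\star)(s)$, which is (ii). The one genuinely technical point is the passage from ``$f\oplus g\le c$ $\mu^{\otimes 2}$-a.e.'' to ``$\int f\,d\mu+\int g\,d\nu\le W_c(\mu,\nu)$''; as in Gozlan--L\'eonard \cite{gozlan2007large} one first reduces to bounded continuous $f,g$ (for which the inequality holds pointwise, hence $\pi$-a.e. for every coupling $\pi$) and then removes this restriction by approximation.

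(ii) $\Rightarrow$ (iii) is immediate: (iii) is the instance $f:=-P_c(g)$ of (ii). Indeed $P_c(g)\in L^1(\mu)$ is assumed (so $f\in L^1(\mu)$), the definition of $P_c$ forces $f(x)+g(y)\le c(x,y)$ for all $x,y$ (using symmetry of the cost, as in the applications of interest), and $-\int f\,d\mu=\int P_c(g)\,d\mu$ turns (ii) into (iii).

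For (iii) $\Rightarrow$ (i): fix $\nu\ll\mu$, put $W:=W_c(\mu,\nu)$ and $H:=H(\nu\mid\mu)$, and take any admissible bounded $g$. Donsker--Varadhan applied to $\phi=sg$ together with (iii) gives $s\int g\,d\nu-H\le\log\int e^{sg}\,d\mu\le s\int P_c(g)\,d\mu+\alpha^\star(s)$; rearranging and taking the supremum over $g$, Kantorovich duality produces
\[
sW-(\alpha_1^\star\vee\alpha_2^\star)(s)\le H\qquad\text{for every }s\ge 0,
\]
equivalently $\min\{sW-\alpha_1^\star(s),\,sW-\alpha_2^\star(s)\}\le H$ for every $s\ge 0$. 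It remains to upgrade this family of pointwise-in-$s$ inequalities to $\alpha_1(W)\wedge\alpha_2(W)\le H$, and \emph{this is the main obstacle of the proof}: since $\alpha_1\wedge\alpha_2$ need not be convex one cannot simply maximise over $s$ and invoke biconjugation, as that would only deliver the strictly weaker bound with the convex envelope $(\alpha_1\wedge\alpha_2)^{**}$. The argument must exploit the finer structure of the conjugates: each map $s\mapsto sW-\alpha_i^\star(s)$ is concave with value $-H\le 0$ at $s=0$, so its super-level set $\{\,\cdot\,>H\}$ is an interval avoiding the origin, and if both were non-empty they would be disjoint (any common point would violate the displayed inequality); to rule that out one tests the displayed inequality at the scale $s_i$ with $W\in\partial\alpha_i^\star(s_i)$ (where $\sup_s\{sW-\alpha_i^\star(s)\}=\alpha_i(W)$ is attained) and uses that $\alpha_1^\star,\alpha_2^\star$ are convex and nondecreasing to force the active branch of $\alpha_1^\star\vee\alpha_2^\star$ at $s_i$ to be $\alpha_i^\star$ itself, so that the inequality at $s=s_i$ already reads $\alpha_i(W)\le H$. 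Once this convex-analytic step is in place (i) follows, and the remaining bookkeeping---measurability, integrability of the tilted integrals, and the reduction to bounded continuous test functions in the duality---is routine.
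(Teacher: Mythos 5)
Your implications $(i)\Rightarrow(ii)\Rightarrow(iii)$ are correct, and your proof of $(i)\Rightarrow(ii)$ by exponential tilting ($d\nu = Z^{-1}e^{sg}d\mu$, compute $H(\nu\mid\mu)=s\int g\,d\nu-\log Z$, then combine the coupling bound $\int f\,d\mu+\int g\,d\nu\le W_c$ with the TCI and Young's inequality) is actually \emph{cleaner} than the paper's, which goes through Kantorovich duality, a case split at the crossover point $t^*$ of $\alpha_1$ and $\alpha_2$, biconjugation on each branch, and finally Donsker--Varadhan. Your one-line derivation avoids the case analysis entirely by using $(\alpha_1\wedge\alpha_2)^\star=\alpha_1^\star\vee\alpha_2^\star$ up front. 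The $(ii)\Rightarrow(iii)$ step is the same as the paper's.

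However, your proposed resolution of the $(iii)\Rightarrow(i)$ step is \textbf{wrong}, and this is a genuine gap. You claim that if $s_i$ maximises $s\mapsto sW-\alpha_i^\star(s)$ (so $s_iW-\alpha_i^\star(s_i)=\alpha_i(W)$), then the active branch of $\alpha_1^\star\vee\alpha_2^\star$ at $s_i$ is $\alpha_i^\star$, i.e.\ $\alpha_i^\star(s_i)\ge\alpha_j^\star(s_i)$. This fails: take $\alpha_1(t)=2t$, $\alpha_2(t)=t^2$, so $\alpha_1^\star(s)=0$ for $s\in[0,2]$ and $+\infty$ otherwise, and $\alpha_2^\star(s)=s^2/4$. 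At $W=3$ one has $\alpha_1(3)=6<9=\alpha_2(3)$, so the active branch of $\alpha=\alpha_1\wedge\alpha_2$ is $\alpha_1$; the maximiser of $s\mapsto 3s-\alpha_1^\star(s)$ on $[0,\infty)$ is $s_1=2$; but $\alpha_2^\star(2)=1>0=\alpha_1^\star(2)$, so the active branch of $\alpha_1^\star\vee\alpha_2^\star$ at $s_1$ is $\alpha_2^\star$, and the pointwise inequality at $s=2$ only reads $\min\{6,5\}=5\le H$. Worse, $\sup_{s\ge0}\{3s-(\alpha_1^\star\vee\alpha_2^\star)(s)\}=(\alpha_1\wedge\alpha_2)^{\star\star}(3)=5<6=\alpha(3)$, so \emph{no} choice of $s$ can recover $\alpha(W)\le H$ from the displayed family of inequalities: they only encode the convex envelope $\alpha^{\star\star}$, exactly as you suspected. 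The subgradient/``active branch'' heuristic does not bridge this gap.

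For what it is worth, the paper's own argument suffers from the same lacuna: the case analysis shows $(i)\Longrightarrow$ [Case-$1$/$2$ inequalities] $\Longrightarrow$ [the $\vee$-inequality] $\Longleftrightarrow(ii)$, but the step from the $\vee$-inequality back to the case-wise inequalities (needed for $(ii)\Rightarrow(i)$) is never justified and is in general false, since $(ii)$ is equivalent to $\mu\in\mathscr{T}_{\alpha^{\star\star}}(c)$ and $\alpha^{\star\star}=(\alpha_1\wedge\alpha_2)^{\star\star}$ can be strictly below $\alpha_1\wedge\alpha_2$ near the crossover. So your honest flagging of the obstacle is more careful than the paper; but the proposed convex-analytic repair does not work and must be replaced (either by restating the proposition with $\alpha^{\star\star}$ in $(i)$, or by a genuinely different argument if the equivalence with the non-convex $\alpha$ is to be retained).
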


\begin{proof} $(i)\iff (ii)$ Let $\nu\in\mathscr{P}(\mathcal{X}), \nu\ll\mu$ and denote by $\tilde{\alpha}$ the extension of $\alpha$ to $\R$ by setting $\tilde{\alpha}=0$ on $(-\infty, 0)$. By Kantorovich duality~\cite[Theorem 2.2]{gozlan2010transport},
$$
W_c(\mu, \nu)=\sup\bigg\{\int_{\mathcal{X}}fd\mu+\int_{\mathcal{X}}gd\nu : f\in L^1(\mu), g\in L^1(\nu),\;f(x)+g(y)\leq c(x,y)\;\;\mu^{\otimes 2}\text{-a.e. on}\; \mathcal{X}^2     \bigg\}.
$$ 
By continuity and monotonicity there exists, modulo re-labelling, $t^*\in[0,\infty]$ such that the set $\{a_1\leq \alpha_2\}$ coincides with $[0, t^*]$.
Since $\tilde{\alpha}$ is non-decreasing, $\mathscr{T}_\alpha(c)$ is equivalent to
\begin{equation}
\tilde{\alpha}\bigg(\int_{\mathcal{X}}fd\mu+\int_{\mathcal{X}}gd\nu  \bigg)\leq H(\nu\;|\;\mu).
\end{equation}
 for all such test functions $f, g$.
 
\noindent \textit{Case} $1:$ Assume that $f,g, \nu$ are such that $\int_{\mathcal{X}}fd\mu+\int_{\mathcal{X}}gd\nu \leq t^*$. Then 
$$\tilde{\alpha}\bigg(\int_{\mathcal{X}}fd\mu+\int_{\mathcal{X}}gd\nu\bigg)=\tilde{\alpha}_1\bigg( \int_{\mathcal{X}}fd\mu+\int_{\mathcal{X}}gd\nu\bigg)$$
and $\tilde{\alpha}_1:\R\rightarrow\R$ is continuous and convex. 
By properties of convex-conjugate functions,  $$
\tilde{\alpha}_1(t)=\tilde{\alpha}_1^{**}(t)=\sup_{s\in\R}\big\{st-\tilde{\alpha}^*_1(s)\big\}
$$
holds for all $t\in\R$, and thus, for all $s\in\R$,
$$s\int_{\mathcal{X}}gd\nu-H(\nu\;|\;\mu)\leq- s\int_{\mathcal{X}}fd\mu+\tilde{\alpha}^*_1(s).$$
\textit{Case} $2:$ $f,g, \nu$ are such that $\int_{\mathcal{X}}fd\mu+\int_{\mathcal{X}}gd\nu \geq t^*$. 
Similarly to Case~1, we obtain 
$$s\int_{\mathcal{X}}gd\nu-H(\nu\;|\;\mu)\leq- s\int_{\mathcal{X}}fd\mu+\tilde{\alpha}^*_2(s).$$
Hence for all $f,g, \nu$ and all $s\in\R$ we have 
$$ s \int_{\mathcal{X}}gd\nu-H(\nu\;|\;\mu)\leq- s\int_{\mathcal{X}}fd\mu+\tilde{\alpha}^*_1(s)\vee \tilde{\alpha}^*_2(s).$$
Optimising over~$\nu$ and noting that $H^*(\cdot|\mu)(g)=\log\int e^gd\mu$ and that for $i=1, 2, s\geq 0$ $\tilde{\alpha}^*_i(s)=\alpha_i^\star(s)$ the conclusion follows.\\
$(ii)\iff (iii)$. Assume~(ii); notice that $h=-P_cg$ is the smallest function  satisfying
$g(x)+h(y)\leq c(x,y)$. The inequality thus follows by applying $(i)$ to $g$ and $f=-P_c(g)$.
The converse follows from the fact that for all~$f$ satisfying~\eqref{eq:fgc}
for some fixed~$g$,  $P_cg\leq -f$.
\end{proof}

\begin{rem}
The previous proposition generalises the characterisation of convex TCIs given in Theorem 3.2. of~\cite{gozlan2010transport} which is obtained by setting $\alpha_1=\alpha_2$.
\end{rem}

In the case where $c=d^p$, for some $p\leq 1$, $\mathscr{T}_{\alpha}(c)$ implies the following  exponential integrability properties.

\begin{cor}\label{Cor:Exponential Moments} 
Let $(\mathcal{X}, d)$ be a Polish space, 
$x_0\in\mathcal{X}$, $\mu\in\mathscr{P}(\mathcal{X})$ such that $d(x_0,\cdot)\in L^1(\mu)$.
If $\mu\in\mathscr{T}_{\alpha}(d^p)$ for some $p\in(0, 1]$ and $\alpha=\alpha_1\wedge\alpha_2$ as in Proposition~\ref{alphaprop}, then
\begin{enumerate}
\item[(i)]
for all $s\geq 0$,
$\int_{\mathcal{X}}\exp\left\{s d^p(x_0, x)\right\}d\mu(x)$ is finite;
\item[(ii)] if for some $t_0\geq 0, C>0$ and all $t\in (0, t_0)$ we have  $\alpha(t)\geq C t^2$, then there exists $\lambda_0$ such that for all $\lambda<\lambda_0$,
$$\int_{\mathcal{X}}\exp\left\{\frac{\lambda^2}{2}d^{2p}(x_0, x)\right\}d\mu(x)<\infty.$$
\end{enumerate}
\end{cor}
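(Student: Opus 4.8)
The plan is to derive both integrability statements from part~(iii) of Proposition~\ref{alphaprop}, choosing the function $g$ so that the infimal-convolution $P_c(g)$ becomes controllable. For part~(i), fix $s \geq 0$ and set $g(x) := s\, d^p(x_0,x)$; note $g \in L^1(\mu)$ by hypothesis. The key observation is that the cost $c = d^p$ with $p \in (0,1]$ is itself a metric (since $t \mapsto t^p$ is concave and vanishes at $0$), so that $P_c(g)(y) = \sup_{x}\{ s\,d^p(x_0,x) - d^p(x,y)\} \leq s\, d^p(x_0, y)$ when $s \leq 1$ by the triangle inequality for $d^p$; for general $s$ one instead writes $d^p(x_0,x) \leq d^p(x_0,y) + d^p(y,x)$ only after rescaling, so more carefully I would apply (iii) to $g(x) = s\, d^p(x_0,x)$ and use $P_c(g)(y) \leq \esssup_{x}\{ s\, d^p(x_0,x) - d^p(x,y) \}$, bounding $d^p(x_0,x) \le (d(x_0,y)+d(y,x))^p \le d^p(x_0,y) + d^p(y,x)$, which gives $P_c(g)(y) \le s\,d^p(x_0,y) + \sup_{r\ge 0}\{ s r - r \}$ — finite only for $s \le 1$. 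So the clean route is: first prove finiteness for all $s \in (0,1]$ directly from (iii), then bootstrap to all $s \ge 0$ using that $\mathscr{T}_\alpha(d^p)$ is unchanged under scaling the cost, or more simply observe that $d^p(x_0,\cdot)$ has all exponential moments once it has exponential moments of small order \emph{and} the left tail is trivially bounded — here the real content is that $\alpha = \alpha_1 \wedge \alpha_2$ with both $\alpha_i$ increasing to $+\infty$ forces $\alpha^\star_1 \vee \alpha^\star_2(s) < \infty$ for every $s \ge 0$, which is exactly what makes the right-hand side of (iii) finite for all $s$.

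More precisely, for part~(i) I would argue: since $\alpha_i$ is convex, increasing, continuous with $\alpha_i(0)=0$ and (being a valid deviation function componentwise) $\alpha_i(t) \to \infty$, its monotone conjugate $\alpha_i^\star(s) = \sup_{t \ge 0}\{st - \alpha_i(t)\}$ is finite for every $s \ge 0$. Apply Proposition~\ref{alphaprop}(iii) with $g = s\, d^p(x_0,\cdot)$; one checks $P_c(g) \in L^1(\mu)$ using $P_c(g)(y) \leq s\, d^p(x_0,y) + c_s$ for a constant $c_s$ depending on $s$ (from the concavity estimate above, valid because $t \mapsto st - t$ is bounded above for... — here I would instead use $d^p(x,y)\geq 0$ trivially to get $P_c(g)(y) \le s\,d^p(x_0,y) \cdot$ nothing, so the honest bound is $P_c(g)(y) \le s\sup_x d^p(x_0,x)$ when that is finite, otherwise one genuinely needs the triangle inequality). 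The cleanest version: $s\,d^p(x_0,x) - d^p(x,y) \le s\,d^p(x_0,y) + s\,d^p(y,x) - d^p(x,y) \le s\,d^p(x_0,y) + \sup_{r \ge 0}(s r - r^1)$ — I will restrict first to $s \in (0,1]$ where this supremum is $0$, conclude $\int e^{s\,d^p(x_0,x)}d\mu \le \exp\{ s\int P_c(g)\,d\mu + \alpha_1^\star \vee \alpha_2^\star(s)\} \le \exp\{ s\int d^p(x_0,\cdot)\,d\mu + \alpha_1^\star\vee\alpha_2^\star(s)\} < \infty$, and then promote to all $s \ge 0$: having finiteness for one $s_0 > 0$, we can re-run (iii) with $g = s\,d^p(x_0,\cdot)$ for arbitrary $s$, the new $P_c(g)$ being bounded by $s\,d^p(x_0,\cdot) + (\text{const})$ which now lies in $L^1(\mu)$ because small exponential moments give finite $p$-th (indeed all) moments.

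For part~(ii), the extra hypothesis $\alpha(t) \ge C t^2$ on $(0,t_0)$ lets us control the conjugate near the origin: $\alpha^\star_i(s) = \sup_{t}\{st - \alpha_i(t)\}$, and splitting the supremum over $t \in (0,t_0)$ and $t \ge t_0$, on the first piece $st - \alpha_i(t) \le st - Ct^2 \le s^2/(4C)$, while on the second piece $st - \alpha_i(t) \le st - \alpha_i(t_0) + \ldots$ grows only linearly, so $\alpha^\star_1 \vee \alpha^\star_2(s) \le \frac{s^2}{4C} + O(s)$ as $s \to \infty$ — actually for the quadratic \emph{exponential} moment we want $\alpha^\star(s) \lesssim s^2$ for \emph{large} $s$, which requires the quadratic lower bound to persist; since it is only assumed near $0$ I would instead aim for the Gaussian-tail conclusion via the standard trick: $\int e^{\frac{\lambda^2}{2} d^{2p}(x_0,x)}\,d\mu = \int \sup_{s}e^{s\,d^p(x_0,x) - s^2/(2\lambda^2)}\,d\mu$, bound the integrand pointwise, and plug (iii)'s estimate — the exponent becomes $\sup_s\{ s\int P_c(g)\,d\mu + \alpha_1^\star\vee\alpha_2^\star(s) - s^2/(2\lambda^2)\}$, finite provided $\lambda^2 < 2 \cdot \liminf_{s\to\infty} s^2/(2\alpha^\star(s))$; the quadratic-near-zero bound on $\alpha$ is the dual statement to $\alpha^\star$ being quadratic-near-zero, hence one works with $\lambda$ small so only small $s$ matter after optimizing, giving the threshold $\lambda_0$. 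The main obstacle throughout is bookkeeping the concave-cost triangle inequality so that $P_c(g) \in L^1(\mu)$ rigorously, and matching the regime (small $s$ / large $s$) of the $\alpha$-bound to the regime needed for each integrability claim.
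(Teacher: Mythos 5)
The plan to route through Proposition~\ref{alphaprop}(iii) is the right one, but the execution has two genuine gaps.

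For (i), you set $g = s\,d^p(x_0,\cdot)$ and then observe that $P_c(g)$ becomes problematic for $s>1$. This is correct, and worse than you suggest: for $s>1$ and unbounded $\mathcal{X}$, $P_c(g)$ is $+\infty$ \emph{pointwise} (take $x$ far from $x_0$: $s\,d^p(x_0,x)-d^p(x,y)\ge (s-1)d^p(x,y)-s\,d^p(x_0,y)\to+\infty$ using subadditivity of $d^p$). The proposed bootstrap cannot repair this — the obstruction is not whether $P_c(g)\in L^1(\mu)$ but that it is infinite everywhere, and no amount of additional integrability of $\mu$ changes a pointwise $+\infty$. The fix is simpler than what you attempted and removes the case split entirely: do not absorb $s$ into $g$. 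Proposition~\ref{alphaprop}(iii) already carries the free parameter $s$ in the integrand $e^{sg}$. Take $g:=d^p(x_0,\cdot)$ with no prefactor. The elementary inequality $|a^p-b^p|\le|a-b|^p$ (valid for $p\le1$) together with the triangle inequality gives $g(x)-d^p(x,y)=d^p(x,x_0)-d^p(x,y)\le d^p(x_0,y)$, whence $P_c(g)(y)\le d^p(x_0,y)\in L^1(\mu)$ by hypothesis. Then (iii) immediately yields $\int e^{s\,d^p(x_0,\cdot)}d\mu\le\exp\{s\langle d^p(x_0,\cdot)\rangle+\alpha_1^\star\vee\alpha_2^\star(s)\}$ for all $s\ge0$ in one step.

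For (ii), the identity $e^{\lambda^2u^2/2}=\sup_s e^{su-s^2/(2\lambda^2)}$ is true but cannot be used in the direction you need: one has $\int\sup_s(\cdot)\,d\mu\ge\sup_s\int(\cdot)\,d\mu$, so bounding the inner integral via (iii) and then moving the supremum outside produces a lower bound for the left-hand side, not an upper bound. The paper's remedy is to replace $\sup_s$ by the Gaussian integral identity $e^{\lambda^2u^2/2}=\int_{\R}e^{\lambda s u}\,d\gamma(s)$ for $\gamma$ standard normal, which commutes with $\int(\cdot)\,d\mu$ by Tonelli. After applying part (i) in the inner integral, the super-quadratic lower bound $\alpha(t)\ge Ct^2$ near $0$ dualizes to a sub-quadratic upper bound on $\alpha_1^\star\vee\alpha_2^\star$ away from the origin, so the $s$-integrand is dominated by $\exp\{(\langle d^p(x_0,\cdot)\rangle+C)\lambda^2s^2\}$, which is $\gamma$-integrable for $\lambda$ small; this produces the threshold $\lambda_0$.

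(A side remark: your assertion that $\alpha_i(t)\to\infty$ forces $\alpha_i^\star(s)<\infty$ for every $s\ge0$ is not correct in general — it requires superlinear growth of $\alpha_i$, not mere divergence. The paper elides this point as well, and in the applications where $\alpha$ is linear at infinity the conclusions drawn are indeed restricted to small exponential orders.)
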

\begin{proof}\ 
\begin{enumerate}
\item[(i)]
Let $f=d^p(x_0, \cdot)$, $\langle d^p(x_0, \cdot)\rangle:=\int_{\mathcal{X}}d^p(x_0, y)d\mu(y)$. From the elementary inequality $|x^p-y^p|\leq |x-y|^p$ valid for all $x,y\geq 0$, along with the triangle inequality, then
$$
f(x) - d^p(x,y) = d^p(x, x_0) - d^p(x,y) \leq d^p(x_0, y),
\qquad\text{for all }x\in\mathcal{X}.
$$
Taking the supremum over~$\mathcal{X}$, it follows that $P_cf(y)\leq d^p(x_0, y)$, hence by assumption $P_cf\in L^1(\mu)$. In view of Proposition~\ref{alphaprop}(ii) we obtain 
$$
\int_{\mathcal{X}}\exp\Big\{s d^p(x_0, x)\Big\}d\mu(x)
\leq \exp\Big\{s\langle d^p(x_0, \cdot)\rangle+ \alpha^\star_1\vee \alpha^\star_2(s)\Big\}, $$
and the proof is complete.
\item[(ii)] We apply an argument from~\cite[Page~2704]{djellout2004transportation}.
 Let $\gamma\in\mathscr{P}(\R)$ be a standard Gaussian measure. An application of Fubini's theorem then yields
 \begin{equation}
 \begin{aligned}
 \int_{\mathcal{X}}\exp\left\{\frac{\lambda^2}{2}d^{2p}(x_0, x)\right\}d\mu(x)
 & = \int_{\mathcal{X}}\int_{\mathcal{\R}}\exp\Big\{\lambda s d^{p}(x_0, x)\Big\}d\gamma(s)d\mu(x)\\
 & \leq \int_{\mathcal{\R}}\int_{\mathcal{X}}\exp\Big\{|\lambda s| d^{p}(x_0, x)\Big\}d\mu(x)d\gamma(s)\\
 & \leq    \int_{\mathcal{\R}}     \exp\Big\{|\lambda s|\langle d^p(x_0, \cdot)\rangle+ \alpha^\star_1\vee \alpha^\star_2\big(|\lambda s|\big) \Big\}d\gamma(s),
 \end{aligned}
 \end{equation}
 where the last inequality follows from $(i)$. From the assumptions on $\alpha$ 
 then both~$\alpha_1$ and~$\alpha _2$ are super-quadratic near the origin,
hence $\alpha^*_1, \alpha^* _2$ are  sub-quadratic away from the origin and in particular there exists $C, s_0>0$ such that $\alpha^*_1(s)\vee\alpha_2^*(s)\leq C s^2$ for all $s>s_0$.
The integral in the last display is then clearly finite for $|s| < (1\vee s_0)/|\lambda|$.
As for $| s|\geq    (1\vee s_0)/|\lambda|$ we have  $|\lambda s|\leq \lambda^2 s^2$, 
 $\alpha^\star_1\vee \alpha^\star_2\big(|\lambda s|\big)\leq C\lambda^2 s^2$, 
 hence the integrand is upper bounded by 
$\exp\{(\langle d^p(x_0, \cdot)\rangle + C)\lambda^2 s^2\}$.
Since~$\gamma$ is a Gaussian measure, the latter is integrable, provided that~$\lambda$ is small enough (and in fact for all $|\lambda|<1/\sqrt{2(\langle d^p(x_0, \cdot)\rangle +C})$.
\end{enumerate}
\end{proof}

Apart from exponential integrability, $(\alpha, c)$-TCIs are useful to obtain deviation estimates from the Law of Large Numbers (LLN). In particular, let $\{X_n; n\in\N\}$ be an independent and identically distributed sample from 
a measure $\mu\in\mathscr{P}(X)$ and 
\begin{equation}\label{eq:Ln}
L_n := \frac{1}{n}\sum_{k=1}^{n}\delta_{X_i}\in\mathscr{P}(\mathcal{X})
\end{equation}
the $n$-sample empirical measure. A consequence of Sanov's theorem in large deviations ( see e.g. \cite{dupuis2011weak}, Theorem 2.2.1)  is  that, for all $r>0$ and $d$ a metric for the topology of weak convergence in $\mathscr{P}(\mathcal{X})$,
\begin{equation}
\limsup_{n\to\infty}\log \pr\Big[ d(L_n, \mu)\geq r    \Big]
\leq -\inf_{\{\nu : d(\nu, \mu)\geq r\}} H(\nu\;|\;\mu).
\end{equation}
In other words, $H(\cdot\;|\;\mu)$ provides an asymptotic, exponential decay rate for the probability of being "far" from the LLN limit $\mu$. In many practical applications, the pre-asymptotic terms that are ignored from large deviation estimates play an important role. The following proposition and Corollary~\ref{cor:deviation}  show that it is possible to get non-asymptotic (i.e. for all $n$ as opposed to "large" $n$) deviation estimates under the assumption that $\mu\in\mathscr{T}_{\alpha}(c)$.

\begin{prop}\label{prop:deviation}(Deviation estimates)
Let~$\mathcal{X}$ be a Polish space, $\mu\in\mathscr{P}(\mathcal{X})$ and $L_n$ as in~\eqref{eq:Ln}. The following are equivalent:
\begin{enumerate}
\item[(i)] $\mu\in\mathscr{T}_{\alpha}(c)$ with $\alpha=\alpha_1\wedge \alpha_2$ as in Proposition~\ref{alphaprop}.
\item[(ii)] For all $f,g\in L^1(\mu)$ such that $f(x)+g(y)\leq c(x,y)$ for $\mu^{\otimes 2}$-almost every $(x,y)\in\mathcal{X}^2$, then, 
for all $n\in\N$ and $r>0$,
$$ \frac{1}{n}\log \pr\left[ \int_{\mathcal{X}}fdL_n+\int_{\mathcal{X}}gd\mu\geq r   \right]
 = \frac{1}{n}\log \pr\left[ \frac{1}{n}\sum_{k=1}^{n}f(X_k)+\int_{\mathcal{X}}gd\mu\geq r   \right]
 \leq -\alpha(r).
$$
\end{enumerate}
\end{prop}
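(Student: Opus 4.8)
The plan is to reduce the equivalence to the characterisation already proved in Proposition~\ref{alphaprop}, using the exponential Chebyshev (Markov) inequality together with the i.i.d.\ structure of the sample. Fix admissible test functions $f,g\in L^1(\mu)$, i.e.\ $f(x)+g(y)\leq c(x,y)$ for $\mu^{\otimes 2}$-a.e.\ $(x,y)$. Since the $X_k$ are i.i.d.\ with law $\mu$, for every $s\geq 0$ and every $n$,
\begin{equation*}
\ex\Big[\exp\Big\{s\sum_{k=1}^{n}f(X_k)\Big\}\Big]=\Big(\int_{\mathcal{X}}e^{sf}d\mu\Big)^{n}.
\end{equation*}
The first step is therefore to record that $f$ satisfies the right shape of bound to feed into Proposition~\ref{alphaprop}(ii) with the roles of $f$ and $g$ swapped: applying that proposition with the pair $(g,f)$ (which is admissible by symmetry of the constraint~\eqref{eq:fgc}) gives, for all $s\geq 0$,
\begin{equation*}
\int_{\mathcal{X}}e^{sf}d\mu\leq\exp\Big\{-s\int_{\mathcal{X}}g\,d\mu+\alpha_1^\star\vee\alpha_2^\star(s)\Big\}.
\end{equation*}

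For the direction (i)$\Rightarrow$(ii), I would now combine the two displays with the exponential Markov inequality: for $s\geq 0$,
\begin{equation*}
\pr\Big[\frac{1}{n}\sum_{k=1}^{n}f(X_k)+\int_{\mathcal{X}}g\,d\mu\geq r\Big]
\leq e^{-snr}\,e^{sn\int g\,d\mu}\Big(\int e^{sf}d\mu\Big)^{n}
\leq \exp\Big\{-n\big(sr-\alpha_1^\star\vee\alpha_2^\star(s)\big)\Big\}.
\end{equation*}
Taking $\frac1n\log$, it remains to optimise over $s\geq 0$, i.e.\ to show $\sup_{s\geq 0}\{sr-\alpha_1^\star\vee\alpha_2^\star(s)\}\geq\alpha(r)$ with $\alpha=\alpha_1\wedge\alpha_2$. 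This is the one genuinely computational point: since $\alpha_1^\star\vee\alpha_2^\star=(\alpha_1^\star)\vee(\alpha_2^\star)$ and the monotone conjugate is order-reversing, one has $\big(\alpha_1^\star\vee\alpha_2^\star\big)^\star=\alpha_1^{\star\star}\wedge\alpha_2^{\star\star}=\alpha_1\wedge\alpha_2=\alpha$ (using that $\alpha_1,\alpha_2$ are convex, increasing, continuous with value $0$ at $0$, so $\alpha_i^{\star\star}=\alpha_i$), and $\sup_{s\geq 0}\{sr-h(s)\}=h^\star(r)$ by definition of the monotone conjugate. Hence the right-hand side is exactly $-\alpha(r)$, as claimed. (The displayed equality of the two probabilities in (ii) is immediate from $\int f\,dL_n=\frac1n\sum_k f(X_k)$.)

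For the converse (ii)$\Rightarrow$(i), the plan is to run the argument of Proposition~\ref{alphaprop} backwards, via a large-deviations / Laplace-principle identification of the rate. Concretely, one lets $n\to\infty$ in (ii); by Cramér's theorem for the i.i.d.\ real sums $\frac1n\sum_k f(X_k)$ (or, more robustly, by Sanov's theorem applied to $L_n$ as in the discussion preceding the proposition together with the contraction principle along $\nu\mapsto\int f\,d\nu$), the left-hand side converges to $-\inf\{H(\nu\mid\mu):\int f\,d\nu+\int g\,d\mu\geq r\}$, so (ii) forces
\begin{equation*}
\inf\Big\{H(\nu\mid\mu):\int_{\mathcal{X}}f\,d\nu+\int_{\mathcal{X}}g\,d\mu\geq r\Big\}\geq\alpha(r)
\end{equation*}
for every admissible $(f,g)$ and every $r>0$. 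Taking the supremum over admissible $(f,g)$ inside, Kantorovich duality (as quoted from~\cite[Theorem~2.2]{gozlan2010transport} in the proof of Proposition~\ref{alphaprop}) turns $\sup_{f,g}\{\int f\,d\nu+\int g\,d\mu\}$ into $W_c(\mu,\nu)$, and a short swap of the infimum and supremum (justified since $\alpha$ is nondecreasing and lower semicontinuous, so the sublevel condition is closed) yields $\alpha(W_c(\mu,\nu))\leq H(\nu\mid\mu)$ for all $\nu\ll\mu$, which is precisely $\mu\in\mathscr{T}_\alpha(c)$. I expect the main obstacle to be this last step: making the interchange of $\inf_\nu$ and $\sup_{f,g}$ rigorous and checking the measurability/integrability hypotheses needed to invoke Kantorovich duality for a general (possibly non-metric-derived, $[0,\infty]$-valued) cost $c$ on a Polish space — though, as in Proposition~\ref{alphaprop}, one may alternatively route (ii)$\Rightarrow$(i) through the equivalence (i)$\iff$(ii) of that proposition by first deducing the exponential-moment bound of Proposition~\ref{alphaprop}(ii) directly from (ii) (letting $n=1$ does not suffice, so one really does need the large-$n$ limit or a convexity lemma here), which localises the difficulty to the same duality point.
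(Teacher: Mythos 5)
The direction (ii)$\Rightarrow$(i) is essentially correct and agrees with the paper's proof: let $n\to\infty$, identify the limit via Cram\'er/Sanov with $-\inf\{H(\nu\,|\,\mu):\int f\,d\nu+\int g\,d\mu\geq r\}$, deduce $\alpha(t)\leq H(\nu\,|\,\mu)$ whenever $t=\int f\,d\nu+\int g\,d\mu$, and optimise over admissible $(f,g)$ using Kantorovich duality together with monotonicity and lower semicontinuity of $\alpha$ (no convexity is needed for this step).

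The forward direction (i)$\Rightarrow$(ii), however, contains a genuine gap at exactly the step you flag as the ``one genuinely computational point''. You claim
\[
\big(\alpha_1^\star\vee\alpha_2^\star\big)^\star \;=\; \alpha_1^{\star\star}\wedge\alpha_2^{\star\star}\;=\;\alpha_1\wedge\alpha_2\;=\;\alpha .
\]
This is false. The identity $(\alpha_1\wedge\alpha_2)^\star=\alpha_1^\star\vee\alpha_2^\star$ always holds, but the conjugate of a maximum is \emph{not} the minimum of the conjugates; instead
\[
\big(\alpha_1^\star\vee\alpha_2^\star\big)^\star \;=\; \big((\alpha_1\wedge\alpha_2)^\star\big)^\star \;=\; (\alpha_1\wedge\alpha_2)^{\star\star},
\]
which is the lower convex envelope of $\alpha_1\wedge\alpha_2$ and is strictly smaller than $\alpha_1\wedge\alpha_2$ precisely when the latter is non-convex --- the only case of interest in this paper. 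Concretely, for $\alpha_1(t)=t$ and $\alpha_2(t)=t^2$ one has $\alpha_1^\star\vee\alpha_2^\star(s)=s^2/4$ on $[0,1]$ and $+\infty$ for $s>1$, hence $(\alpha_1^\star\vee\alpha_2^\star)^\star(3/4)=3/4-1/4=1/2$, whereas $\alpha(3/4)=9/16>1/2$. Consequently your Markov bound, routed through Proposition~\ref{alphaprop}(ii), only yields $\frac1n\log \pr[\cdots]\leq -(\alpha_1^\star\vee\alpha_2^\star)^\star(r)$, which is strictly weaker than the claimed $\leq-\alpha(r)$ on the region where $\alpha$ exceeds its convex envelope. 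The fix, and the route the paper actually takes, is to bypass Proposition~\ref{alphaprop}(ii) and compare $\alpha$ directly with the full Cram\'er rate function: from Kantorovich duality and monotonicity/lower-semicontinuity of $\alpha$ one gets $\alpha(t)\leq\inf\{H(\nu\,|\,\mu):\int f\,d\nu+\int g\,d\mu=t\}=\Lambda_\phi^*(t)$ for each fixed pair of dual potentials $\phi=(f,g)$ (here $\Lambda_\phi$ is the log-Laplace transform of $f(X)+\int g\,d\mu$, and the identity with the entropy infimum is Donsker--Varadhan plus a minimax argument, i.e.\ the Cram\'er rate function representation); the non-asymptotic Chernoff bound then gives $\frac1n\log\pr[\cdots]\leq-\Lambda_\phi^*(r)\leq-\alpha(r)$. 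The piecewise structure of $\alpha$ is retained because the comparison $\alpha\leq\Lambda_\phi^*$ is made pointwise, before any conjugation, rather than through the loss-inducing biconjugate $(\alpha_1^\star\vee\alpha_2^\star)^\star$.
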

\begin{proof} The proof is identical to that of Theorem 2 from~\cite{gozlan2007large} with the difference that $\alpha$ is not convex but rather piecewise convex. To avoid repetition, we shall only sketch the main steps. To this end, we have from Kantorovich duality, monotonicity and continuity of $\alpha$ that
$$\mu\in\mathscr{T}_\alpha(c)\iff 
\tilde{\alpha}\bigg(\int_{\mathcal{X}}fd\mu+\int_{\mathcal{X}}gd\nu  \bigg)\leq H(\nu\;|\;\mu)
$$
for all $f, g$ that satisfy the assumptions of (2) and $\tilde{\alpha}$ the extension of $\alpha$ to $\R$ by $0$. In turn the latter is equivalent to 
$$
t\tilde{\alpha}(t)\leq\inf\bigg\{ H(\nu\;|\;\mu) :\nu \;\text{s.t.} \int_{\mathcal{X}} f d\nu + \int_{\mathcal{X}} g d\mu=t\bigg\}=\Lambda_{\phi}^*(t), 
\quad\text{for all }t\in\R,
$$
where for each fixed $\phi=(f,g)$  (see Equation~(21) from the aforementioned reference) 
$\Lambda_{\phi}^*$ is the convex conjugate of the log-Laplace transform
$\Lambda(s)=\int_{\mathcal{X}}\exp\{ sf(x)+ \int gd\mu \}d\mu(x)$
(this is essentially a consequence of Cram\'er's theorem for large deviations of iid random variables, \cite{dupuis2011weak}, Theorem 3.5.1).
The forward implication is then complete by a Markov inequality argument which makes no use of convexity for~$\alpha$ and is thus omitted. For the converse, one has that the deviation estimate in (2) implies $\alpha\leq \Lambda_{\phi}^*$ (which is also independent of convexity assumptions on $\alpha$) which in turn is equivalent to $\mu\in\mathscr{T}_{\alpha}(c)$.    
\end{proof}

\begin{cor}\label{cor:deviation}
Let $(\mathcal{X}, d)$ be a Polish space, $\mu\in\mathscr{P}(\mathcal{X})$, $L_n$ as in~\eqref{eq:Ln} and  $x_0\in\mathcal{X}$ such that $d(x_0,\cdot)\in L^1(\mu)$.
Moreover, assume that $\mu\in\mathscr{T}_{\alpha}(d^p)$ for some $p\in(0, 1]$, 
with $\alpha:=\alpha_1\wedge \alpha_2$ a super-quadratic deviation function as in Corollary~\ref{Cor:Exponential Moments}(ii).
Then there exist $C_p, s_0>0$ such that for all $n\in\N$ and $s>s_0$,
\begin{equation}
\begin{aligned}
 \frac{1}{n}\log \pr\left[\frac{1}{n}\sum_{k=1}^{n}d(X_k,x_0)\geq s \right] \leq -C_ps^{2p}.
\end{aligned}
\end{equation}
\end{cor}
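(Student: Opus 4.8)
The plan is to deduce the estimate from the Weibull-type exponential integrability of $d^{2p}(x_0,\cdot)$ furnished by Corollary~\ref{Cor:Exponential Moments}(ii), tensorised over the i.i.d.\ sample, together with a Chernoff bound, the reduction from the empirical average of $d(X_k,x_0)$ to that of $d^{2p}(X_k,x_0)$ being carried out via Jensen's inequality. Note that feeding Proposition~\ref{prop:deviation} directly with the admissible pair $f=d^p(x_0,\cdot)$, $g=-d^p(x_0,\cdot)$ only controls averages of functions of at most $d^p$-growth, i.e.\ it yields a deviation bound for $\tfrac1n\sum_k d^p(X_k,x_0)$, which is too weak to bound $\tfrac1n\sum_k d(X_k,x_0)$; this is precisely why one must pass to the stronger integrability of $d^{2p}$.

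First I would invoke Corollary~\ref{Cor:Exponential Moments}(ii): since $\mu\in\mathscr{T}_{\alpha}(d^p)$ with $\alpha=\alpha_1\wedge\alpha_2$ super-quadratic and $d(x_0,\cdot)\in L^1(\mu)$, there is $\lambda_0>0$ such that
\begin{equation*}
K_\lambda:=\int_{\mathcal{X}}\exp\!\Big\{\tfrac{\lambda^2}{2}\,d^{2p}(x_0,x)\Big\}\,d\mu(x)<\infty\qquad\text{for all }\lambda\in(0,\lambda_0);
\end{equation*}
I would fix one such $\lambda$ and record that $K_\lambda\ge 1$. Next, since $2p\ge 1$ in the cases of interest (in the applications $p=1/q$ with $q\in[1,2)$), convexity of $t\mapsto t^{2p}$ and Jensen's inequality yield $\big(\tfrac1n\sum_k a_k\big)^{2p}\le\tfrac1n\sum_k a_k^{2p}$ for all $a_1,\dots,a_n\ge 0$, so that $\big\{\tfrac1n\sum_k d(X_k,x_0)\ge s\big\}\subseteq\big\{\sum_k d^{2p}(X_k,x_0)\ge n\,s^{2p}\big\}$. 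Applying Markov's inequality to $\exp\{\tfrac{\lambda^2}{2}\sum_k d^{2p}(X_k,x_0)\}$ and factorising the expectation by independence,
\begin{equation*}
\pr\!\Big[\tfrac1n\textstyle\sum_k d(X_k,x_0)\ge s\Big]\le \pr\!\Big[\textstyle\sum_k d^{2p}(X_k,x_0)\ge n\,s^{2p}\Big]\le e^{-\frac{\lambda^2}{2}n\,s^{2p}}\,\ex\!\Big[e^{\frac{\lambda^2}{2}d^{2p}(X_1,x_0)}\Big]^{n}=\Big(K_\lambda\,e^{-\frac{\lambda^2}{2}s^{2p}}\Big)^{n},
\end{equation*}
whence $\tfrac1n\log\pr\big[\tfrac1n\sum_k d(X_k,x_0)\ge s\big]\le \log K_\lambda-\tfrac{\lambda^2}{2}\,s^{2p}$ for every $n\in\N$.

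Finally, taking $s_0:=\big(4\lambda^{-2}\log K_\lambda\big)^{1/(2p)}\vee 1$ and $C_p:=\lambda^2/4$ gives $\log K_\lambda\le\tfrac{\lambda^2}{4}\,s^{2p}$ for all $s>s_0$, and hence $\tfrac1n\log\pr\big[\tfrac1n\sum_k d(X_k,x_0)\ge s\big]\le -C_p\,s^{2p}$, which is the claim. I expect the only genuinely delicate point to be the trade of $\tfrac1n\sum_k d(X_k,x_0)$ for $\tfrac1n\sum_k d^{2p}(X_k,x_0)$: the cost $d^p$ only yields Weibull-$2p$ tails for $d(x_0,\cdot)$, so one cannot apply Chernoff to the sum of the $d(X_k,x_0)$ themselves and must route through $d^{2p}$ — and it is exactly here that $2p\ge 1$ enters, the power-mean inequality being replaced, when $2p<1$, by the subadditivity estimate $\sum_k a_k^{2p}\ge\big(\sum_k a_k\big)^{2p}$ at the cost of a factor $n^{2p-1}$.
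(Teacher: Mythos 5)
Your argument takes a genuinely different route from the paper's. The paper applies Proposition~\ref{prop:deviation} with $f=-g=d^p(x_0,\cdot)$, which gives $\frac{1}{n}\log\pr\big[\frac{1}{n}\sum_k d^p(X_k,x_0)\geq 2s\big]\leq-\alpha(s)$ for $s$ above the mean of $d^p(x_0,\cdot)$, and then invokes ``H\"older's inequality and the super-quadratic growth of $\alpha$'' to pass to the empirical average of $d(X_k,x_0)$ itself. You bypass Proposition~\ref{prop:deviation} entirely: you tensorise the exponential moment of $d^{2p}(x_0,\cdot)$ from Corollary~\ref{Cor:Exponential Moments}(ii) over the sample and run a Chernoff bound, with the reduction from $\frac{1}{n}\sum_k d(X_k,x_0)$ to $\frac{1}{n}\sum_k d^{2p}(X_k,x_0)$ supplied by Jensen. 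Your route is simpler and constant-explicit, but it hinges on the convexity of $t\mapsto t^{2p}$, hence on $2p\geq 1$.

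You have in fact isolated the one genuinely delicate step, and it is the same step at which the paper's own proof is elliptical. For $p\in(0,1)$, concavity of $t\mapsto t^p$ gives $\frac{1}{n}\sum_k d^p(X_k,x_0)\leq\big(\frac{1}{n}\sum_k d(X_k,x_0)\big)^p$, and hence the inclusion $\big\{\frac{1}{n}\sum_k d^p\geq R\big\}\subseteq\big\{\frac{1}{n}\sum_k d\geq R^{1/p}\big\}$, which controls $\pr\big[\frac{1}{n}\sum_k d\geq R^{1/p}\big]$ from \emph{below}; the paper's final display asserts the converse estimate, and it is not evident how the invoked ``H\"older'' inequality produces it. Your Jensen step goes the right way at the cost of $2p\geq1$, and your closing remark is on target: for $2p<1$ one is thrown back on subadditivity, the $n^{2p-1}$ prefactor is not removable, and indeed no $n$-uniform bound of the form $-C_ps^{2p}$ can be expected in that regime, since $d(X_k,x_0)$ then has, generically, Weibull tails of shape $2p<1$, its log-Laplace transform is infinite at every positive argument, and a ``single big jump'' lower bound forces $\frac{1}{n}\log\pr\big[\frac{1}{n}\sum_k d(X_k,x_0)\geq s\big]\to 0$ as $n\to\infty$ for fixed $s$. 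In short, your argument establishes the corollary for $p\in[\frac{1}{2},1]$ and correctly diagnoses why that restriction is structural rather than an artefact of the method.
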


\begin{proof} The functions $f=-g=d^p(x_0, \cdot)$ satisfy all the assumptions of Proposition~\ref{prop:deviation}. Indeed,~\eqref{eq:fgc} holds by triangle inequality and integrability is satisfied by assumption. 
With this choice and another application of the triangle inequality we obtain for all $s>\ex^{\mu} g$,
\begin{equation}
\begin{aligned}
 \frac{1}{n}\log \pr\bigg[ \frac{1}{n}\sum_{k=1}^{n}d^p(X_k,x_0)\geq 2s   \bigg]\leq \frac{1}{n}\log \pr\bigg[ \frac{1}{n}\sum_{k=1}^{n}f(X_k)-\int_{\mathcal{X}}gd\mu\geq s   \bigg]\leq -\alpha(s).
\end{aligned}
\end{equation}
From H\"older's inequality and the super-quadratic growth of $\alpha$ it follows that 
\begin{equation}
\frac{1}{n}\log \pr\bigg[ \frac{1}{n}\sum_{k=1}^{n}d(X_k,x_0)\geq (2s)^{1/p}   \bigg]\leq -Cs^2,
\end{equation}
and the proof is complete by substituting~$S$ by $(s/2)^p$.
\end{proof}

\begin{rem} The assumption that~$\mathcal{X}$ is Polish is sufficient to guarantee the validity and well-posedness of the Kantorovich-dual formulation of the transportation cost~$W_c$,
used implicitly for example in the proof of Proposition~\ref{alphaprop}.
\end{rem}

\subsection{A generalised contraction principle }\label{subsec:TCIcon} Contraction principles for Talagrand's inequalities~\eqref{eq: pTCI} have been proved in~\cite[Lemma 2.1]{djellout2004transportation} and~\cite[Lemma 4.1]{riedel2017transportation}. 
The previous results concern Lipschitz maps of measures that satisfy Talagrand's $\mathscr{T}_{2}(C)$. 
We now prove a generalised contraction principle for maps that satisfy a certain type of "uniform continuity" condition.
We start with an assumption on the domain space of the contraction principle.
\begin{assu}\label{assu:Setting}
$\mathcal{X}$ is a Polish space, $c_{\mathcal{X}}: \mathcal{X}\times\mathcal{X}\rightarrow[0,\infty]$ is a measurable function and 
there exists a measure $\mu\in\mathscr{P}(\mathcal{X})$ and a constant $C>0$ such that, 
for every $\nu \in\mathscr{P}(\mathcal{X})$,
$$  \left(\inf_{\pi\in\Pi(\nu,\mu)}\iint_{\mathcal{X}\times\mathcal{X}}c^2_{\mathcal{X}}(x_1, x_2)d\pi(x_1,x_2)\right)^{\half}\leq \sqrt{CH(\nu\; |\; \mu)}.
$$
\end{assu}

\begin{lem}[Extended contraction principle]\label{Lem: contraction1}
Under Assumption~\ref{assu:Setting},
let $\mathcal{Y}$ be a metric space, $c_{\mathcal{Y}}: \mathcal{Y}\times\mathcal{Y}\rightarrow[0,\infty]$ a measurable function and assume that there exists a measurable map $\Psi:\mathcal{X}\rightarrow\mathcal{Y}$ and $r\geq 1$ such that for all $x_1, x_2\in\mathcal{X}_0\subset \mathcal{X}, $ with $\mu(\mathcal{X}_0)=1$,
\begin{equation}
c_{\mathcal{Y}}\big(\Psi(x_1),  \Psi(x_2)\big)\leq L(x_1) \bigg[c_{\mathcal{X}}(x_1, x_2)\vee c_{\mathcal{X}}(x_1, x_2)^{\frac{1}{r}}\bigg],
\end{equation}
where $L\in L^{p^*}(\mathcal{X}, \mu)$ for  $p^*=2\vee(r/r-1)$. 
Then $\tilde{\mu}=\mu\circ\Psi^{-1}\in\mathscr{T}_{\alpha}(c_{\mathcal{Y}})$, where, for some constant $C>0$, $\alpha(t)=Ct^2\wedge t^{2r}$.
\end{lem}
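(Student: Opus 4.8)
The strategy is to transport the Talagrand-type estimate from Assumption~\ref{assu:Setting} on $\mathcal{X}$ to the target space $\mathcal{Y}$ via the map $\Psi$, controlling the loss incurred by the non-Lipschitz (Hölder at large scales) modulus of continuity through a Hölder inequality against $L\in L^{p^*}(\mu)$. First I would fix $\tilde{\nu}\in\mathscr{P}(\mathcal{Y})$ with $\tilde{\nu}\ll\tilde{\mu}$ and $H(\tilde{\nu}\,|\,\tilde{\mu})<\infty$ (otherwise there is nothing to prove). The standard lifting argument (as in \cite[Lemma~2.1]{djellout2004transportation}) produces a measure $\nu\in\mathscr{P}(\mathcal{X})$ with $\nu\ll\mu$, $\nu\circ\Psi^{-1}=\tilde{\nu}$, and $H(\nu\,|\,\mu)=H(\tilde{\nu}\,|\,\tilde{\mu})$: indeed one sets $\frac{d\nu}{d\mu}=\frac{d\tilde{\nu}}{d\tilde{\mu}}\circ\Psi$ and checks the entropy identity by the disintegration of $\mu$ along $\Psi$. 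Applying Assumption~\ref{assu:Setting} to this $\nu$ gives a coupling $\pi\in\Pi(\nu,\mu)$ with $\iint c_{\mathcal{X}}^2\,d\pi$ close to (or equal to, by lower semicontinuity and a minimising-sequence argument) $CH(\nu\,|\,\mu)$.

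Next I would push $\pi$ forward under $\Psi\times\Psi$ to obtain $\tilde{\pi}=(\Psi\times\Psi)_*\pi\in\Pi(\tilde{\nu},\tilde{\mu})$, and estimate
$$
W_{c_{\mathcal{Y}}}(\tilde{\mu},\tilde{\nu})\leq\iint_{\mathcal{Y}\times\mathcal{Y}}c_{\mathcal{Y}}\,d\tilde{\pi}=\iint_{\mathcal{X}\times\mathcal{X}}c_{\mathcal{Y}}\big(\Psi(x_1),\Psi(x_2)\big)\,d\pi(x_1,x_2)\leq\iint L(x_1)\Big[c_{\mathcal{X}}(x_1,x_2)\vee c_{\mathcal{X}}(x_1,x_2)^{1/r}\Big]d\pi,
$$
where the last step uses the hypothesis on $\mathcal{X}_0$ together with $\mu(\mathcal{X}_0)=1$ (and hence $[\pi]_2=\mu$ charges only $\mathcal{X}_0$, so the modulus bound holds $\pi$-a.e.; note it is $x_2=\,$the $\mu$-marginal that must lie in $\mathcal{X}_0$, which is why the direction of the coupling in Assumption~\ref{assu:Setting} matters and one should be careful to keep $L$ evaluated at the $\nu$-point or re-derive the bound with $L(x_2)$ as needed). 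Splitting the integral over $\{c_{\mathcal{X}}\leq 1\}$ and $\{c_{\mathcal{X}}>1\}$, on the first set $c_{\mathcal{X}}\vee c_{\mathcal{X}}^{1/r}=c_{\mathcal{X}}^{1/r}$ and on the second it equals $c_{\mathcal{X}}$. I would then apply Hölder with exponents $(2,2)$ on the "large" part to get $\big(\int L^2\big)^{1/2}\big(\iint c_{\mathcal{X}}^2 d\pi\big)^{1/2}\lesssim\sqrt{H}$, and Hölder with exponents $(p^*, (p^*)')$ where $(p^*)'=r/(r-1)\cdot\ldots$ — more precisely, on the "small" part $\int L\,c_{\mathcal{X}}^{1/r}\leq\|L\|_{L^{p^*}}\big(\iint c_{\mathcal{X}}^{q/r}d\pi\big)^{1/q}$ with $q$ the conjugate of $p^*$; since $p^*\geq r/(r-1)$ gives $q\leq r$, one has $q/r\leq 1\leq 2$, so $\iint c_{\mathcal{X}}^{q/r}d\pi\leq 1+\iint c_{\mathcal{X}}^2 d\pi\lesssim 1+H$, yielding a bound of the form $\big(1+H\big)^{1/q}\leq 1+H^{1/q}$ up to constants, and one checks $1/q\geq 1/r$ so this is dominated by a constant times $(1\vee H)^{1/r}$. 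Actually the cleanest route: bound $c_{\mathcal{X}}^{1/r}\leq 1\vee c_{\mathcal{X}}\leq 1\vee c_{\mathcal{X}}^{2}$ pointwise? — no, one loses the right exponent; so I would keep the Hölder bookkeeping and simply record that there is $C>0$ with $W_{c_{\mathcal{Y}}}(\tilde{\mu},\tilde{\nu})\leq C\big(\sqrt{H}\vee H^{1/(2r)}\big)$, the first term governing small $H$ and the second large $H$ (after reconciling the $c_{\mathcal{X}}^{1/r}$ exponent with the $L^2$-control of $c_{\mathcal{X}}$, which forces the effective exponent $1/(2r)$).

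Finally I would invert this: $W_{c_{\mathcal{Y}}}\leq C(\sqrt{H}\vee H^{1/(2r)})$ is equivalent to $H\geq c\,(W_{c_{\mathcal{Y}}}^2\wedge W_{c_{\mathcal{Y}}}^{2r})$ for a suitable constant, i.e. $\alpha\big(W_{c_{\mathcal{Y}}}(\tilde{\mu},\tilde{\nu})\big)\leq H(\tilde{\nu}\,|\,\tilde{\mu})$ with $\alpha(t)=C(t^2\wedge t^{2r})$; one should double-check that $\alpha$ so defined is lower semicontinuous with $\alpha(0)=0$, and that $c_{\mathcal{Y}}(y,y)=0$ follows from the modulus bound at $x_1=x_2$ (it gives $c_{\mathcal{Y}}(\Psi(x),\Psi(x))=0$ on $\mathcal{X}_0$, which suffices $\tilde{\mu}$-a.e.). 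The main obstacle is the bookkeeping in the two-regime Hölder estimate — in particular making sure the exponent on the "small-scale" term is exactly $2r$ and not something worse, which is precisely why the hypothesis demands $L\in L^{p^*}$ with $p^*=2\vee\frac{r}{r-1}$ rather than merely $L^2$; a secondary subtlety is the direction of the coupling (which marginal is $\mu$) so that $L$ is evaluated at a point lying in the full-measure set $\mathcal{X}_0$, which may require either stating Assumption~\ref{assu:Setting} symmetrically or a minor adjustment replacing $L(x_1)$ by $L(x_2)$ in the displayed modulus inequality.
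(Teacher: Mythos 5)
Your proposal follows the paper's proof exactly in outline: lift $\tilde\nu$ to a $\nu\ll\mu$ on $\mathcal{X}$ with the same entropy (your choice $d\nu/d\mu=(d\tilde\nu/d\tilde\mu)\circ\Psi$ is the very $\nu_0$ the paper exhibits; the paper then closes by invoking $H(\tilde\nu\,|\,\tilde\mu)=\inf\{H(\nu\,|\,\mu):\nu\circ\Psi^{-1}=\tilde\nu\}$, whereas your direct computation that this $\nu_0$ realizes the infimum is an equivalent and slightly more self-contained route), push a near-optimal coupling forward by $\Psi\times\Psi$, decompose over $\{c_{\mathcal{X}}\le 1\}$ and $\{c_{\mathcal{X}}>1\}$, apply H\"older to each piece, and invert the resulting inequality.

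There is, however, a concrete gap in your small-scale estimate. You write $\iint c_{\mathcal{X}}^{q/r}\,d\pi\le 1+\iint c_{\mathcal{X}}^2\,d\pi\lesssim 1+H$ and then raise to the power $1/q$. This bound does not vanish as $H\to 0$: it yields $W_{c_{\mathcal{Y}}}\lesssim 1\vee\sqrt H$, which cannot be inverted to $\alpha(W_{c_{\mathcal{Y}}})\le H$ in the regime of small $W_{c_{\mathcal{Y}}}$ — yet that regime is precisely where the exponent $2r$ in $\alpha$ matters (for $t<1$, $t^{2r}<t^2$, so $\alpha(t)=Ct^{2r}$ there; note also that in your sentence ``the first term governing small $H$'' the roles are swapped: it is $H^{1/(2r)}$ that dominates $\sqrt H$ for small $H$, not the other way round). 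The missing step is a single Cauchy--Schwarz against the probability measure $\pi$:
\begin{equation}
\iint_{\{c_{\mathcal{X}}\le 1\}}c_{\mathcal{X}}\,d\pi\;\le\;\iint c_{\mathcal{X}}\,d\pi\;\le\;\Bigl(\iint c_{\mathcal{X}}^2\,d\pi\Bigr)^{1/2}\;\le\;\sqrt{CH(\nu\,|\,\mu)},
\end{equation}
after which H\"older with the conjugate pair $\bigl(\tfrac{r}{r-1},r\bigr)$ (legitimate since $L\in L^{p^*}\subseteq L^{r/(r-1)}$ on a probability space) gives $\|L\|_{L^{r/(r-1)}}\bigl(\iint_{\{c_{\mathcal{X}}\le 1\}}c_{\mathcal{X}}\,d\pi\bigr)^{1/r}\lesssim H^{1/(2r)}$, which is the decay you correctly guessed in your parenthetical but did not actually derive. (Equivalently, with conjugate pair $(2,2)$ one uses Jensen for the concave map $x\mapsto x^{1/r}$ to get $\iint c_{\mathcal{X}}^{2/r}\,d\pi\le(\iint c_{\mathcal{X}}^2\,d\pi)^{1/r}$.) Once this is in place, $W_{c_{\mathcal{Y}}}\lesssim H^{1/(2r)}\vee H^{1/2}$ and the inversion to $\alpha(t)=C(t^2\wedge t^{2r})$ goes through as you describe.

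The secondary point you raise is a legitimate one: with $\pi\in\Pi(\nu,\mu)$ as in Assumption~\ref{assu:Setting} the first marginal is $\nu$, so $L(x_1)$ integrates against $\nu$ and H\"older produces $\|L\|_{L^{p^*}(\nu)}$, not the hypothesized $\|L\|_{L^{p^*}(\mu)}$. One should read the modulus hypothesis with $L(x_2)$ (the $\mu$-slot), which is in fact how it is used in every application in the paper (the weight always depends on the un-shifted noise), or equivalently state the assumption with $\Pi(\mu,\nu)$.
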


\begin{proof}	
Without loss of generality we may assume $C = 1$. Let $\tilde{\nu}\in\mathscr{P}(\mathcal{Y})$ and assume that $H(\tilde{\nu}\;|\;\tilde{\mu})$ is finite.
Choose $\nu\in\mathscr{P}(\mathcal{X})$ such that $\tilde{\nu}=\nu\circ\Psi^{-1}$ and   $\nu\ll \mu$  (note that there is at least one~$\nu$ which fulfills this condition;  e.g. $\nu_0(dx):= d\tilde{\nu}/d\tilde{\mu} (\Psi(x))\mu(dx))$. 
Then, an application of H\"older's inequality yields
\begin{equation}
\begin{aligned}
\inf_{\tilde{\pi}\in\Pi(\tilde{\nu},\tilde{\mu})}\iint_{\mathcal{Y}\times\mathcal{Y}}c_{\mathcal{Y}}(y_1, y_2)&d\tilde{\pi}(y_1,y_2)\leq \inf_{\pi\in\Pi(\nu,\mu)}\iint_{\mathcal{X}\times\mathcal{X}}c_{\mathcal{Y}}\big(\Psi(x_1), \Psi(x_2)\big)d\pi(x_1,x_2)\\&
\leq \inf_{\pi\in\Pi(\nu,\mu)}\iint_{\mathcal{X}\times\mathcal{X}}L(x_1)\bigg[c_{\mathcal{X}}(x_1, x_2)\vee c_{\mathcal{X}}(x_1, x_2)^{\frac{1}{r}}\bigg]d\pi(x_1,x_2)\\&
= \inf_{\pi\in\Pi(\nu,\mu)}\iint_{\{c_{\mathcal{X}}\leq 1\}}L(x_1) c_{\mathcal{X}}(x_1, x_2)^{\frac{1}{r}}d\pi(x_1,x_2)\\&
+\inf_{\pi\in\Pi(\nu,\mu)}\iint_{\{c_{\mathcal{X}}>1\}}L(x_1) c_{\mathcal{X}}(x_1, x_2)d\pi(x_1,x_2)\\&
\leq \|L\|_{L^{r/(r-1)}}\bigg(\inf_{\pi\in\Pi(\nu,\mu)}\iint_{\{c_{\mathcal{X}}\leq 1\}}c_{\mathcal{X}}(x_1, x_2)d\pi(x_1,x_2)\bigg)^{\frac{1}{r}}\\&+
\|L\|_{L^{2}}\bigg(\inf_{\pi\in\Pi(\nu,\mu)}\iint_{\{c_{\mathcal{X}}> 1\}}c_{\mathcal{X}}(x_1, x_2)^{2}d\pi(x_1,x_2)\bigg)^{\frac{1}{2}}%
\\&
\lesssim\bigg(H(\nu\; |\; \mu)\bigg)^{\frac{1}{2r}}+
\bigg(H(\nu\; |\; \mu)\bigg)^{\frac{1}{2}}\lesssim \bigg(H(\nu\; |\; \mu)\bigg)^{\frac{1}{2r}}\vee\bigg(H(\nu\; |\; \mu)\bigg)^{\frac{1}{2}}.
\end{aligned}
\end{equation}
The proof is complete upon invoking the identity 
\begin{equation}\label{entropyid}
H(\tilde{\nu}\;|\;\tilde{\mu})=\inf\big\{ H(\nu\; |\; \mu); \nu\in\mathscr{P}(\mathcal{X}): \nu\circ\Psi^{-1} =\tilde{\nu}    \big\}
\end{equation}
which holds when~$\mathcal{X}$ is a Polish space.
\end{proof}
\begin{rem}
The previous lemma is used in the proof of most of our main results. 
We emphasise here that for the identity~\eqref{entropyid} to hold, it is sufficient to require that~$\mathcal{X}$ is Polish (and not~$\mathcal{Y})$. In view of the latter, the same is true for the contraction principle.
\end{rem}

\section{TCIs for Gaussian rough differential equations}\label{Section: RPs} Our first result concerns the solutions of Rough Differential Equations (RDEs) driven by a Gaussian process with continuous paths. Throughout this section, $\Cc^{\text{p-var}}(I;\R^d)$ is the Banach space of continuous $\R^d$-valued paths of finite $p$-variation, defined on the compact interval $I\subset[0,\infty];$ the $p$-variation distance is denoted by $g_{\text{p-var}}$. 

\begin{dfn}\label{dfn:RP} (Gaussian rough paths)  Let $T>0, p\in[1, 3)$ and~$X$ be a $d$-dimensional, continuous Gaussian process on $[0,T]$ with paths of finite $p$-variation. 
\begin{enumerate}
\item A geometric $p$-rough path $\mathbf{X}$ over~$X$ is a pair 
$$
\mathbf{X}=\mathscr{L}(X):=\left(X, \mathbb{X}\right)\in C\left([0,T]^2;\R^d\oplus \R^{d\otimes d}\right),
$$
such that the following hold $\pr$-almost surely:
\begin{enumerate}
    \item (Chen's relation) $X_{s,t}=X_{s,u}+X_{u,t}$ and $\mathbb{X}_{s,t}=\mathbb{X}_{s,u}+\mathbb{X}_{u,t}+X_{s,u}\otimes X_{u,t} $ for all $0\leq s\leq u\leq t\leq T$.
    \item ($p$-variation regularity) $\|\mathbf{X}\|_{\text{p-var}}^p:=\sup_{(t_i)\in\mathcal{P}[0,T]}\sum_{i}\bigg(|X_{t_{i},t_{i+1}}|+\big|\mathbb{X}_{t_i,t_{i+1}}\big| \bigg)^{p}$ is finite,
where $\mathcal{P}[0,T]$ is the collection of finite dissections of $[0,T]$.
\end{enumerate}
\item The inhomogeneous $p$-variation metric $\mathbf{g}_{\text{p-var}}$ is defined for two geometric $p$-rough paths by $\mathbf{g}_{\text{p-var}}(\mathbf{X}, \mathbf{Y}) := \|\mathbf{X}-\mathbf{Y}\|_{\text{p-var}}$.
\item The space $\mathcal{D}^{0,p}_g([0,T];\R^m)$ of geometric $p$-rough paths is defined as the completion of the set $\{ \mathscr{L}(f), f\in C^{\infty}\}$ with respect to $\mathbf{g}_{\text{p-var}}$.
\end{enumerate}
\end{dfn}

\begin{rem}
The metric space  $(\mathcal{D}^{0,p}_g([0,T];\R^d), \mathbf{g}_{\text{p-var}}$) is Polish~\cite[Proposition~8.27]{friz2010multidimensional}. 
The second-order process $\mathbb{X}$ is typically given by the iterated integral $\mathbb{X}_{s,t}=\int_{s}^{t}(X_r-X_s)\otimes dX_r$ which is defined as a limit (in probability) of piecewise linear approximations.
\end{rem}

\begin{thm}\label{Thm: TCIRDE} Let $T>0, p\in[1, 3)$ and $X=(X_t)_{t\in[0,T]}$ be a $d$-dimensional, continuous, mean-zero Gaussian process with Cameron-Martin space $\h$ such that 
\begin{enumerate}
\item[(i)] $X$ has a natural lift to a geometric $p$-rough path $\mathbf{X}$;
\item[(ii)] there exists $q$ with $\frac{1}{p}+\frac{1}{q}>1$ such that $\h\hookrightarrow \Cc^{q-var}([0,T];\R^d)$. 
\end{enumerate}
Next, let $\gamma>p$, $V=(V_1, \dots, V_d)$ be $Lip^\gamma$-vector fields on $\R^d$  \cite[Definition~10.2]{friz2010multidimensional} and consider the solution $(Y_t)_{t\in[0,T]}$ of the RDE
\begin{equation}\label{eq:RDE1}
    dY_t= V(Y_t)d\mathbf{X}_t, 
    \qquad Y_0\in\R^m.
\end{equation}
Then the following hold:
\begin{enumerate}
    \item The law $\mu\in\mathscr{P}(\mathcal{D}^{0,p}_g[0,T])$ of~$\mathbf{X}$ 
    satisfies $\mathscr{T}_{\alpha}(\mathbf{g}_{\text{p-var}}^{1/2})$  with $\alpha(t)=C(t\wedge t^2)$, $C>0$.
    \item The law $\mu\in \mathscr{P}(\Cc^{\text{p-var}}([0,T];\R^m))$ of~$Y$ 
    satisfies $\mathscr{T}_{\alpha}(g_{\text{p-var}}^{1/q})$ with $\alpha(t)=C(t^{2q}\wedge t^2)$, $C>0$.
\end{enumerate}
\end{thm}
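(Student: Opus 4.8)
The plan is to apply the extended contraction principle of Lemma~\ref{Lem: contraction1} twice: once to obtain part~(1) for the rough path lift, and once more, composing with the Itô--Lyons solution map, to obtain part~(2) for~$Y$. For part~(1), I would take $\mathcal{X} = E$ the abstract Wiener space associated with~$X$ (with Cameron--Martin space~$\h$), $c_{\mathcal{X}}$ the Cameron--Martin metric $c_{\mathcal{X}}(\omega_1,\omega_2) = \|\omega_1 - \omega_2\|_{\h}$ (extended to $+\infty$ off $\h$-shifts), so that Assumption~\ref{assu:Setting} holds by the Feyel--\"Ust\"unel $2$-TCI on abstract Wiener space with dimension-free constant. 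The map is $\Psi = \mathscr{L}: \omega \mapsto \mathbf{X}(\omega)$, and the target is $\mathcal{Y} = \mathcal{D}^{0,p}_g([0,T];\R^d)$ with $c_{\mathcal{Y}} = \mathbf{g}_{\text{p-var}}^{1/2}$. The key input is a Cameron--Martin continuity estimate for the lift: for $h \in \h$, $\mathbf{g}_{\text{p-var}}(\mathbf{X}(\omega), \mathbf{X}(\omega+h)) \lesssim L(\omega)\big(\|h\|_{\h} \vee \|h\|_{\h}^2\big)$ for a random variable $L$ with Gaussian (hence all-$L^{p^*}$) integrability; such estimates are classical in Gaussian rough path theory (via the decomposition of $\mathbb{X}$ into Wiener chaoses of order $\le 2$ and the associated hypercontractive tail bounds, using assumption~(ii) to control the $q$-variation of the shift). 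Since the first-order level lives in the first chaos and the second-order level in the second, the natural scaling is $\|h\|_{\h}$ at level one and $\|h\|_{\h}^2$ at level two; taking the square root $\mathbf{g}_{\text{p-var}}^{1/2}$ in the cost balances these into a single power, so the hypothesis of Lemma~\ref{Lem: contraction1} holds with $r = 2$, yielding $\alpha(t) = C(t^2 \wedge t^{2r}) = C(t^2 \wedge t^4)$ — wait, this gives $t^2 \wedge t^4$, not $t \wedge t^2$; the stated $\alpha(t) = C(t \wedge t^2)$ in part~(1) arises because the cost there is $\mathbf{g}_{\text{p-var}}^{1/2}$, i.e.\ one works with $c_{\mathcal{Y}}' = \mathbf{g}_{\text{p-var}}$ and exponent relationship so that, after the square-root reparametrisation, $\alpha(W_{\mathbf{g}_{\text{p-var}}^{1/2}}) = \alpha'(W)$ with $\alpha'(t) = C(t \wedge t^2)$; I would set up the continuity estimate as $\mathbf{g}_{\text{p-var}}^{1/2}(\mathbf{X}(\omega),\mathbf{X}(\omega+h)) \le L(\omega)(\|h\|_\h \vee \|h\|_\h^{1/2})$, i.e.\ with $r=2$ in Lemma~\ref{Lem: contraction1} applied directly to $c_{\mathcal Y} = \mathbf{g}_{\text{p-var}}^{1/2}$, giving exactly $\alpha(t) = C(t^2 \wedge t^4)$; reconciling the precise exponent in the statement with this is the first bookkeeping point to settle carefully.

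For part~(2), I would compose: let $\Phi: \mathcal{D}^{0,p}_g([0,T];\R^d) \to \Cc^{\text{p-var}}([0,T];\R^m)$ be the Itô--Lyons solution map $\mathbf{X} \mapsto Y$, which is locally Lipschitz in the inhomogeneous $p$-variation metric (uniformly on bounded sets of $\|\mathbf{X}\|_{\text{p-var}}$) by the universal limit theorem, given that the vector fields are $Lip^\gamma$ with $\gamma > p$. Rather than composing two contraction principles abstractly (which would require care because the intermediate measure is not Gaussian and the Lipschitz constant of $\Phi$ is only local), the cleaner route is a single application of Lemma~\ref{Lem: contraction1} with $\Psi = \Phi \circ \mathscr{L}: E \to \Cc^{\text{p-var}}([0,T];\R^m)$, $c_{\mathcal{X}}$ the $\h$-metric as before, and $c_{\mathcal{Y}} = g_{\text{p-var}}^{1/q}$. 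The required estimate is then
\begin{equation}
g_{\text{p-var}}\big(Y(\omega), Y(\omega+h)\big) \le L(\omega)\,\big(\|h\|_{\h} \vee \|h\|_{\h}^{1/q}\big),
\end{equation}
which, after taking $1/q$-th powers, fits the hypothesis of Lemma~\ref{Lem: contraction1} with $r = q$ and cost $g_{\text{p-var}}^{1/q}$, producing $\alpha(t) = C(t^2 \wedge t^{2q})$ as claimed. Here $L$ must absorb both the local Lipschitz constant of $\Phi$ (a polynomial in $\|\mathbf{X}(\omega)\|_{\text{p-var}}$) and the chaos-norm of the lift increment; the exponent $1/q$ on $\|h\|_\h$ for small $\|h\|_\h$ is exactly where the hypothesis $1/p + 1/q > 1$ enters, since it is this Young-regime condition that lets one estimate the $p$-variation of the perturbed solution by interpolating the $q$-variation regularity of $h \in \h$ against the $p$-variation of $\mathbf{X}$. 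I would quote the relevant continuity estimates from the Cass--Litterer--Lyons / Friz--Victoir machinery (the "greedy partition" / local accumulation estimates that underlie~\cite{cass2013integrability, friz2013integrability}), which is precisely where the Weibull-type tails — and hence the need for the $t^{2q}$ branch rather than a globally quadratic $\alpha$ — come from.

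The main obstacle is establishing the Cameron--Martin continuity estimate with the correct random prefactor $L \in L^{p^*}$: one must show that $L(\omega)$, which controls $g_{\text{p-var}}(Y(\omega), Y(\omega+h))$ relative to $\|h\|_\h \vee \|h\|_\h^{1/q}$, has enough integrability, where $p^* = 2 \vee \frac{r}{r-1} = 2 \vee \frac{q}{q-1}$. Since $q \in [1,2)$, we have $\frac{q}{q-1} \ge 2$, so $p^* = \frac{q}{q-1}$, and the delicate point is that $L$ only has Weibull$(2/q)$ tails (by~\cite{cass2013integrability}), hence finite moments of all orders — so $L \in L^{p^*}$ is in fact fine, but verifying that the constant appearing in the $\h$-continuity estimate really inherits these Weibull tails (as opposed to worse behaviour) requires invoking the sharp integrability results rather than naive Gronwall bounds. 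A secondary technical point is the split of the $\h$-continuity estimate into the regimes $\|h\|_\h \le 1$ and $\|h\|_\h > 1$ to produce the $\vee$ structure matching Lemma~\ref{Lem: contraction1}; for large $\|h\|_\h$ one simply uses the linear-in-$\|h\|_\h$ bound, which is crude but sufficient since the entropy term dominates there. Once the estimate is in hand, both parts follow by direct invocation of Lemma~\ref{Lem: contraction1}, with the deviation functions read off from the exponents as above.
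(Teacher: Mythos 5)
Your strategy matches the paper's: apply the extended contraction principle (Lemma~\ref{Lem: contraction1}) together with the Cameron--Martin shift estimates from Riedel (Lemmas~2.10 and~2.11 of that paper), once for the rough-path lift and once for the RDE solution. For part~(2) you compose $\Phi\circ\mathscr{L}$ directly from the abstract Wiener space, whereas the paper takes $\mathcal{X}=\mathcal{D}^{0,p}_g[0,T]$ equipped with the Cameron--Martin pseudometric as the domain; these are equivalent, and your version is slightly cleaner. One small slip there: the quoted estimate already carries the $1/q$-th power on the left, $g_{\text{p-var}}(Y^h,Y)^{1/q}\lesssim L(\|h\|_\h\vee\|h\|_\h^{1/q})$, so it fits Lemma~\ref{Lem: contraction1} with $r=q$ and $c_{\mathcal{Y}}=g_{\text{p-var}}^{1/q}$ directly, without any further manipulation of exponents.

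The bookkeeping point you flagged in part~(1) is a genuine discrepancy in the paper, not something you should have been able to reconcile. With $c_{\mathcal{Y}}=\mathbf{g}_{\text{p-var}}^{1/2}$, $c_{\mathcal{X}}=\|\cdot\|_\h$, and the shift estimate $\mathbf{g}_{\text{p-var}}^{1/2}(T_h\mathbf{X},\mathbf{X})\lesssim L\,(\|h\|_\h\vee\|h\|_\h^{1/2})$, Lemma~\ref{Lem: contraction1} applies with $r=2$ and produces $\alpha(t)=C(t^2\wedge t^{4})$ --- consistent with the pattern $\alpha(t)=C(t^{2r}\wedge t^2)$ that appears with $r=q$ in part~(2). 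The stated $\alpha(t)=C(t\wedge t^2)$ does not follow from the lemma: no admissible choice of $r\geq1$ yields $t\wedge t^2$ from this estimate, and the ``square-root reparametrisation'' you attempt is not available because $W_{c}$ and $W_{c^{1/2}}$ are not related by taking powers (the infimum over couplings does not commute with the nonlinearity $c\mapsto c^{1/2}$). You should simply carry $\alpha(t)=C(t^2\wedge t^4)$ forward. The exponential tail for $\|\mathbf{X}\|_{\text{p-var}}$ in Corollary~\ref{cor: LatalaCLL} still follows from this weaker $\alpha$, but by optimising over $s$ directly in Proposition~\ref{alphaprop}(iii), using that $\alpha_1^\star\vee\alpha_2^\star(s)=O(s^2)$ for $s$ large; a literal appeal to Corollary~\ref{Cor:Exponential Moments}(ii) would fail, since $t^4\not\geq Ct^2$ near the origin.
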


\begin{rem} Talagrand's inequalities for $q=1$, 
which corresponds to Brownian or "smoother" paths, have been proved in~\cite[Theorem~2.14]{riedel2017transportation}. 
Our result shows that solutions of RDEs with "rougher" drivers (e.g. fBm with Hurst parameter 
$H\in (\frac{1}{3},\half)$) also satisfy TCIs with a different cost and deviation function. In fact, setting $q=1$, we recover $\mathscr{T}_1(C)$ from~\cite{riedel2017transportation}.
\end{rem}

\begin{proof} (1) Let $s,t\in[0,T]^2$ and 
$$
T_h\mathbf{X}_{s,t} := \bigg(X_{s,t}+h_{s,t}, \mathbb{X}_{s,t}+\int_{s}^{t} h_{s,r}\otimes dX_r+\int_{s}^{t} X_{s,r}\otimes dh_r+\int_{s}^{t} h_{s,r}\otimes dh_r\bigg)
$$ denote the translation of~$\mathbf{X}$ in the direction of $h\in\h$ (note that the assumptions on $X$ guarantee that the last two integrals on the right-hand side are well-defined Young integrals).
From~\cite[Lemma~2.10]{riedel2017transportation}, 
we have for some constant $C>0$
$$
\mathbf{g}_{\text{p-var}}(T_h\mathbf{X}, \mathbf{X})^{1/2}\leq C(1\vee\|x\|_{\text{p-var}})\|h\|_{\h}\vee \|h\|^{1/2}_{\h}.
$$ 
Appealing to Lemma~\ref{Lem: contraction1} with $\mathcal{X}=\Cc^{\text{p-var}}([0,T];\R^m)$, $\mathcal{Y}=\mathcal{D}^{0,p}_g[0,T]$,
$\Psi=\mathbf{X}$ and $c_{\mathcal{Y}}$ be the Cameron-Martin pseudometric on $\mathcal{D}^{0,p}_g[0,T]$ the conclusion follows.\\
(2) Let $Y^h$ be the solution of~\eqref{eq:RDE1} driven by $T_h\mathbf{X}$. From~\cite[Lemma~2.11]{riedel2017transportation}, we have
$$
g_{\text{p-var}}(Y^h, Y)^{1/q}\leq C\exp\Big(N_1(\mathbf{X};[0,T])+1\Big)\|h\|_{\h}\vee \|h\|^{1/q}_{\h},
$$ 
for some constant $C>0$, where the random variable $N_1(\mathbf{X};[0,T])$ has finite moments of all orders. Appealing to Lemma~\ref{Lem: contraction1} with $\mathcal{X}=\mathcal{D}^{0,p}_g[0,T], \mathcal{Y}=\Cc^{\text{p-var}}([0,T];\R^m), \Psi=Y$ and $c_{\mathcal{Y}}$ the Cameron-Martin pseudometric on $\mathcal{D}^{0,p}_g[0,T]$ the result follows.   
\end{proof}

\noindent The well-known estimates of~\cite[Theorem 6.23]{cass2013integrability} (see also~\cite{friz2013integrability} and the lower bound from~\cite[Theorem 1.1]{boedihardjo2022lack}) show that the laws of solutions of RDEs with bounded and sufficiently smooth vector fields are Weibull-tailed with shape parameter $2/q$. This non-Gaussian tail behaviour is a consequence of rough integration which takes into account not just the noise~$X$ but also iterated integrals of~$X$ with itself.   Due to the lack of Gaussian integrability, such measures are not expected to satisfy Talagrand's $\mathscr{T}_r(C)$ inequalities for any $r\geq 1$. Nevertheless, the more general TCI $\mathscr{T}_\alpha(c)$ allows us to recover the "correct" tail behaviour. Indeed, we have the following:

\begin{cor}\label{cor: LatalaCLL} 
For $X, Y, q$ as in Theorem~\ref{Thm: TCIRDE}, there exist $C_1, C_2>0$ such that for all $R>0$,
$$
\pr\bigg[ \|\mathbf{X}\|_{\text{p-var}}\geq R \bigg]   \leq C_1e^{-C_2 R}
\qquad\text{and}\qquad
\pr\bigg[ \|Y\|_{\text{p-var}}\geq R \bigg]   \leq C_1 e^{-C_2 R^{2/q}}.
$$
\end{cor}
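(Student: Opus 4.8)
The plan is to read off both estimates from the transportation-cost inequalities of Theorem~\ref{Thm: TCIRDE} by feeding them into the exponential-integrability bound of Corollary~\ref{Cor:Exponential Moments} and then applying a Chernoff (Markov) inequality. For the first estimate I would take as base point $x_0$ the lift of the constant zero path in $\mathcal{D}^{0,p}_g[0,T]$, and for the second the constant path equal to $Y_0$ in $\Cc^{\text{p-var}}([0,T];\R^m)$; then $\mathbf{g}_{\text{p-var}}(x_0, \cdot) = \|\cdot\|_{\text{p-var}}$, resp.\ $g_{\text{p-var}}(x_0, \cdot) = \|\cdot\|_{\text{p-var}}$. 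Both of these are $\mu$-integrable — indeed they have finite moments of every order under the hypotheses of Theorem~\ref{Thm: TCIRDE}, by the classical integrability of Gaussian rough paths and their RDE solutions (e.g.~\cite{friz2010multidimensional, cass2013integrability}) — so the standing assumption of Corollary~\ref{Cor:Exponential Moments} holds with $d = \mathbf{g}_{\text{p-var}}$, resp.\ $d = g_{\text{p-var}}$, and with cost exponent $p = \tfrac12$, resp.\ $p = \tfrac1q$, read off from the cost functions in Theorem~\ref{Thm: TCIRDE}.

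For the rough path I would apply Corollary~\ref{Cor:Exponential Moments}(ii) to $\mu \in \mathscr{T}_{\alpha}(\mathbf{g}_{\text{p-var}}^{1/2})$ from Theorem~\ref{Thm: TCIRDE}(1): its deviation function is quadratic near the origin, so the corollary applies with $2p = 1$ and furnishes $\lambda_0 > 0$ such that $\int \exp\{\tfrac{\lambda^2}{2}\|\mathbf{X}\|_{\text{p-var}}\}\, d\mu < \infty$ for every $\lambda < \lambda_0$. Markov's inequality then gives $\pr[\|\mathbf{X}\|_{\text{p-var}} \geq R] \leq e^{-\lambda^2 R/2}\int \exp\{\tfrac{\lambda^2}{2}\|\mathbf{X}\|_{\text{p-var}}\}\, d\mu$, i.e.\ the first bound with $C_2 = \lambda^2/2$. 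For the solution I would run the analogous argument on $\mu \in \mathscr{T}_{\alpha}(g_{\text{p-var}}^{1/q})$, $\alpha(t) = C(t^{2q}\wedge t^2)$, from Theorem~\ref{Thm: TCIRDE}(2); when $q > 1$ this $\alpha$ is not super-quadratic at the origin, so Corollary~\ref{Cor:Exponential Moments}(ii) is not invoked verbatim, but since $\alpha(t) = Ct^2$ for $t \geq 1$ one has $\alpha_i(t) \geq Ct^2 - C$ for all $t \geq 0$ on both branches $\alpha_1(t) = Ct^{2q}$ and $\alpha_2(t) = Ct^2$, hence $\alpha_1^\star \vee \alpha_2^\star(s) \leq \tfrac{s^2}{4C} + C$ for all $s \geq 0$ — precisely the at-most-quadratic growth of the conjugates that makes the Gaussian-averaging step in the proof of that corollary converge. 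Running that step with $2p = 2/q$ produces $\lambda_0 > 0$ with $\int \exp\{\tfrac{\lambda^2}{2}\|Y\|_{\text{p-var}}^{2/q}\}\, d\mu < \infty$ for $\lambda < \lambda_0$, and applying Markov's inequality to the event $\{\|Y\|_{\text{p-var}}^{2/q} \geq R^{2/q}\}$ yields the second bound with $C_2 = \lambda^2/2$.

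The argument is essentially bookkeeping and there is no deep obstacle; what needs care is the choice of base point (so that the $L^1$-hypothesis of Corollary~\ref{Cor:Exponential Moments} holds and $d^{2p}(x_0, \cdot)$ is literally $\|\mathbf{X}\|_{\text{p-var}}$, resp.\ $\|Y\|_{\text{p-var}}^{2/q}$), and keeping the exponents straight — the cost exponent $p$ of each TCI entering as the factor $2p$ in the Gaussian-moment bound, so that $p = \tfrac12$ gives the exponential tail of $\|\mathbf{X}\|_{\text{p-var}}$ and $p = \tfrac1q$ the Weibull-$2/q$ tail of $\|Y\|_{\text{p-var}}$. The one genuine subtlety for the solution — and the reason the exponent $2/q$, rather than the weaker $1/q$ produced by the bare bound of Corollary~\ref{Cor:Exponential Moments}(i), is correct — is that it is the quadratic behaviour of $\alpha$ at infinity, not near the origin, that drives the Gaussian-averaging step.
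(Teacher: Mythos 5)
Your approach mirrors the paper's word for word: apply Corollary~\ref{Cor:Exponential Moments}(ii) to the TCIs of Theorem~\ref{Thm: TCIRDE} with cost exponent $p=1/2$, resp.\ $p=1/q$, then Markov. The bookkeeping (base point $x_0=0$, the $L^1(\mu)$-hypothesis, the factor $2p$ in the Gaussian-moment bound) is correct. The genuinely useful observation you add concerns $q>1$: the stated hypothesis of Corollary~\ref{Cor:Exponential Moments}(ii), that $\alpha(t)\geq Ct^2$ near the origin, is not satisfied by $\alpha(t)=C(t^{2q}\wedge t^2)$, since near $0$ this equals $Ct^{2q}<Ct^2$. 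The paper invokes the corollary without flagging this. Your diagnosis is exactly right: what the Gaussian-averaging step actually needs is the global affine-quadratic bound $\alpha_i(t)\geq Ct^2-C$ for all $t\geq0$, equivalently $\alpha_i^\star(s)\leq s^2/(4C)+C$ for every $s\geq0$, i.e.\ quadratic growth of $\alpha$ at infinity rather than at the origin. That is the condition under which $\int\exp\{|\lambda s|\,\ex^\mu[d^p(x_0,\cdot)]+\alpha_1^\star\vee\alpha_2^\star(|\lambda s|)\}\,d\gamma(s)$ is finite for small $|\lambda|$, and it is the right patch.

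You should, however, carry the same diagnosis through to the first bound. You accept $\alpha(t)=C(t\wedge t^2)$ from Theorem~\ref{Thm: TCIRDE}(1) because it is quadratic near the origin, so the stated hypothesis of Corollary~\ref{Cor:Exponential Moments}(ii) holds literally. But that $\alpha$ is only linear at infinity, so by your own criterion the branch $\alpha_1(t)=Ct$ gives $\alpha_1^\star(s)=+\infty$ for $s>C$ and the Gaussian-averaging integral diverges: here the stated hypothesis of (ii) is met, yet its conclusion would fail. The resolution is that Lemma~\ref{Lem: contraction1}, applied with $r=2$ to the shift estimate $\mathbf{g}_{\text{p-var}}(T_h\mathbf{X},\mathbf{X})^{1/2}\lesssim(1\vee\|X\|_{\text{p-var}})(\|h\|_\h\vee\|h\|_\h^{1/2})$, actually produces $\alpha(t)=C(t^2\wedge t^{2r})=C(t^2\wedge t^4)$; the deviation function printed in Theorem~\ref{Thm: TCIRDE}(1) appears to be a slip (compare with Theorem~\ref{Thm: TCIRDE}(2), which correctly reports $C(t^{2q}\wedge t^2)$ for $r=q$). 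With the corrected $\alpha(t)=C(t^2\wedge t^4)$, quartic near $0$ and quadratic at infinity, the situation is exactly parallel to the $q>1$ case: the near-origin hypothesis of Corollary~\ref{Cor:Exponential Moments}(ii) fails, but your at-infinity criterion applies verbatim and yields $\ex[\exp(\lambda\|\mathbf{X}\|_{\text{p-var}})]<\infty$ for small $\lambda>0$, hence the exponential tail by Markov. So the repair you supplied for $Y$ is the template for both halves of the corollary; apply it uniformly, with the deviation function that Lemma~\ref{Lem: contraction1} actually gives, and the proof is complete.
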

\begin{proof}
From Theorem~\ref{Thm: TCIRDE}(1) and Corollary~\ref{Cor:Exponential Moments}(ii) with 
$\mathcal{X}=\mathcal{D}_g^{0,p}[0,T]$, $x_0=0$, $d=\mathbf{g}_{\text{p-var}}$, $p=\half$, 
there exists $\lambda_0>0$ such that
$\ex[\exp(  \lambda\|\mathbf{X}\|_{\text{p-var}})]$ is finite for all $\lambda<\lambda_0$.
An application of Markov's inequality allows us to conclude that 
$$
\pr\bigg[ \|\mathbf{X}\|_{\text{p-var}}\geq R \bigg]\leq e^{-\lambda R}\ex\Bigg[\exp\bigg(  \lambda\|\mathbf{X}\|_{\text{p-var}}  \bigg)\Bigg].
$$
Similarly to Theorem~\ref{Thm: TCIRDE}(2) and Corollary~\ref{Cor:Exponential Moments}(ii) with 
$\mathcal{X}=\Cc^{\text{p-var}}[0,T]$, $x_0=0$, $d=g_{\text{p-var}}$, $p=1/q$, 
there exists $\lambda_0>0$ such that 
$
\ex[\exp(\lambda\|Y\|^{2/q}_{\text{p-var}})]$ is finite for all $\lambda<\lambda_0$.
Once again the conclusion follows from Markov's inequality
$$
\pr\bigg[ \|Y\|_{\text{p-var}}\geq R \bigg]\leq e^{-\lambda R^{2/q}}\ex\Bigg[\exp\bigg(  \lambda\|Y\|^{2/q}_{\text{p-var}}  \bigg)\Bigg].
$$
\end{proof}

\begin{rem} Examples of Gaussian processes~$X$ that satisfy the assumptions of Theorem~\ref{Thm: TCIRDE} include Brownian motion (in which case~$Y$ is interpreted in the Stratonovich sense), 
fractional Brownian motion with Hurst exponent $H\in(\frac{1}{3}, 1)$,
Brownian (and more generally Gaussian) bridges, Ornstein-Uhlenbeck processes driven by Brownian motion and bifractional Brownian motion~\cite[Example~2.6]{riedel2017transportation} with parameters $H,K$ satisfying $HK\in(\frac{1}{3}, 1)$.
\end{rem}

\section{TCIs for regularity structures: Rough Volatility }\label{Section: Rough Vol}
In this section we focus on a simple rough volatility model.
To this end, let $H\in(0,\half]$ and consider the evolution of log-prices governed by the stochastic differential equation
	\begin{equation}\label{roughvolmodel}
	    dS_t/S_t= f\left( \widehat{W}_t^H, t\right)dW_t,
     \qquad S_0=s_0 > 0.
	    \end{equation}
Here~$W$ is a standard Brownian motion, $fdW$ is an It\^o integral and $\widehat{W}^H$ is a Riemann-Liouville (or type-II) fractional Brownian motion, 
in particular,  
$$
\widehat{W}^H_t = \int_{0}^{t}K^H(t-r)dW_r, \qquad \text{for }t\geq 0,
$$
	    where $K^H:[0,\infty)\rightarrow\R$ denotes the power-law Volterra kernel $K^H(t)=\sqrt{2H}t^{H-\half}$. 
	    As is well known, the solution map of an It\^o SDE is not continuous with respect to the driving Brownian motion~$W$.
     The theory of rough paths, initially developed by Lyons~\cite{lyons1998differential}, provides a remedy for this lack of continuity for a large class of SDEs. In particular, Lyons' universal limit theorem~\cite[Theorem 8.5]{friz2020course} asserts that the solution of an SDE is a continuous image of the canonical rough path lift of the noise with respect to an appropriate rough path topology.  
	    
As explained in~\cite[Section 2]{bayer2020regularity}, the SDE~\eqref{roughvolmodel} is beyond the reach of rough paths theory since $\widehat{W}^H$ and~$W$ are not independent and because calibrated rough volatility models feature~\cite{gatheral2018volatility} Hurst indices $H<\frac{1}{4}$. 
Nevertheless, as shown in~\cite{bayer2020regularity}, the continuity of the rough volatility solution map can be recovered in the framework of Hairer's theory of regularity structures~\cite{hairer2014theory}. 
The main idea is to enhance the noise $"dW"$ with sufficient higher-order functionals defined with respect to a fixed regularity structure~$\Tt$. 
The enhanced solution map is then continuous with respect to the topology of models defined on~$\Tt$ (see Definition \ref{dfn:modeltop}) .
	    
\begin{dfn} Let $A\subset\R$ be a locally finite (e.g. discrete) index set that is bounded from below. A regularity structure is defined as a pair $(\Tt, G)$ of a vector space~$\Tt$ (the structure space) and a group~$G$ (the structure group) with the following properties: 
\begin{enumerate}
\item[(i)] $\Tt=\bigoplus_{\alpha\in A} \Tt_\alpha$, where for each $\alpha\in A, \Tt_\alpha$   is a Banach space. 
Each element $\tau\in \Tt_a$ is said to have degree (or homogeneity) $\alpha$ and we write $|\tau|=\alpha$. For each $\tau\in\Tt$, $\|\tau\|_\alpha$ denotes the norm of the component of~$\tau$ in~$\Tt_{\alpha}$.
\item[(i)] $G$ is a group of linear transformations on $\Tt$ such that
for each $\Gamma\in G$, $\alpha\in A$, $\tau_\alpha\in\Tt_\alpha$,
\begin{equation}\label{reexpansion}
\Gamma\tau_\alpha-\tau_{\alpha}\in\bigoplus_{\beta<\alpha}\Tt_{\beta}.
\end{equation}
\end{enumerate}
\end{dfn}
    
       \begin{rem} A useful analogy is that of a regularity structure as an abstraction of Taylor expansions. In particular, one can think that for each $\alpha\in A$, $ \Tt_\alpha$ and $\Tt$ contain monomials of degree $\alpha$ and abstract Taylor polynomials respectively. The action of $G$ on $\Tt$ can be thought of as "re-expansion" of a Taylor polynomial with respect to a different base point. Then, at a formal level,~\eqref{reexpansion} expresses the fact that the difference between a monomial of degree $\alpha$ and its re-expanded version will be a polynomial of degree  $\beta<\alpha$. For example, re-expanding the second  degree monomial $x^2\in\Tt_2$ around $1$ gives us
       $$\Gamma_1x^2-x^2=(x-1)^2-x^2=-2x+1\in\Tt_0\oplus\Tt_1=\bigoplus_{\beta<2}\Tt_\beta.$$
       \end{rem}
 
\subsection{ The rough volatility regularity structure}\label{rvolrs}
We now define the concrete rough volatility regularity structure, tailor-made to~\eqref{roughvolmodel}, as constructed in~\cite{bayer2020regularity}. 
First, we introduce a finite set of symbols that provide the building blocks for the abstract Taylor expansions. 
To this end, let $M\in\N, \kappa\in(0, H)$ and
$$
\mathcal{S} := \Big\{\mathbf{1}, \Xi, I(\Xi),\dots, I(\Xi)^M, \Xi I(\Xi), \Xi I(\Xi)^2,\dots, \Xi I(\Xi)^M\Big\}.
$$
Here, the symbol~$\Xi$ corresponds, up to realisation 
(Definition~\ref{modeldef}) to the underlying noise $dW=\dot{W}$ and $I$ denotes convolution with respect to the kernel $K^H$. The degrees of the symbols are postulated as follows: 
\begin{center}
\begin{tabular}{ccccccc}
\hline
Symbol & $\mathbf{1}$ & $\Xi$ & $I(\Xi)$ & $I(\Xi)^M$  & $\Xi I(\Xi)^M$\\
Degree & 0 & $-\half-\kappa$ & $\displaystyle H-\kappa$ & $\displaystyle M(H-\kappa)$ & $M(H-\kappa)-\half-\kappa$\\
\hline
\end{tabular}
\end{center}
The number~$M$ is chosen to be the smallest number for which $I(\Xi)^{M_+1}\Xi$ has a positive homogeneity (more precisely, this choice implies that the modelled distribution lift of $fdW$ belongs to a space $\mathcal{D}_T^\gamma(\Gamma)$ of positive regularity with $\gamma>0$, see Definition \ref{Ddef},~\eqref{eq:Dgammanorm} below)  to the reconstruction theorem for modelled distributions  Thus it suffices to consider $M$ such that $| \Xi I(\Xi)^{M+1}|=(M+1)(H-\kappa)-\half-\kappa>0$. Thus we take 
\begin{equation}\label{Mchoice}
M:= \Mf(H,\kappa):= \max\bigg\{ m\in\N: m(H-\kappa)-\frac{1}{2}-\kappa\leq 0\bigg\}.
\end{equation}
\begin{rem}
For very small~$\kappa \in (0,H)$, we have that
$m(H-\kappa)-\frac{1}{2}-\kappa\leq 0$
if and only if
$m \leq (\kappa+\frac{1}{2})/((H-\kappa)) =: M$, which tends to $\frac{1}{2H}$ from above.
When~$\kappa$ is close to~$H$, say of the form $\kappa = H-\varepsilon$,
then 
$m(H-\kappa)-\frac{1}{2}-\kappa\leq 0$
if and only if
$m \leq (H-\varepsilon+\frac{1}{2})/\varepsilon =: M$, which tends to infinity.
\end{rem}
The index set and structure space are then defined by $A:=\{|\tau|\;: \tau\in\mathcal{S}\}$ and \begin{equation}\label{Tvol}
\Tt=span\{\mathcal{S}\}=\bigoplus_{\alpha\in A}\Tt_\alpha:=\bigoplus_{\alpha\in A}span\{\tau\in\mathcal{S}: |\tau|= \alpha  \}.
\end{equation}
Turning to the structure group, we let $G:=\{\Gamma_h\;, h\in\R\}\subset\mathscr{L}(\Tt)$ such that for all $h\in\R$: 
\begin{equation}\label{eq:structuregroup1}\Gamma_h\mathbf{1}=\mathbf{1},\qquad 
\Gamma_h\Xi=\Xi,\qquad
\Gamma_hI(\Xi)=I(\Xi)+h\mathbf{1},
\end{equation}
and 
\begin{equation}\label{eq:structuregroup}
\Gamma_h\tau\tau'=\Gamma_h\tau\Gamma_h\tau',
\end{equation}
for all $\tau,\tau'\in\mathcal{S}$ such that $\tau\tau'\in\mathcal{S}$. The expression $\tau\tau'$ should be interpreted as a formal product between symbols. The maps $\Gamma_h$ are then extended to $\Tt$ by linearity. The group property of $G$ is inherited by the additive structure of the real numbers.

 A model for a regularity structure is a concrete interpretation of the abstract Taylor polynomials and their re-expansion rules. Part of the flexibility of the theory is owed to the fact that monomials can be very irregular functions and even Schwartz distributions.   In the next definition, $\mathscr{L}(X,Y)$ denotes the space of continuous, linear operators between two topological vector spaces~$X$ and~$Y$, $X^*$ denotes the continuous dual space of~$X$ and $\langle \cdot,\cdot \rangle: X^*\times X\rightarrow \R$  is the duality pairing $\langle x^*, x \rangle:=x^*(x)$.

\begin{dfn}\label{modeldef} Let $(\Tt, G)$ be the rough volatility regularity structure. A model for $(\Tt, G)$ over $\R$ is a pair $(\Pi, \Gamma)$ of "realisation" and "re-expansion" maps $$\Pi: \R \rightarrow\mathscr{L}(\Tt; (\Cc^\infty_c(\R)^*)\quad\Gamma:\R\times\R\rightarrow G, $$
that satisfy the following properties:
\begin{itemize}
\item For $s,t,z\in\R$, the abstract "Chen's relation" $\Pi_t=\Pi_s\Gamma_{s,t}$ holds and $\Gamma_{s,t}=\Gamma_{s,z}\Gamma_{z,t}$;
\item for all $\tau\in\Tt, \lambda\in(0,1)$,  $s, t$ in a compact set, $\Gamma_{s,t}\in G$, $\phi^\lambda_s(\cdot)=\lambda^{-1}\phi(\lambda^{-1}(\cdot-s))\in C_c^\infty(\R)$ with $\supp(\phi)\subset (-1, 1)$
	\begin{equation}
	\big| \langle \Pi_s\tau, \phi^\lambda_s\rangle \big|\lesssim\lambda^{|\tau|},\quad \Gamma_{s,t}\tau=\tau+\sum_{\tau': |\tau'|<|\tau|}c_{\tau'}(s,t)\tau',
	\end{equation}
	with  $|c_{\tau'}(s,t)|\lesssim |s-t|^{|\tau|-|\tau'|}$.
\end{itemize}
\end{dfn}

 We shall now introduce a (random) It\^o model $(\Pi, \Gamma )$, defined on an underlying probability space $(\Omega, \F, \pr)$, for the rough volatility regularity structure $\Tt$~\eqref{Tvol}. With~$W$ a Brownian motion extended to $\R$ by letting $W=0$ on $(-\infty, 0]$ and $t\geq 0$, let   $$\Pi_t\mathbf{1}=1,\;\Pi_t\Xi:= \dot{W}=(d/dt)W,$$ where the derivative is meant in the sense of distributions, i.e. for each $\phi\in \Cc^\infty_c(\R)$ we have 
\begin{equation}\label{eq:Itomodel1}
    \langle \Pi_t\Xi, \phi\rangle=\langle \dot{W}, \phi\rangle=-\int_\R W_s\dot{\phi}(s)ds.
\end{equation}
Note that we can restrict the white noise $\dot{W}$ to any finite time interval $[0,T]$ by setting $W=0$ on $\R\setminus[0,T]$ and due to stationarity the value of $\Pi_t\Xi$ does not depend on~$t$.
Next, let
\begin{equation}
    \label{eq:IXi}
    \Pi_t I(\Xi) := K^H*\dot{W}-K^H*\dot{W}_t = \widehat{W}^H-\widehat{W}^H(t),
\end{equation}
and for each $m=2,\dots, M$, 
$\Pi_t I(\Xi)^m = (\widehat{W}^H-\widehat{W}^H(t))^M$.
Similar to the rough path framework, one then considers It\^o integrals 
\begin{equation}
\mathbb{W}^m_{s,t}:=\int_{s}^{t}( \widehat{W}^H(r)-\widehat{W}^H(s) )^mdW_r\;, s<t,\; m=1,\dots, M,
\end{equation}
and defines
$$\Pi_s\Xi I(\Xi)^m:=\frac{d}{dt}\mathbb{W}^m_{s,t}$$
in the sense of distributions.  
The maps~$\Pi_{\cdot}$ are then extended to~$\Tt$ by linearity. 
Finally, for each $\Gamma\in G$  the (random) re-expansion maps are given by
\begin{equation}
    \label{eq:Gamma}
    \Gamma_{s,t}:=\Gamma_{\widehat{W}^H_s-\widehat{W}^H_t}
\end{equation}
(i.e. the right hand side results from $\Gamma_h$~\eqref{eq:structuregroup1},~\eqref{eq:structuregroup} evaluated at  $h=\widehat{W}^H_s-\widehat{W}^H_t)$.

In order to emphasise the dependence of the It\^o model on the realisation of the noise we will often write $(\Pi, \Gamma)=(\Pi^{\dot W}, \Gamma^{\dot W})$. For a fixed time horizon $T>0$, we denote the space of models $(\Pi, \Gamma)$ for the rough volatility regularity structure $\Tt$ by $\mathcal{M}_{T}(\Tt)$. The topology on this space will be discussed in the next section.
The object with the lowest homogeneity in~$\Tt$ is~$\Xi$, 
which corresponds to the white noise~$\dot{W}$.
For $\kappa\in(0,H)$, the latter can be considered as a Gaussian random element that takes values in the  Besov space $E:=\mathcal{C}^{-\half-\kappa}$ (see~\cite[Section 2.1]{chandra2017stochastic} for a proof in a more general setting), defined as the subspace of distributions $f\in (C_c^1(\R))^*$ such that for all $K\subset\R$ compact,
\begin{equation}\label{eq:Besovnorm}
    \|f\|_{-\half-\kappa, K}:=\sup_{s\in K}\sup_{\substack{\phi\in \Cc_c^1(\R), \|\phi\|_{\Cc^1}\leq 1\\ \supp(\phi)\subset(-1,1),\lambda\in(0,1]}}\lambda^{\half+\kappa}\big| \langle f, \phi^\lambda_s\rangle \big|<\infty,
\end{equation}
and $\phi_s^\lambda(t)=\lambda^{-1}\phi(\lambda^{-1}(t-s))$ as in Definition~\ref{modeldef}. 
The collection $\{\|f\|_{-\half-\kappa, K}; K\subset\R\;\text{compact}\}$ can be shown to be a family of seminorms that turn $\mathcal{C}^{-\half-\kappa}$ into a Fr\'echet space~\cite[Section~13.3.1]{friz2020course}, but for our purposes we shall restrict to $K=[0,T]$. 

\begin{rem}\label{rem:RVabstractWiener} The triple $(E,\h, \mu)$, where $\h:=L^2[0,T]$ is the Cameron-Martin space of $\dot{W}$ and $\mu:\mathscr{B}(E)\rightarrow[0,1]$ is the law of $\dot{W}$ is an abstract Wiener space.
\end{rem}

 We shall now introduce the  notions of "model lift" and of "model translation" by elements in~$\h$ (such operations have already been considered in~\cite{friz2021precise, friz2022precise}).

\begin{dfn}\label{def:modeltranslation} Let $(\Tt, G)$ be the rough volatility regularity structure and $(\Pi, \Gamma)$ the It\^o model for $(\Tt, G)$. The map 
	$$E\ni \xi\longmapsto\mathcal{L}(\xi)=(\Pi^{\xi}, \Gamma^\xi)\in\mathcal{M}_{T}(\Tt),$$ where the noise $\dot{W}$ is replaced by $\xi$ is called the model lift.  The model translation in the direction of $h\in\h$ is defined by \begin{equation}\label{modeltranslationdilation}
	\mathcal{M}_{T}(\Tt)\ni \Pi^\xi \longmapsto T_h(\Pi^\xi):=\Pi^{\xi+h}\in \mathcal{M}_{T}(\Tt).
	\end{equation}
 The translation $T_h\Gamma$ is defined analogously.
\end{dfn}
For fixed $h\in\h$, the (deterministic) model $(\Pi^h, \Gamma^h)$ is given by
$$
\Pi^h_t\mathbf{1}=1,\quad
\Pi^h_t\Xi=h,\quad
\Pi^h_tI(\Xi)^m=(\widehat{h}-\widehat{h}(t))^m,\quad
\Pi^h_t\Xi I(\Xi)^m=\frac{d}{ds}\mathbb{H}^m_{t,s},
\quad \text{for } m=1,\dots, M,
$$
where 
$$
\mathbb{H}^m_{t,s} :=
\int_{t}^{s}\left(\widehat{h}(r)-\widehat{h}(t) \right)^mh(r)dr.
$$
This last integral is well-defined since $\widehat{h}=\int_0^\cdot K^H(\cdot-r)h(r)dr\in \Cc^{H}[0,T]$ and~$H$ is square-integrable. Note that, for each $t\in[0,T]$ and $m=1,\dots M$, 
$\Pi^h_t I(\Xi)^m$ is function-valued. Nevertheless, the homogeneity of this term is $mH-\half$, which can be negative for small enough~$m$. 
\begin{rem} 
Similar to the case of random rough paths, the lift map $\mathcal{L}$ is well defined on smooth functions since the integrals in~$\Pi$ are then standard Riemann integrals. In general, however, due to the probabilistic step (i.e. the use of It\^o integration) in the construction of~$\Pi$,  
such a map is only well defined $\mu$-almost everywhere in~$E$. 
As a result, $\mathcal{L}$ is only a measurable map on the path space $(\Omega, \mathscr{F}, \pr)=(E, \mathscr{B}(E), \mu )$.
\end{rem}

\subsection{TCIs for It\^o models and the driftless log-price}\label{shiftsec} In this section we prove some continuity estimates for Cameron-Martin shifts of It\^o models and It\^o integrals of the form $\int_{0}^{\cdot}f( \widehat{W}^H_s, s)dW_s$.  Then, we present our result, Theorem~\ref{thm:TCIRV1}, on TCIs for It\^o models and the driftless log-price corresponding to~\eqref{roughvolmodel}.

\begin{dfn}(Model topology)\label{dfn:modeltop} Let $(\Tt, G)$ be a regularity structure and $T>0$. The distances $\|\cdot\|, \VERT\cdot\VERT:\mathcal{M}_{T}(\Tt)^2\rightarrow[0, \infty]$ between two models $(\Pi^1, \Gamma^1), (\Pi^2, \Gamma^2)\in\mathcal{M}_{T}(\Tt)$ are defined by 

\begin{equation}\label{twobar}	
\left\|\Pi^1, \Pi^2\right\| \equiv 
\left\|\Pi^1-\Pi^2\right\|
 := \sup_{\substack{\phi\in C_c^1(\R), \|\phi\|_{\Cc^1}\leq 1\\ \supp(\phi)\subset(-1,1),\lambda\in(0,1]\\ \tau\in\mathcal{S}, s\in[0,T]    }}
 \lambda^{-|\tau|}
 \left| \left\langle \left(\Pi^1_{s}-\Pi^2_{s}\right)\tau, \phi^\lambda_s\right\rangle \right|
	\end{equation}
	and 	
	\begin{equation}\label{threebar}	
	\left\VERT\left(\Pi^1, \Gamma^1\right), \left(\Pi^2, \Gamma^2\right) \right\VERT
  := \left\|\Pi^1-\Pi^2\right\| + \sup_{\substack{s,t\in[0,T], \tau\in\mathcal{S}\\A\ni\beta<|\tau|}}\frac{\big| \big(\Gamma^1_{t,s}- \Gamma^2_{t,s}\big)\tau\big|_\beta}{|t-s|^{|\tau|-\beta}},
	\end{equation}
	where $|\tau|_\beta$ denotes the absolute value of the coefficient of $\tau$ with $|\tau|=\beta$. 	
\end{dfn}

\begin{rem}\label{rem:modelequivalence} It turns out that the two notions of distance introduced in the previous definition are equivalent, provided that the models considered satisfy appropriate admissibility conditions. In the context of singular stochastic PDEs, this observation has been proved in~\cite[Remark~3.5]{cannizzaro2017malliavin}.
A similar observation holds for the case of the It\^o models considered for the rough volatility regularity structure~\cite[Lemma 3.19]{bayer2020regularity}.
\end{rem}

\begin{rem}\label{seprem} In order to avoid technical difficulties related to the well-posedness of the Kantorovich dual problem 
(see for instance the proof of Proposition~\ref{alphaprop}), 
it is of interest to define white noise as a random element taking values in a Polish space. As pointed out in~\cite[Remark 2.6]{hairer2015large}, this space is not separable. However, this can be fixed by defining $\mathcal{C}^{-\half-\kappa}$ as the completion of smooth functions with respect to the seminorms $\|\cdot\|_{-\half-\kappa, K}$. Another alternative would be to define $\dot{W}$ as a random element with values in the (much larger) space  $\mathcal{S}'$ of tempered distributions, which is separable. The same issue is present in the case of the metric space $(\mathcal{M}_{T}(\Tt), \|\cdot\|)$. Once again, the remedy is to define $\mathcal{M}_{T}(\Tt)$ as the completion of the set of smooth, admissible models (i.e.~$\Pi$ such that $[0,T]\ni s\mapsto \Pi_s\in\mathscr{L}(\Tt; (C_c^\infty(\R)^*)$ is smooth) under the metric $\|\cdot\|$.
\end{rem}

The estimates in the following two lemmata are used in the proof of Theorem~\ref{thm:TCIRV1} . Their proofs are deferred to Appendix~\ref{proof:shiftlem} and ~\ref{proof:Itoshiftlem} respectively. 
\begin{lem}(Model shifts)\label{shiftlem} 
Let $T>0$. The map $\h\ni h\mapsto T_h\Pi^\xi\in\mathcal{M}_T(\Tt)$ is locally Lipschitz continuous.
In particular, for $H\in(0,1), M\in\N$ as in~\eqref{Mchoice}, there exists $C_{H,M,T}>0$ and a random variable $K\in\cap_{p\geq 1}L^p(\Omega)$ such that we have the almost sure estimate 
	\begin{equation}
	\big\|T_{h_2}\Pi^{\xi}-T_{h_1}\Pi^{\xi}\big\|\leq C_{H,M,T}K_{h_1}\|h_2-h_1\|_{\h}\vee \|h_2-h_1\|_{\h}^{M+1},
	\quad\text{for all }h_1, h_2\in\h.
	\end{equation}	
\end{lem}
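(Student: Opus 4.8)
The plan is to bound the model seminorm $\|\cdot\|$ symbol by symbol over the finite set $\mathcal{S}$. Write $g:=h_2-h_1\in\h$ and, for $j=1,2$, let $\psi_j:=\widehat{W}^H+\widehat{h_j}$ be the (shifted) type-II fractional Brownian motion underlying $T_{h_j}\Pi^\xi$, so that $\psi_2=\psi_1+\widehat g$; I will use that $\widehat h\in\Cc^{H}[0,T]$ with $\|\widehat h\|_{\Cc^H}\lesssim_{H,T}\|h\|_\h$, and write $\delta_s F(r):=F(r)-F(s)$. The $\mathbf{1}$-component contributes nothing, since $(T_{h_j}\Pi^\xi)_s\mathbf{1}=1$. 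For $\Xi$, $\big((T_{h_2}\Pi^\xi)_s-(T_{h_1}\Pi^\xi)_s\big)\Xi=g$ as an element of $L^2[0,T]$, and testing against $\phi^\lambda_s$ (with $\|\phi\|_{\Cc^1}\le1$, $\supp\phi\subset(-1,1)$) via Cauchy--Schwarz gives $\lambda^{-|\Xi|}|\langle g,\phi^\lambda_s\rangle|=\lambda^{\frac12+\kappa}|\langle g,\phi^\lambda_s\rangle|\lesssim\lambda^\kappa\|g\|_\h$. For $I(\Xi)^m$ ($1\le m\le M$), a binomial expansion gives $(\delta_s\psi_2)^m-(\delta_s\psi_1)^m=\sum_{k=1}^m\binom mk(\delta_s\psi_1)^{m-k}(\delta_s\widehat g)^k$, and bounding each Hölder factor over an interval of length $\lesssim\lambda$ (recall $|I(\Xi)^m|=m(H-\kappa)$) will yield a bound $\lesssim\lambda^\kappa(1+\|\psi_1\|_{\Cc^{H-\kappa}})^M(\|g\|_\h\vee\|g\|_\h^M)$ for this component.

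The bulk of the work is the symbols $\Xi I(\Xi)^m$, $1\le m\le M$. Here $(T_{h_j}\Pi^\xi)_s\,\Xi I(\Xi)^m=\tfrac{d}{dt}\mathbb{W}^{(j),m}_{s,t}$ with $\mathbb{W}^{(j),m}_{s,t}=\int_s^t(\delta_s\psi_j)^m\big(dW_r+h_j(r)dr\big)$, and integration by parts gives $\langle\,\cdot\,,\phi^\lambda_s\rangle=-\lambda^{-1}\int_{-1}^1\mathbb{W}^{(j),m}_{s,s+\lambda v}\dot\phi(v)\,dv$, so that
\[
\lambda^{-|\Xi I(\Xi)^m|}\big|\big\langle\big((T_{h_2}\Pi^\xi)_s-(T_{h_1}\Pi^\xi)_s\big)\Xi I(\Xi)^m,\phi^\lambda_s\big\rangle\big|\lesssim\lambda^{-m(H-\kappa)-\frac12+\kappa}\sup_{|t-s|\le\lambda}\big|\mathbb{W}^{(2),m}_{s,t}-\mathbb{W}^{(1),m}_{s,t}\big|.
\]
Thus the crux is to prove, almost surely and uniformly in $s,t\in[0,T]$, an increment bound
\[
\big|\mathbb{W}^{(2),m}_{s,t}-\mathbb{W}^{(1),m}_{s,t}\big|\le C\,K_{h_1}\,\big(\|g\|_\h\vee\|g\|_\h^{M+1}\big)\,|t-s|^{m(H-\kappa)+\frac12-\kappa},\qquad K_{h_1}\in\bigcap_{p\ge1}L^p(\Omega),
\]
after which the net power of $\lambda$ above is nonnegative and the $\Xi I(\Xi)^m$-contribution is bounded uniformly in $\lambda\in(0,1]$. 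To this end I would expand the integrand of $\mathbb{W}^{(2),m}_{s,t}$ as $(\delta_s\psi_1+\delta_s\widehat g)^m$ and the integrator as $dW_r+(h_1+g)(r)dr$, subtract $\mathbb{W}^{(1),m}_{s,t}$, and obtain
\[
\mathbb{W}^{(2),m}_{s,t}-\mathbb{W}^{(1),m}_{s,t}=\sum_{k=1}^m\binom mk\int_s^t(\delta_s\psi_1)^{m-k}(\delta_s\widehat g)^k\big(dW_r+h_1(r)dr\big)+\sum_{k=0}^m\binom mk\int_s^t(\delta_s\psi_1)^{m-k}(\delta_s\widehat g)^k\,g(r)dr.
\]
The terms integrated against $h_1\,dr$ or $g\,dr$ I would bound pathwise, using $|\delta_s\psi_j(r)|\le\|\psi_j\|_{\Cc^{H-\kappa}}|r-s|^{H-\kappa}$, $|\delta_s\widehat g(r)|\lesssim\|g\|_\h|r-s|^{H}$ on $[s,t]$ and $\int_s^t|g|\le\|g\|_\h|t-s|^{1/2}$ (likewise for $h_1$); this gives a bound of the required form with exponents $\ge m(H-\kappa)+\tfrac12$ and powers of $\|g\|_\h$ running up to $M+1$ (the top power coming from $\int_s^t(\delta_s\widehat g)^m g(r)dr$). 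For the It\^o terms $\int_s^t(\delta_s\psi_1)^{m-k}(\delta_s\widehat g)^k\,dW_r$ — whose integrands are adapted, since $\widehat{W}^H$ is adapted to the Brownian filtration and $\widehat{h_1},\widehat g$ are deterministic — I would use Burkholder--Davis--Gundy, Jensen and the Gaussian moment bound $\ex|\delta_s\psi_1(r)|^q\lesssim_{q,H}(1+\|h_1\|_\h)^q|r-s|^{qH}$ to get, for each $p\ge2$,
\[
\Big\|\int_s^t(\delta_s\psi_1)^{m-k}(\delta_s\widehat g)^k\,dW_r\Big\|_{L^p}\lesssim_p(1+\|h_1\|_\h)^{m-k}\,\|g\|_\h^k\,|t-s|^{\frac12+Hm},
\]
and then a Kolmogorov continuity criterion for models (e.g.~\cite[Theorem~10.7]{hairer2014theory}) to upgrade this to an almost-sure bound with exponent $\tfrac12+Hm-\kappa\ge m(H-\kappa)+\tfrac12-\kappa$ and a random prefactor in every $L^p$. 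Summing the finitely many contributions, absorbing the deterministic polynomials in $\|h_1\|_\h$, the Kolmogorov constants, and $(1+\|\psi_1\|_{\Cc^{H-\kappa}})^M$ into a single $K_{h_1}\in\bigcap_{p\ge1}L^p(\Omega)$, and taking the supremum over $\tau\in\mathcal S$, $s\in[0,T]$, $\lambda\in(0,1]$ and admissible $\phi$ will give the displayed estimate; local Lipschitz continuity follows at once since $\|g\|_\h\vee\|g\|_\h^{M+1}\le(1\vee R^M)\|g\|_\h$ on $\{\|g\|_\h\le R\}$. (The $\Gamma$-part, whose coefficients are polynomials in increments of $\psi_j$ and therefore transform deterministically under $\psi_1\mapsto\psi_1+\widehat g$, should follow by the same — now purely deterministic — argument, or from the equivalence of the two model metrics on admissible models, Remark~\ref{rem:modelequivalence}.)

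The hard part will be the It\^o cross terms just discussed: for $H<\tfrac12$, the rough-volatility regime, the integrands $(\delta_s\psi_1)^{m-k}(\delta_s\widehat g)^k$ are only Hölder of order $H^{-}$ while $W$ is Hölder of every order below $\tfrac12$, so their product is not a Young integral and no pathwise definition or estimate is available; one must genuinely pass through the It\^o/BDG moment estimate and then a two-parameter Kolmogorov argument. The accompanying bookkeeping will also need care: checking that the powers of $\|h_2-h_1\|_\h$ are exactly $1$ through $M+1$; that each increment exponent strictly exceeds $m(H-\kappa)+\tfrac12-\kappa$, leaving room to absorb Kolmogorov's arbitrarily small loss; and that every random constant produced has moments of all orders, which holds because all the relevant increments are Gaussian or It\^o integrals of Gaussians.
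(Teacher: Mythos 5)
Your proposal is correct and follows essentially the same route as the paper's proof in Appendix A.1: a symbol-by-symbol bound on the model seminorm, with the $\Xi$ and $I(\Xi)^m$ components handled by Cauchy--Schwarz and a binomial expansion in $\widehat g=\widehat{h_2}-\widehat{h_1}$, and the $\Xi I(\Xi)^m$ components reduced, after integration by parts against $\phi^\lambda_s$, to an increment estimate for the two-parameter process $\mathbb{W}^{(j),m}_{s,t}$ whose It\^o part is controlled by BDG, deterministic moment bounds on $\delta_s\psi_1$, and a Kolmogorov continuity criterion for two-parameter processes. The only cosmetic difference is the bookkeeping: you split by integrator ($dW$, $h_1\,dr$, $g\,dr$), whereas the paper splits into $I_1,I_2,I_3$ and then $J_1,J_2$ according to which factor carries the shift difference; the two decompositions coincide term by term once expanded, and your identification of the top power $\|g\|_\h^{M+1}$ and the increment exponent $m(H-\kappa)+\tfrac12-\kappa$ matches the paper exactly.
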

Before we proceed to the analysis of the driftless log-price we introduce the following asumption on the volatility function $f$.
\begin{assu}\label{assu:f} The volatility function  $f:\R\times\R^{+}\rightarrow\R$ in~\eqref{roughvolmodel} satisfies 
\begin{equation}\label{eq:assu f}
    f(x,t)-f(y,t)=f_1(x-y,t)f_2(y,t),
\end{equation}
for all $t>0, x, y\in \R$, where 
\begin{itemize}
\item[(i)] $f_1:\R\times\R^+\rightarrow \R$ is differentiable and 
$|\nabla f_1|\leq G$ for some nondecreasing $G:\R\to[0, \infty]$;
\item[(ii)] for all $T>0$, there exist $C_1, C_2>0$ such that $\sup_{t\in[0,T]}|f_2(t,x)|\leq C_1(1+e^{C_2|x|})$.
\end{itemize}
\end{assu}

\begin{lem}(It\^o integral shifts)\label{Itoshiftlem} Let $(\Omega, \h, \pr)$ be the abstract Wiener space of $\dot{W}$ 
(Remark~\ref{rem:RVabstractWiener}) and $f:\R\times\R^{+}\rightarrow\R$ satisfy Assumption~\ref{assu:f}.
 Then for all $\gamma<\half$ there exists $c>0$ and a random variable $K\in\cap_{p\geq 1}L^p(\Omega)$ such that for all $h\in\h$, we have $\pr$-a.s.,
 	\begin{equation}
	   \bigg\|\bigg(\int_{0}^{\cdot}f( \widehat{W}^H_r, r)dW_r\bigg)(\omega+h)- \bigg(\int_{0}^{\cdot}f( \widehat{W}^H_r, r)dW_r\bigg)(\omega)\bigg\|_{\Cc^\gamma[0,T]}\leq K\big|G(c\|h\|_{\h})\big|\bigg(\|h\|_{\h}+1\bigg).
	\end{equation}	
\end{lem}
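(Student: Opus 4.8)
\emph{Proof plan.} My plan is to make the Cameron--Martin shift on the It\^o integral explicit, reduce---via Assumption~\ref{assu:f}---to a continuous local martingale plus two Lebesgue integrals, and control each piece by moment estimates, which I would finally upgrade to a pathwise bound uniform over all shift directions $h$. Throughout I would assume $G\geq 1$ (replace $G$ by $G\vee 1$).

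\emph{Step 1: shift formula and decomposition.} The shift $\omega\mapsto\omega+h$ replaces $\dot W$ by $\dot W+h$, hence $W$ by $W+\int_0^\cdot h(s)\,ds$ and $\widehat{W}^H$ by $\widehat{W}^H+\widehat{h}$, where $\widehat{h}:=\int_0^\cdot K^H(\cdot-r)h(r)\,dr$. By the Cameron--Martin transformation rule for It\^o integrals (equivalently Girsanov's theorem; this uses the natural version of the functional, which is continuous under $\h$-shifts by approximation),
\[
\Big(\int_0^\cdot f(\widehat{W}^H_r,r)\,dW_r\Big)(\omega+h)=\int_0^\cdot f(\widehat{W}^H_r+\widehat{h}_r,r)\,dW_r+\int_0^\cdot f(\widehat{W}^H_r+\widehat{h}_r,r)h(r)\,dr .
\]
Denoting by $D^h$ the difference to be estimated and using~\eqref{eq:assu f} with $y=\widehat{W}^H_r$, and again with $y=0$ (which also gives $f(x,r)=f(0,r)+f_1(x,r)f_2(0,r)$), one gets
\[
D^h=\underbrace{\int_0^\cdot f_1(\widehat{h}_r,r)f_2(\widehat{W}^H_r,r)\,dW_r}_{=:M^h}+\int_0^\cdot\big(f(\widehat{W}^H_r,r)+f_1(\widehat{h}_r,r)f_2(\widehat{W}^H_r,r)\big)h(r)\,dr .
\]

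\emph{Step 2: deterministic and Gaussian inputs.} For square-integrable $h$ one has $\widehat{h}\in\Cc^{H}[0,T]$ with $\|\widehat{h}\|_{\Cc^{H}[0,T]}\lesssim\|h\|_{\h}$, in particular $\|\widehat{h}\|_{\infty,[0,T]}\leq c\|h\|_{\h}$ for a suitable $c>0$ (this is the $c$ in the statement). Setting $x=y$ in~\eqref{eq:assu f} forces $f_1(0,\cdot)\equiv0$ outside the trivial case $f_2\equiv0$, so the mean-value inequality and Assumption~\ref{assu:f}(i) give, pathwise and uniformly in $r$, $|f_1(\widehat{h}_r,r)|\leq\|\widehat{h}\|_\infty\,G(\|\widehat{h}\|_\infty)\leq c\|h\|_{\h}\,G(c\|h\|_{\h})$ and, more precisely, $\|f_1(\widehat{h}_\cdot,\cdot)\|_{\Cc^{H}[0,T]}\lesssim|G(c\|h\|_{\h})|\,(1+\|h\|_{\h})$, which is exactly the factor on the right of the claim. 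On the probabilistic side, $\widehat{W}^H$ is a continuous centred Gaussian process on $[0,T]$, so by the Borell--TIS inequality $\sup_{r\leq T}|\widehat{W}^H_r|$ has a Gaussian tail; since $f_2(\cdot,r)$ grows at most exponentially (Assumption~\ref{assu:f}(ii)) and therefore so does $f(\cdot,r)=f(0,r)+f_1(\cdot,r)f_2(0,r)$, the random variables $\Lambda_1:=\sup_{r\leq T}|f(\widehat{W}^H_r,r)|$ and $\Lambda_2:=\sup_{r\leq T}|f_2(\widehat{W}^H_r,r)|$ lie in $\bigcap_{p\geq1}L^p(\Omega)$ (a Gaussian tail beats exponential-times-polynomial growth).

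\emph{Step 3: the Lebesgue integrals and the martingale term.} For the two drift integrals I would use Cauchy--Schwarz on $[s,t]$, $\int_s^t|g(r)||h(r)|\,dr\leq(t-s)^{1/2}\|g\|_{L^\infty[0,T]}\|h\|_{\h}$, noting that for $\gamma<1/2$ a $(1/2)$-H\"older increment bound on $[0,T]$ is in particular a $\gamma$-H\"older bound; applied with $g=f(\widehat{W}^H_\cdot,\cdot)$, resp. $g=f_1(\widehat{h}_\cdot,\cdot)f_2(\widehat{W}^H_\cdot,\cdot)$, together with Step~2 this bounds the $\Cc^\gamma$-norms of these terms by $C(\Lambda_1\vee\Lambda_2)|G(c\|h\|_{\h})|(1+\|h\|_{\h})$ (the second term in fact carries an extra power of $\|h\|_{\h}$, harmlessly absorbed by enlarging the nondecreasing majorant $G$). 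The delicate piece is $M^h$: since $|f_1(\widehat{h}_r,r)|\leq c\|h\|_{\h}G(c\|h\|_{\h})$ is \emph{deterministic}, $\langle M^h\rangle_t-\langle M^h\rangle_s=\int_s^tf_1(\widehat{h}_r,r)^2f_2(\widehat{W}^H_r,r)^2\,dr\leq\big(c\|h\|_{\h}G(c\|h\|_{\h})\big)^2\Lambda_2^2\,(t-s)$, so Burkholder--Davis--Gundy gives, for every $p\geq1$, $\ex[|M^h_t-M^h_s|^{2p}]\leq C_p\big(c\|h\|_{\h}G(c\|h\|_{\h})\big)^{2p}\ex[\Lambda_2^{2p}](t-s)^p$, and the Garsia--Rodemich--Rumsey inequality (with $p$ so large that $\frac12-\frac1{2p}>\gamma$) yields $\ex[\|M^h\|_{\Cc^\gamma[0,T]}^{2p}]\leq C_{p,\gamma}\big(c\|h\|_{\h}G(c\|h\|_{\h})\big)^{2p}$. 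This is a bound for each fixed $h$, whereas the statement asks for a single $K\in\bigcap_pL^p$ good for all $h$ simultaneously. To close this gap I would note that $h\mapsto D^h$ is a.s. continuous from $\h$ into $\Cc^\gamma[0,T]$ (continuity of $h\mapsto\widehat{h}$, of $f,f_1,f_2$, and $L^2$-continuity of the It\^o integral in its integrand), so it suffices to control the supremum over a countable dense subset; on each ball $\{\|h\|_{\h}\leq R\}$ the integrands $\{f_1(\widehat{h}_\cdot,\cdot)\}$ form a bounded, hence $\Cc^0[0,T]$-relatively compact, family, and since $M^h-M^{h'}$ has quadratic variation controlled by $\|f_1(\widehat{h}_\cdot,\cdot)-f_1(\widehat{h}'_\cdot,\cdot)\|_\infty^2$, a chaining argument over this family upgrades the pointwise bounds to $\ex[\sup_{\|h\|_{\h}\leq R}\|M^h\|_{\Cc^\gamma}^{2p}]<\infty$; normalising by $|G(c\|h\|_{\h})|(1+\|h\|_{\h})$ and summing over $R\in\N$ produces the required $K$. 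I expect this last step---passing from moment bounds at fixed $h$ to a single pathwise constant uniform over the infinite-dimensional Cameron--Martin space---to be the main obstacle; the remaining ingredients are standard once Steps~1 and~2 are in place.
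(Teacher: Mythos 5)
Your decomposition in Steps 1--2 matches the paper's (modulo the paper omitting the $\int f(\widehat{W}^H_r,r)h_r\,dr$ piece, which you correctly retain), and your deterministic and Gaussian estimates are fine. The real issue is Step 3, and you identify it yourself: BDG--Kolmogorov gives a moment bound on $\|M^h\|_{\Cc^\gamma}$ \emph{for each fixed $h$}, and you need a single $K\in\bigcap_p L^p$ that works for \emph{all} $h\in\h$ simultaneously, $\pr$-a.s. The chaining argument you sketch to close that gap is not carried out and is not elementary. Controlling $\sup_{\|h\|_\h\leq R}\|M^h\|_{\Cc^\gamma}$ would require a genuine metric-entropy estimate on the family $\{f_1(\widehat h_\cdot,\cdot): \|h\|_\h\leq R\}$ (Arzel\`a--Ascoli compactness alone gives no quantitative rate), then an entropy-integral argument for the local-martingale-indexed process, then a delicate normalisation to show the resulting $K_R$ can be summed/patched into a single $K$ with all moments. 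None of this is written down, and it is precisely the hard part, so as it stands the proof has a gap.

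The paper sidesteps the whole difficulty by interpreting $M^h=\int_0^\cdot f_1(\widehat h_r,r)\,d\bigl(\int_0^r f_2(\widehat W^H_u,u)\,dW_u\bigr)$ as a pathwise Young integral: the deterministic integrand $f_1(\widehat h_\cdot,\cdot)$ has finite $q$-variation bounded by $|G(c\|h\|_\h)|(\|h\|_\h+1)$ (via the Besov--variation embedding of $\widehat h\in\Cc^H$), while the integrator $\int_0^\cdot f_2(\widehat W^H_r,r)\,dW_r$ is a.s.\ $1/p$-H\"older, hence of finite $p$-variation, with $1/p+1/q>1$. Young's inequality then produces a pathwise bound whose random prefactor $K$ is the $p$-variation norm of the fixed ($h$-independent) integrator, which has moments of all orders. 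Because the $h$-dependence sits entirely in the deterministic variation norm of the Young integrand, the estimate is automatically uniform over $h\in\h$ with a single $K$, which is exactly the point your It\^o-calculus approach would have to reconstruct by hand through chaining. If you want to keep your BDG route, you would need to either supply the entropy/chaining argument in full or abandon pointwise-in-$h$ moment bounds in favour of a pathwise (Young or rough) integration bound as the paper does.
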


\noindent The following is the main result of this section: 
\begin{thm}\label{thm:TCIRV1} 
Let $H\in(0, \half], \kappa\in (0, H), M=\Mf(H,\kappa)$ (as in~\eqref{Mchoice}) and $g^+=g\vee 0$.
Then
\begin{enumerate}
    \item Let $c_\h:\mathcal{M}_T(\Tt)\times\mathcal{M}_T(\Tt)\rightarrow\R$ denote the Cameron-Martin pseudo-metric:
\begin{equation}\label{CMpseudometric}
c_\h(\Pi^1,\Pi^2)=
\begin{dcases}
\|\Pi^{1}_0\Xi-\Pi^{2}_0\Xi\|_{\h},
& \text{if }\Pi_0^{1}\Xi-\Pi_0^{2}\Xi\in\h,\\
\infty, 
& \text{otherwise}.
\end{dcases}
\end{equation}
The law $\mu\in\mathscr{P}( \mathcal{M}_T(\Tt))$ of the It\^o model $(\Pi^\xi, \Gamma^\xi)$ satisfies $\Tt_2(2)$ with respect to~$c_\h$.
\item  
The law  $\mu\in\mathscr{P}( \mathcal{M}_T(\Tt))$ of the It\^o model $(\Pi^\xi, \Gamma^\xi)$ satisfies $\mathscr{T}_\alpha(c)$ with 
$\alpha(t)= t^2\wedge t^{2(M+1)}$
and 
$c(\Pi^1,\Pi^2):=\|\Pi^1-\Pi^2\|^{\frac{1}{M+1}}$.
    \item Let $\gamma\in (0, \half)$ and~$f$ satisfy Assumption~\ref{assu:f} with $G(x)=|x|\vee|x|^r$ for some $r\geq 1$.
    The law $\mu\in\mathscr{P}(\Cc^\gamma[0,T])$ of the driftless log-price $\int_{0}^{\cdot}f(\widehat{W}^H_r,r)dW_r$ satisfies $\mathscr{T}_\alpha(c)$  with $\alpha(t)= t^2\wedge t^{2(r+1)}$ and $c(x,y)=\|x-y\|_{\Cc^\gamma}^{\frac{1}{r+1}}$.
\item Let $f(t,x)=g(t)e^x$ for some $g\in \Cc^1[0,T]$.
Then, for $C>0$, the law $\mu\in\mathscr{P}(C[0,T])$ of $(\log|\int_{0}^{\cdot}f(\widehat{W}^H_r,r)dW_r|)^+$ satisfies $\mathscr{T}_1(C)$.
\end{enumerate}
\end{thm}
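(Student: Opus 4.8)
The plan is to reduce the exponential case to a $1$-TCI via the generalised Fernique theorem of~\cite{friz2010generalized}, exactly as flagged in the introduction. Write $\Psi(\omega) := \int_0^\cdot g(r)\exp(\widehat{W}^H_r)\,dW_r$ and let $F(\omega) := \big(\log|\Psi(\omega)|\big)^+$, viewed as a $C[0,T]$-valued (in fact $\R$-valued after taking a sup over $[0,T]$, but it is cleanest to keep the path) map on the abstract Wiener space $(E,\h,\mu)$ of Remark~\ref{rem:RVabstractWiener}. The key structural input is Lemma~\ref{Itoshiftlem}: with $f(t,x)=g(t)\mathrm{e}^x$ we have $f(x,t)-f(y,t) = g(t)\mathrm{e}^y(\mathrm{e}^{x-y}-1)$, so Assumption~\ref{assu:f} holds with $f_1(u,t)=g(t)(\mathrm{e}^u-1)$, $f_2(y,t)=\mathrm{e}^y$, and $|\nabla f_1|\le \|g\|_\infty \mathrm{e}^{|u|} =: G(u)$, which is nondecreasing. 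Hence Lemma~\ref{Itoshiftlem} gives, for fixed $\gamma<\tfrac12$, a random variable $K\in\cap_{p\ge1}L^p(\Omega)$ and $c>0$ with
\begin{equation}
\big\|\Psi(\omega+h)-\Psi(\omega)\big\|_{\Cc^\gamma[0,T]} \le K(\omega)\,\|g\|_\infty\,\mathrm{e}^{c\|h\|_\h}\big(\|h\|_\h+1\big),
\qquad \pr\text{-a.s., for all }h\in\h.
\end{equation}

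Next I would turn this into an $\h$-continuity estimate for $F$. Using $\Cc^\gamma[0,T]\hookrightarrow C[0,T]$ and $\sup$-norm, together with the elementary bound $|(\log|a|)^+-(\log|b|)^+|\le |\log|a|-\log|b||$ and the mean value theorem for $\log$, one bounds $|F(\omega+h)-F(\omega)|$ by $\|\Psi(\omega+h)-\Psi(\omega)\|_\infty$ divided by $\min(|\Psi(\omega+h)|,|\Psi(\omega)|)$ on the relevant event; more robustly, one exploits the translation that $\Psi(\omega+h)-\Psi(\omega)$ is \emph{deterministic given $\omega$} is false here, so instead I would follow the Fernique-type scheme of~\cite{friz2010generalized} directly: it suffices to produce a measurable $\Theta:E\to\R^+$ with $\Theta<\infty$ a.s., a null set off which $F(\omega)\le \Theta(\omega) + \rho(\|h\|_\h)$ for all $h$ in a fixed ball once $F(\omega+h)$ is small, i.e. an estimate of the form $F(\omega) \le F(\omega+h) + A(\omega+h)\,\psi(\|h\|_\h)$ with $A\in\cap_p L^p$ and $\psi$ superlinear, which is precisely what the displayed bound yields after dividing by $|\Psi(\omega+h)|$ and noting $\mathrm{e}^{c\|h\|_\h}$ is dominated by adjusting constants on the fixed ball used by the Fernique argument. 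The generalised Fernique theorem then upgrades the a.s. finiteness of $F$ to Gaussian integrability: $\ex[\exp(\lambda F^2)]<\infty$ for some $\lambda>0$, equivalently $F$ has Gaussian concentration.

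Finally, Gaussian concentration of $F$ on the Wiener space is equivalent (by the Bobkov--G\"otze / Djellout--Guillin--Wu characterisation cited in the introduction, \cite{bobkov1999exponential,djellout2004transportation}) to the pushforward law $\mu\circ F^{-1}\in\mathscr{P}(\R)$, hence the law on $C[0,T]$ of $(\log|\Psi|)^+$, satisfying $\mathscr{T}_1(C)$ for some $C>0$ with respect to the sup-metric — which is the claim. Alternatively, and perhaps more in the spirit of the paper's contraction philosophy, one can feed the $\h$-Lipschitz-modulo-exponential-weight estimate into a weighted contraction principle and then invoke Herbst's argument; but the Fernique route is the shortest. \textbf{The main obstacle} I anticipate is the division by $|\Psi(\omega+h)|$: unlike the polynomial case, $\Psi$ is a genuine It\^o integral that can vanish or be tiny, so one must verify that the Fernique iteration only ever needs the estimate on the event $\{F(\omega+h)\le R\}$ for a fixed threshold $R$, on which $|\Psi(\omega+h)|$ is bounded below, so that the "weight" $\mathrm{e}^{c\|h\|_\h}/|\Psi(\omega+h)|$ stays integrable and the superlinear term $\psi$ can be taken e.g. $\psi(t)=t\,\mathrm{e}^{ct}$, still compatible with the hypotheses of~\cite{friz2010generalized} after restricting $h$ to a ball whose radius is chosen by the argument.
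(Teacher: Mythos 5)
Your proposal identifies the right tool (the generalised Fernique theorem) and correctly checks that $f(t,x)=g(t)\mathrm{e}^x$ falls into the framework of Assumption~\ref{assu:f} with an exponential modulus $G$. However, the route you take from the $\h$-shift estimate to the hypotheses of Fernique has a genuine gap, and it is exactly the one you flag as the ``main obstacle'': you try to obtain a two-sided estimate on $F(\omega+h)-F(\omega)$, with $F=(\log|\Psi|)^+$, by bounding $\Psi(\omega+h)-\Psi(\omega)$ and then dividing through by $|\Psi|$. Since $\Psi$ is an It\^o integral that vanishes at $t=0$ and can be arbitrarily small uniformly in $t$, this division cannot be controlled on an $\omega$-set of positive measure, and the attempted repair of restricting to $\{F(\omega+h)\le R\}$ does not feed cleanly into the Fernique iteration. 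A second, more structural, issue is that you propose to use a \emph{superlinear} modulus $\psi(t)=t\,\mathrm{e}^{ct}$: the generalised Fernique theorem requires a bound of the form $F(\omega+h)\le a+\sigma\|h\|_\h$ that is \emph{linear} in $\|h\|_\h$ on a set $\{\omega\in A\}$ of positive measure; a superlinear modulus does not yield Gaussian tails.

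The paper's proof sidesteps both problems by never forming a difference of $\Psi$'s nor dividing by $|\Psi|$. Instead it estimates the shifted integral $\Psi(\omega+h)$ \emph{directly}, using the same Young/BDG bookkeeping as in the proof of Lemma~\ref{Itoshiftlem}, to obtain the multiplicative bound
\begin{equation}
\|\Psi(\omega+h)\|_{\Cc^0[0,T]}\le C_1\,K(\omega)\,\mathrm{e}^{C_2\|h\|_\h},
\end{equation}
with $K\in\bigcap_{p\ge1}L^p$. Applying $(\log|\cdot|)^+$ to this bound is the key step: the exponential in $\|h\|_\h$ is \emph{linearised} and the product $C_1K$ is absorbed additively, giving
\begin{equation}
\Phi(\omega+h)=\big(\log|\Psi(\omega+h)|\big)^+\le \log(C_1K(\omega)+1)+C_2\|h\|_\h,
\end{equation}
which is precisely the one-sided, $\|h\|_\h$-linear shift estimate that the generalised Fernique theorem of \cite{friz2020course} needs (take $A=\{K\le M\}$ for large $M$ to get a constant offset on a set of positive measure). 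Gaussian tails for $\Phi$ then follow, and the equivalence with $\mathscr{T}_1(C)$ is \cite[Theorem~1.13]{10.1214/ECP.v11-1198}. The moral: for a one-sided Fernique argument you should estimate $\Psi(\omega+h)$, not $\Psi(\omega+h)-\Psi(\omega)$; and the logarithm in the definition of $\Phi$ is not a nuisance to be untangled via a mean-value inequality but the mechanism that converts exponential growth into the linear growth Fernique demands.
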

\begin{proof}
 Throughout the proof, $(E, \h, \pr)$ is the abstract Wiener space of $\xi$
(Remark~\ref{seprem}), where $\pr\in\mathscr{P}(E)$ is the law of~$\xi$.
\begin{enumerate}
    \item By definition of~$c_\h$ we have 
$$
\inf_{\pi\in\Pi(\nu,\mu)}\iint_{\mathcal{M}_T(\Tt)\times\mathcal{M}_T(\Tt)}c^2_\h(y_1, y_2)d\pi(y_1,y_2)
 = \inf_{\pi\in\Pi(\nu,\pr)}\iint_{E\times E}\|x_1-x_2\|^2_{\h}d\pi(x_1,x_2).
$$
Since $\pr\in\mathscr{T}_2(2)$ with respect to the Cameron-Martin distance $\|\cdot\|_{\h}$~\cite[Section 5.1]{djellout2004transportation}, the conclusion follows.
 \item Let $\mathcal{X}=E$, $\mathcal{Y}=\mathcal{M}_T(\Tt)$, $c_{\mathcal{X}}=c_{\h}, c_{\mathcal{Y}}=c$ and  $\Psi: \mathcal{X}\rightarrow \mathcal{Y}$ be the model lift $\xi\mapsto\Pi^\xi$. 
 Moreover, set $x_1=\xi$, $x_2=\xi+h$ for some $h\in\h$.
 In view of Lemma~\ref{shiftlem} with $h_1=0, h_2=h$, we have
\begin{equation}
\begin{aligned}
c_{\mathcal{Y}}\big(\Psi(x_1),\Psi(x_2)\big)
 = \big\|\Pi^{x_1}-\Pi^{x_2}\big\|^{\frac{1}{M+1}}
 & \leq C K(x_1)\left(\|x_2-x_1\|^{\frac{1}{M+1}}_{\h}\vee\|x_2-x_1\|_{\h}\right)\\
 &=C K(x_1)\bigg(c_{\mathcal{X}}(x_1,x_2)\vee c_{\mathcal{X}}(x_1,x_2)\big)^{\frac{1}{M+1}}\bigg),    
\end{aligned}
\end{equation}
where $K\in\bigcap_{p\geq 1}L^p(\mathcal{X}, \pr)$.
Lemma~\ref{Lem: contraction1} with $r=M+1$ concludes the proof 
(note that Assumption~\ref{assu:Setting} is satisfied since $\pr\in\mathscr{T}_2(2)$ with respect to~$c_\h)$.
\item Let $\mathcal{X}=\Cc^\gamma[0,T]=\mathcal{Y}$, $c_{\mathcal{X}}=c_{\h}$, 
$c_{\mathcal{Y}}=\|\cdot\|_{\Cc^\gamma}$ and  $\Psi: \mathcal{X}\rightarrow \mathcal{Y}$ 
be the map $\omega\mapsto\int f( \widehat{W}^H_r, r)dW_r$. 
Moreover, set $x_1=\omega$, $x_2=\omega+h$ for some $h\in\h$.
From Lemma~\ref{Itoshiftlem} with $h_1=0, h_2=h$ we obtain 
\begin{equation}
c_{\mathcal{Y}}\big(\Psi(x_1),\Psi(x_2)\big)
 \leq C K(x_1)\bigg(c_{\mathcal{X}}(x_1,x_2)\vee c_{\mathcal{X}}(x_1,x_2)\big)^{\frac{1}{r+1}}\bigg). 
\end{equation}
The conclusion follows once again by appealing to Lemma~\ref{Lem: contraction1}. 
\item Let $\mathcal{X}=\Cc^0[0,T]=\mathcal{Y}, c_{\mathcal{X}}=\|\cdot\|_{\Cc^0}$, $c_{\mathcal{Y}}, \Psi$ as in the proof above and fix $h\in\h$. Using a similar argument as in the proof of Lemma~\ref{Itoshiftlem} it follows that
\begin{equation}
    \|\Psi(\omega+h)\|_{\mathcal{X}}\leq C_1Ke^{C_2\|h\|_{\h}},
\end{equation}
where $C_1>0$ and $K\in \bigcap_{p\geq 1}L^p(\mathcal{X}, \pr)$ is a non-negative random variable. Letting $\Phi=(\log|\Psi|)^+$ the latter implies
\begin{equation}
    \|\Phi(\omega+h)\|_{\mathcal{X}}\leq \log(C_1K+1)+C_2\|h\|_{\h}.
\end{equation}
Appealing to the generalised Fernique theorem (\cite{friz2020course}, Theorem 11.7) we deduce that $\Phi$ has Gaussian tails and in particular that $\ex\exp(\lambda \|\Phi\|^2_{\mathcal{X}})<\infty$ for sufficiently small and positive values of~$\lambda$. 
The proof is complete by recalling that the latter is equivalent to Talagrand's $\mathscr{T}_1(C)$ (using~\cite[Theorem 1.13]{10.1214/ECP.v11-1198} with $\alpha(t)=t^2$).
\end{enumerate}
\end{proof}

\begin{rem} 
It is straightforward to replace the right-hand side of~\eqref{eq:assu f} by a finite sum of functions $f_{1,k}(x-y,t)f_{2,k}(x-y,t)$  that satisfy the same properties as~$f_1$ and~$f_2$. Lemma~\ref{Itoshiftlem} then holds with $G$ replaced by $\max\{G_k\}$.
Assumption~\ref{assu:f} covers a wide range of volatility functions that are of interest in rough volatility modeling. 
In particular it includes polynomial volatility models~\cite{jaber2022joint}, where $f(t,x)=g(t)P(x)$ 
for a $\Cc^1$-function $g$ and a polynomial~$P$ of degree~$r$, as well as exponential functions that correspond to lognormal volatility models (as in the rough Bergomi model~\cite{bayer2016pricing}).
\end{rem}

\begin{cor} 
Let $H\in(0, \half]$, $\gamma<\half$ and  $f\in C(\R\times\R^+)$ that satisfies Assumption~\ref{assu:f}.
Then there exist $C, s_0>0$ such that any iid sample $\{X_n; n\in\N\}$ of the driftless log-price $X_\cdot=\int_{0}^{\cdot}f(\widehat{W}^H_r,r)dW_r$ satisfies, for $n\in\N, s>s_0$, 
\begin{equation}
 \pr\left[ \left\|\frac{1}{n}\sum_{k=1}^{n}X_n\right\|_{\Cc^\gamma[0,T]} >s \right]
 \leq \exp\left\{-Cns^{\frac{2}{r+1}}\right\}.
\end{equation}
\end{cor}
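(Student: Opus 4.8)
The plan is to obtain this directly from the transportation-cost inequality of Theorem~\ref{thm:TCIRV1}(3) and the non-asymptotic deviation estimate of Corollary~\ref{cor:deviation}, the only additional ingredient being an elementary triangle-inequality reduction.

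First I would record the relevant TCI. Set $\mathcal{X}=\Cc^\gamma[0,T]$ with the metric $d(x,y)=\|x-y\|_{\Cc^\gamma}$ and let $\mu\in\mathscr{P}(\mathcal{X})$ be the law of the driftless log-price $X_\cdot=\int_0^\cdot f(\widehat W^H_r,r)\,dW_r$. Since $f$ satisfies Assumption~\ref{assu:f} (with $G(x)=|x|\vee|x|^r$, $r\ge1$, as in Theorem~\ref{thm:TCIRV1}(3)), that theorem gives $\mu\in\mathscr{T}_\alpha(d^p)$ with $p=\tfrac1{r+1}\in(0,1]$ and $\alpha(t)=t^2\wedge t^{2(r+1)}=\alpha_1(t)\wedge\alpha_2(t)$, where $\alpha_1(t)=t^2$ and $\alpha_2(t)=t^{2(r+1)}$. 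Next I would verify the hypotheses of Corollary~\ref{cor:deviation} at the base point $x_0=0$: the integrability $d(0,\cdot)=\|\cdot\|_{\Cc^\gamma}\in L^1(\mu)$ follows from Corollary~\ref{Cor:Exponential Moments}(i), since the finiteness of $\int_{\mathcal{X}}\exp\{s\|x\|^p_{\Cc^\gamma}\}\,d\mu(x)$ for every $s\ge0$ forces all moments of $\|\cdot\|^p_{\Cc^\gamma}$ — and in particular the first moment of $\|\cdot\|_{\Cc^\gamma}$ — to be finite. Corollary~\ref{cor:deviation} then yields $C_p,s_0>0$ such that
\[
\frac1n\log\pr\!\left[\frac1n\sum_{k=1}^n\|X_k\|_{\Cc^\gamma}\ge s\right]\le -C_p\,s^{2p}=-C_p\,s^{\frac2{r+1}},
\qquad n\in\N,\ s>s_0.
\]

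Finally comes the bridge from $\tfrac1n\sum_k\|X_k\|_{\Cc^\gamma}$ to $\big\|\tfrac1n\sum_kX_k\big\|_{\Cc^\gamma}$: for any realisation of the iid sample, convexity of the norm (the triangle inequality) gives $\big\|\tfrac1n\sum_{k=1}^nX_k\big\|_{\Cc^\gamma}\le\tfrac1n\sum_{k=1}^n\|X_k\|_{\Cc^\gamma}$, hence $\big\{\big\|\tfrac1n\sum_kX_k\big\|_{\Cc^\gamma}>s\big\}\subseteq\big\{\tfrac1n\sum_k\|X_k\|_{\Cc^\gamma}>s\big\}$; exponentiating the displayed bound and combining it with this inclusion gives the claim with $C:=C_p$. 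I do not expect a substantive obstacle here — the analysis is already packaged in Theorem~\ref{thm:TCIRV1}(3) and Corollary~\ref{cor:deviation}. The one point meriting a moment's care is that $\alpha(t)=t^2\wedge t^{2(r+1)}$ is sub-quadratic near the origin (there $\alpha(t)=t^{2(r+1)}$), so it is really the large-argument branch $\alpha(t)=t^2$ that enters the estimate of Corollary~\ref{cor:deviation} in the regime $s>s_0$; this is precisely the regime that corollary addresses, so the argument goes through without modification.
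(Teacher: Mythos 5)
Your overall approach is exactly the paper's: reduce via the triangle inequality to $\frac1n\sum_k\|X_k\|_{\Cc^\gamma}$, then invoke Theorem~\ref{thm:TCIRV1}(3) and Corollary~\ref{cor:deviation} with base point $x_0=0$ and cost exponent $p=\tfrac1{r+1}$. You also correctly read the cost exponent (the paper's ``$p=r+1$'' is a typo; the cost in Theorem~\ref{thm:TCIRV1}(3) is $\|\cdot\|_{\Cc^\gamma}^{1/(r+1)}$, and Corollary~\ref{cor:deviation} requires $p\in(0,1]$), and your remark about which branch of $\alpha$ is active for $s>s_0$ is a fair gloss on the proof of Corollary~\ref{cor:deviation}.

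There is one genuine slip, though, in the step where you verify the hypothesis $d(0,\cdot)=\|\cdot\|_{\Cc^\gamma}\in L^1(\mu)$. You derive it from the exponential integrability of Corollary~\ref{Cor:Exponential Moments}(i), but that corollary carries $d(x_0,\cdot)\in L^1(\mu)$ as a \emph{standing hypothesis} (it is used in its proof to show $P_cf\in L^1(\mu)$ before Proposition~\ref{alphaprop}(ii) can be applied), so invoking its conclusion to establish that same hypothesis is circular. The fix is to verify the first moment directly: under Assumption~\ref{assu:f} the integrand $f(\widehat W^H,\cdot)$ has moments of all orders (Gaussian tails of $\widehat W^H$, polynomial control from Assumption~\ref{assu:f}), so a BDG/Kolmogorov continuity estimate gives $\ex\|X\|_{\Cc^\gamma}<\infty$; equivalently one can read this off from the $h=0$ case of the bounds underlying Lemma~\ref{Itoshiftlem}, where the prefactor $K$ has all moments finite. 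The paper itself simply elides this routine verification, so once you replace the circular citation with the direct argument, your proof is complete and coincides with the paper's.
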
	
\begin{proof} By the triangle inequality,
$$
\pr\left[ \left\|\frac{1}{n}\sum_{k=1}^{n}X_n\right\|_{\Cc^\gamma[0,T]} >s \right]  \leq \pr\left[ \frac{1}{n}\sum_{k=1}^{n}\left\|X_n\right\|_{\Cc^\gamma[0,T]} >s \right].
$$ The conclusion then follows from Theorem~\ref{thm:TCIRV1}(3) and Corollary~\ref{cor:deviation} with $d$ being the $\gamma$-H\"older metric, $x_0=0$ and $p=r+1$.
\end{proof}

\subsection{TCI for the log-price as a modelled distribution}\label{subsec: Modelled}
Given a fixed regularity structure~$\Tt$, along with a model $(\Pi, \Gamma)\in\mathcal{M}(\Tt)$, 
it is possible to consider $\Tt$-valued functions (or distributions) that can be approximated, up to a given precision, by abstract "Taylor polynomials" consisting of symbols in $\Tt$.
The regularity of such functions can be expressed in terms of the approximation error, similar to the case of classical polynomials and smooth functions.
This concept is captured by the notion of modelled distributions.

\begin{dfn}
Let $\gamma>0$, $\Tt$ the rough volatility regularity structure and $(\Pi, \Gamma)\in\mathcal{M}_T(\Tt)$.
A map $f:[0,T]\rightarrow\Tt$ is said to be a modelled distibution of order $\gamma$ (written $f\in \mathcal{D}_T^\gamma$) if 
\begin{equation}\label{eq:Dgammanorm}
\|f\|_{\mathcal{D}_T^\gamma}:=\sup_{t\in[0,T]}\sup_{ A\ni\beta<\gamma}\big| f(t)\big|_{\beta}+
\sup_{\substack{s\neq t\in[0,T]\\A\ni\beta<\gamma}}\frac{\big| f(t)-\Gamma_{t,s}f(s)\big|_{\beta}}{|t-s|^{\gamma-\beta}}<\infty,
\end{equation}
where $A$ is the index set of~$\Tt$ and , as above, $|\tau|_{\beta}$ denotes the absolute value of the coefficient of $\tau\in\Tt$ with degree~$\beta$. 
To emphasise the dependence of the space~$\mathcal{D}_T^\gamma$ on the given model, we shall often use the notation $\mathcal{D}_T^\gamma(\Gamma)$.
Finally, for $i=1,2$,
$(\Pi^i, \Gamma^i)\in\mathcal{M}_T(\Tt)$, $f_i\in\mathcal{D}_T^\gamma(\Gamma^i)$, 
the distance between~$f_1$ and~$f_2$ is defined by
\begin{equation}\label{eq:modelleddistance}
\big\|f_1;f_2\big\|_{\mathcal{D}_T^\gamma(\Gamma^1), \mathcal{D}_T^\gamma(\Gamma^2) }:=\sup_{t\in[0,T]}\sup_{ A\ni\beta<\gamma}\big| f_1(t)-f_2(t)\big|_{\beta}+
\sup_{\substack{s\neq t\in[0,T]\\A\ni\beta<\gamma}}\frac{\big| f_1(t)-\Gamma^1_{t,s}f_1(s)-f_2(t)+\Gamma^2_{t,s}f_2(s)\big|_{\beta}}{|t-s|^{\gamma-\beta}}.
\end{equation}	
\end{dfn}

\begin{rem}\label{rem: Modeltop}
    For a fixed model $(\Pi, \Gamma)\in\mathcal{M}_{T}(\Tt)$, the space $\mathcal{D}_T^\gamma(\Gamma^i)$ is linear and is a Banach space with respect to the norm $\|\cdot\|_{\mathcal{D}_T^\gamma(\Gamma)}$ (in general, when for non-compact domain, one obtains a Fr\'echet space with respect to a family of seminorms parameterised by compacta).
\end{rem}

At this point, we have defined the rough volatility regularity structure $\Tt$ (Section~\ref{rvolrs}) and studied properties of the It\^o model and 
 driftless log-price (Sections~\ref{shiftsec}). 
One crucial feature of the theory is roughly stated as follows: Once lifted to a (random) modelled distribution, the price process can be expressed as a locally-Lipschitz continuous map of the underlying noise with probability~$1$.
The latter is done with the aid of Hairer's reconstruction operator $\mathscr{R}$ which maps modelled distributions to Schwartz distributions or functions. These operations can be summarised by the diagramme
\begin{center}
	\begin{tikzcd}[column sep=huge, row sep=large]
		\mathcal{M}_{T}(\Tt)\ni \Pi^\xi\arrow[r, "\mathscr{D}_f" ] 
		& f^{\Pi^\xi}\in\mathcal{D}_T^\gamma(\Gamma^\xi) \arrow[d, "\mathscr{R}"] \\
		E\ni\xi \arrow[r, dashrightarrow]\arrow[u, "\mathscr{L}"]
		&
		X\in C([0,T];\R)
	\end{tikzcd}
\end{center}
where $X$, the dashed arrow and $\mathscr{L}$ denote the driftless log-price process $\int fdW$, (as in~\eqref{roughvolmodel}),
the (It\^o) solution map and the model lift (Definition~\ref{def:modeltranslation}).
The rest of this section is devoted to the study of the map~$\mathscr{D}_f$,
which takes as input a (random) It\^o model and returns an expansion of the volatility function as a linear combination of elements of~$\mathscr{T}$.
This (random) modelled distribution has the property that
(at least in a local sense)~\cite[Lemma~3.23]{bayer2020regularity}
\begin{equation}\label{eq:reconstruction}
    \mathscr{R}\mathscr{D}_f(\Pi^\xi)=X,
\end{equation}
and both maps $\mathscr{R}$ and $\mathscr{D}_f$ are continuous with respect to the underlying model in the topology of $\mathcal{M}_{T}(\Tt)$. The main result of this section is Theorem~\ref{thm:TCImodelled}.
\begin{dfn}\label{Ddef} 
Let $T>0, H\in(0, \half], \kappa\in(0, H)$, $M=\Mf(H,\kappa)$ as in~\eqref{Mchoice} and $\Tt$ be the rough volatility regularity structure.
For $f\in \Cc^{M+1}(\R\times\R^+)$ and $\Pi\in\mathcal{M}_{T}(\Tt)$, let $f^\Pi$ be the modelled distribution (with $*$ denoting convolution)
$$
f^\Pi(t):=\sum_{k=0}^{M}\frac{1}{k!}\partial^k_1f\big((K^H*\Pi_t\Xi)(t),t\big)I(\Xi)^k,
\quad\text{for } t\in[0,T].
$$
The map $\mathscr{D}_f$ is then defined by ($\star$ denotes the formal product of symbols in $\Tt$)
$$\mathcal{M}_{T}(\Tt)\ni\Pi \longmapsto \mathscr{D}_f(\Pi):=f^\Pi\star\Xi\in\bigcup_{\gamma>0}\mathcal{D}_T^\gamma(\Gamma).
$$
\end{dfn}

\noindent The following lemma is used to obtain Theorem~\ref{thm:TCImodelled} below and its proof can be found in Appendix~\ref{Subsection: modelbndlem}. Regarding notation, we use $\partial_{1,2}^{m,n}f$ to denote the $m$-th (resp. $n$-th) order partial derivatives with respect to the first (resp. second) variable of a function $f\in  \Cc^{M+1, 1}(\R\times\R^+)$.

\begin{lem}\label{modelbndlem} 
Let 
$f\in \Cc^{M+1, 1}(\R\times\R^+)$, and $0<\gamma<((M+1)(H-\kappa)-\half-\kappa)\wedge(\half-\kappa)$.
\begin{enumerate}
  \item If $G:\R\rightarrow\R$ is a non-decreasing continuous function such that for some $C_{f,T}>0$,
 \begin{equation}\label{fgrowth}
\sup_{s\in[0,T]}\left|\partial_1^{m}f(x,s)\right| + \sup_{s\in[0,T]}\left|\partial_1^{M+1}f(x,s)\right|
 + \sup_{s\in[0,T]}\left|\partial^{m,1}_{1,2}f\big(x,s\big)\right|
 \leq C_{f,T}(1+G(|x|))
\end{equation}
holds for all $x\in\R$, 
 $m=0,\dots, M$,	
then there exists $C_{f,T,M}>0$ such that
\begin{equation*}
\|\mathscr{D}_f(\Pi)\big\|_{\mathcal{D}_T^\gamma}\leq 2C_{f,T, M}\bigg[  1+G\bigg(4\big\|  K^H*\Pi\Xi\big\|_{C[0,T]}  \bigg)\bigg]\bigg(1\vee\big\|K^H*\Pi\Xi\big\|^{M+1}_{\Cc^{H-\kappa}[0,T]}\bigg).
\end{equation*}
\item Let $N>0$, $P:\R^2\rightarrow\R$ a polynomial of degree~$N$ in two variables with $P(0,0)=0$ and assume that there exists $c_{f,T}>0$ such that for all $m=0,\dots, M+1$,  $x,y\in\R$,
\begin{equation}\label{unicontcondition}
\sup_{s\in[0,T]}\left|\partial_1^{m}f(x,s)-\partial_1^{m}f(y,s)\right|
 + \sup_{s\in[0,T]}\left|\partial^{m,1}_{1,2}f(x,s)-\partial^{m,1}_{1,2}f(y,s)\right|\leq c_{f,T}P(|y|, |x-y|).
	\end{equation}
Then for $(\Pi^1, \Gamma^1), (\Pi^2, \Gamma^2)\in\mathcal{M}_T(\Tt)$, 
 there exists $C_{M,f,T}>0$ such that
\begin{align*}
\|\mathscr{D}_f(\Pi^1)&;\mathscr{D}_f(\Pi^2)\big\|_{\mathcal{D}_T^\gamma(\Gamma^1), \mathcal{D}_T^\gamma(\Gamma^2) } \leq  C_{f,M, T} 
 \bigg(\{1\vee \big\|K^H*\Pi^1\Xi\big\|_{\Cc^{H-\kappa}[0,T]}^{N+M+1}  \bigg)\\
 &
\times\bigg(\big\|K^H*\Pi^2\Xi-K^H*\Pi^1\Xi\big\|_{\Cc^{H-\kappa}[0,T]}\vee \big\|K^H*\Pi^2\Xi-K^H\ast\Pi^1\Xi\big\|^{N+M+1}_{\Cc^{H-\kappa}[0,T]}\bigg).
	\end{align*}
\end{enumerate}
\end{lem}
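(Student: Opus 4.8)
The plan is to unwind the two norms \eqref{eq:Dgammanorm} and \eqref{eq:modelleddistance} and reduce everything to Taylor-expansion bookkeeping. Set $x_t:=(K^H\ast\Pi_t\Xi)(t)$; since $\Gamma_{s,t}\Xi=\Xi$ forces $\Pi_t\Xi$ to be independent of $t$, one has $\|x\|_{\Cc^{H-\kappa}[0,T]}=\|K^H\ast\Pi\Xi\|_{\Cc^{H-\kappa}[0,T]}$, and by Definition~\ref{Ddef}, $\mathscr{D}_f(\Pi)(t)=\sum_{k=0}^M\tfrac1{k!}\partial_1^k f(x_t,t)\,\Xi I(\Xi)^k$, so the only symbols occurring are $\Xi I(\Xi)^k$, $0\le k\le M$, of degree $\beta_k:=k(H-\kappa)-\half-\kappa\le M(H-\kappa)-\half-\kappa\le0<\gamma$ by \eqref{Mchoice}. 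Using \eqref{eq:structuregroup1}--\eqref{eq:structuregroup} one computes $\Gamma_{t,s}\,\Xi I(\Xi)^k=\sum_{\ell=0}^k\binom k\ell(x_t-x_s)^{k-\ell}\Xi I(\Xi)^\ell$, so the coefficient of $\Xi I(\Xi)^k$ in $\mathscr{D}_f(\Pi)(t)-\Gamma_{t,s}\mathscr{D}_f(\Pi)(s)$ is $\tfrac1{k!}R_k(t,s)$ with
\[
R_k(t,s)=\partial_1^k f(x_t,t)-\sum_{\ell=0}^{M-k}\frac1{\ell!}\,\partial_1^{k+\ell}f(x_s,s)\,(x_t-x_s)^\ell,
\]
which I will split as $[\partial_1^kf(x_t,t)-\partial_1^kf(x_t,s)]$ (a time increment) plus the order-$(M-k)$ spatial Taylor remainder of $x\mapsto\partial_1^kf(x,s)$, based at $x_s$ and evaluated at $x_t$.

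For part (1), the local part of $\|\mathscr{D}_f(\Pi)\|_{\mathcal{D}_T^\gamma}$ is controlled by $\sup_t|\partial_1^kf(x_t,t)|\le C_{f,T}\big(1+G(\|K^H\ast\Pi\Xi\|_{C[0,T]})\big)$ via \eqref{fgrowth}. In the increment part, the time increment is $\le C_{f,T}\big(1+G(\|K^H\ast\Pi\Xi\|_{C[0,T]})\big)|t-s|$, while Taylor's formula with integral remainder bounds the spatial remainder by $\tfrac{|x_t-x_s|^{M-k+1}}{(M-k)!}\int_0^1(1-\theta)^{M-k}|\partial_1^{M+1}f(x_s+\theta(x_t-x_s),s)|\,d\theta$, which by \eqref{fgrowth}, monotonicity of $G$ and a crude bound on the argument is $\le C_{f,T,M}\big(1+G(4\|K^H\ast\Pi\Xi\|_{C[0,T]})\big)|x_t-x_s|^{M-k+1}$. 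Feeding in $|x_t-x_s|\le\|K^H\ast\Pi\Xi\|_{\Cc^{H-\kappa}[0,T]}|t-s|^{H-\kappa}$ and $|t-s|\le T$, both increment contributions carry a power $|t-s|^a$ with $a\ge\gamma-\beta_k$: indeed $(M-k+1)(H-\kappa)\ge\gamma-\beta_k$ is precisely $\gamma\le(M+1)(H-\kappa)-\half-\kappa$, while $1\ge\gamma-\beta_k$ follows from $\gamma<\half-\kappa$ (since then $\gamma-\beta_k<1-k(H-\kappa)\le1$). Dividing by $|t-s|^{\gamma-\beta_k}$ therefore costs only a power of $T$; collecting the powers of $\|K^H\ast\Pi\Xi\|_{\Cc^{H-\kappa}[0,T]}$ (at most $M+1$, whence the factor $1\vee\|\cdot\|^{M+1}$) and summing over $k\le M$ gives the claim.

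For part (2), with $x^i_t:=(K^H\ast\Pi^i_t\Xi)(t)$, the local part of $\|\mathscr{D}_f(\Pi^1);\mathscr{D}_f(\Pi^2)\|$ is $\le\sup_t|\partial_1^kf(x^1_t,t)-\partial_1^kf(x^2_t,t)|\le c_{f,T}\sup_tP(|x^2_t|,|x^1_t-x^2_t|)$ by \eqref{unicontcondition}; since $P$ has degree $N$ and (as in all the applications, cf.\ the examples in the remark following Theorem~\ref{thm:TCIRV1}) vanishes when its second argument does, $P(a,b)\le C(1\vee a)^N(b\vee b^N)$, and bounding $|x^2_t|\le\|K^H\ast\Pi^1\Xi\|_{C[0,T]}+\|K^H\ast(\Pi^1-\Pi^2)\Xi\|_{C[0,T]}$ puts this in the required form. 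For the increment part I must estimate $R^1_k(t,s)-R^2_k(t,s)$: the time-increment pieces differ by $\int_s^t\big[\partial^{k,1}_{1,2}f(x^1_t,u)-\partial^{k,1}_{1,2}f(x^2_t,u)\big]du$, bounded via \eqref{unicontcondition} by $|t-s|\,c_{f,T}P(|x^2_t|,|x^1_t-x^2_t|)$; the spatial-remainder pieces are two Taylor remainders of $x\mapsto\partial_1^kf(x,s)$, based at $x^1_s$, resp.\ $x^2_s$, evaluated at $x^1_t$, resp.\ $x^2_t$, and writing each in integral form and subtracting gives
\begin{align*}
&\tfrac{(x^1_t-x^1_s)^{M-k+1}-(x^2_t-x^2_s)^{M-k+1}}{(M-k)!}\!\int_0^1\!(1-\theta)^{M-k}\partial_1^{M+1}f\big(x^1_s+\theta(x^1_t-x^1_s),s\big)\,d\theta\\
&\quad{}+\tfrac{(x^2_t-x^2_s)^{M-k+1}}{(M-k)!}\!\int_0^1\!(1-\theta)^{M-k}\Big[\partial_1^{M+1}f\big(x^1_s+\theta(x^1_t-x^1_s),s\big)-\partial_1^{M+1}f\big(x^2_s+\theta(x^2_t-x^2_s),s\big)\Big]d\theta.
\end{align*}
In the first line I use $|a^m-b^m|\le m(|a|\vee|b|)^{m-1}|a-b|$ with $a=x^1_t-x^1_s$, $b=x^2_t-x^2_s$, so $|a-b|$ is an increment of $K^H\ast(\Pi^1-\Pi^2)\Xi$ and $|a|\vee|b|$ an increment of $K^H\ast\Pi^i\Xi$, each $\lesssim(\cdot)|t-s|^{H-\kappa}$, together with the pointwise bound $|\partial_1^{M+1}f|\lesssim(1\vee\|K^H\ast\Pi^1\Xi\|_{C[0,T]})^N$ obtained from \eqref{unicontcondition} with second argument $0$ and $f\in\Cc^{M+1,1}$; in the second line \eqref{unicontcondition} bounds the bracket by $c_{f,T}P$ of quantities $\lesssim\|K^H\ast(\Pi^1-\Pi^2)\Xi\|_{\Cc^{H-\kappa}[0,T]}$ and $\lesssim\|K^H\ast\Pi^2\Xi\|_{\Cc^{H-\kappa}[0,T]}$. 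In every term the net power of $|t-s|$ is $(M-k+1)(H-\kappa)\ge\gamma-\beta_k$, so dividing by $|t-s|^{\gamma-\beta_k}$ again costs only a power of $T$; converting $\|K^H\ast\Pi^2\Xi\|$-powers to $\|K^H\ast\Pi^1\Xi\|$ and $\|K^H\ast(\Pi^1-\Pi^2)\Xi\|$ via the triangle inequality, collecting powers (at most $N+M+1$) and summing over $k$ yields the stated estimate.

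The hard part is the second displayed line above — the difference of two Taylor remainders of one and the same function, taken at two different base points and two different evaluation points, with only a polynomial-type modulus of continuity for $\partial_1^{M+1}f$ (and the $\partial^{m,1}_{1,2}f$) at our disposal. Everything else is routine: repeated triangle inequalities, $|t-s|\le T$, the Hölder estimate $|x_t-x_s|\le\|K^H\ast\Pi\Xi\|_{\Cc^{H-\kappa}[0,T]}|t-s|^{H-\kappa}$, and the elementary bounds $|a^m-b^m|\le m(|a|\vee|b|)^{m-1}|a-b|$ and $P(a,b)\le C(1\vee a)^N(b\vee b^N)$ — but it must be carried out so that every power of $|t-s|$ stays at least $\gamma-\beta_k$, which is exactly what forces the two-sided constraint $0<\gamma<\big((M+1)(H-\kappa)-\half-\kappa\big)\wedge(\half-\kappa)$. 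One could alternatively invoke a general composition estimate for modelled distributions, but the hands-on computation is what makes the sharp dependence on $G$, $P$, $N$ and on $\|K^H\ast\Pi\Xi\|$ transparent.
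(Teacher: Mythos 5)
Your proposal is correct and takes essentially the same route as the paper's: Taylor expansion of the coefficients of $\mathscr{D}_f(\Pi)$, split of the remainder into a time increment and a spatial Taylor remainder, and power-counting in $|t-s|$ against the degrees $\beta_k=k(H-\kappa)-\half-\kappa$. The one genuine organisational difference is that you compute the coefficient $R_k(t,s)$ of $\Xi I(\Xi)^k$ in $\mathscr{D}_f(\Pi)(t)-\Gamma_{t,s}\mathscr{D}_f(\Pi)(s)$ \emph{directly} and identify it as a single Taylor remainder of $\partial_1^kf(\cdot,s)$, whereas the paper first isolates intermediate remainders $\tilde R_k(t,s)$ attached to $(\Gamma_{t,s}I(\Xi))^k$ and then has to re-expand $(\Gamma_{t,s}I(\Xi))^k=\sum_\ell\binom{k}{\ell}(\Pi_sI(\Xi)(t))^{k-\ell}I(\Xi)^\ell$ and re-collect the coefficient of each $I(\Xi)^\ell$ as a double sum $\sum_{k\ge\ell}\tilde R_k\binom{k}{\ell}(\Pi_sI(\Xi)(t))^{k-\ell}$. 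Your shortcut eliminates that re-collection step and keeps a single remainder per degree, which makes the power-counting more transparent; the constants and the final estimate are unchanged.

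One small point worth flagging is that your (and, implicitly, the paper's) bound $P(a,b)\lesssim(1\vee a)^N(b\vee b^N)$ requires $P(a,0)\equiv 0$ — i.e.\ every monomial of $P$ contains at least one power of the second variable — and not merely $P(0,0)=0$ as stated in the hypothesis. You note this and observe that the polynomial moduli arising in the paper's applications do vanish when the second argument is zero, which is the right reading of the assumption. The remaining ingredients (the H\"older estimate $|x_t-x_s|\le\|K^H*\Pi\Xi\|_{\Cc^{H-\kappa}}|t-s|^{H-\kappa}$, the inequality $|a^m-b^m|\le m(|a|\vee|b|)^{m-1}|a-b|$, the triangle-inequality conversion of $\|K^H*\Pi^2\Xi\|$ into $\|K^H*\Pi^1\Xi\|$ plus the difference, and the verification that $\min\{1,(M+1-k)(H-\kappa)\}\ge\gamma-\beta_k$ under the two-sided constraint on $\gamma$) all check out and agree with the paper's computations.
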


The topology of  $\mathcal{D}_T^\gamma(\Gamma)$ (and the space itself) depends crucially on the choice of the (random) model. This implies in particular that the concept of a TCI for a $\mathcal{D}_T^\gamma(\Gamma)$-valued random element is not meaningful since both the latter and the space are random. Nevertheless, it is possible to obtain TCIs for random elements that take values on the \textit{total space}
\begin{equation}\label{eqref: MDspace}
    \mathcal{M}\ltimes\mathcal{D}^\gamma:=\coprod_{(\Pi, \Gamma)\in \mathcal{M}_T(\mathcal{T}) } \mathcal{D}_T^\gamma(\Gamma)=\bigcup_{(\Pi, \Gamma)\in \mathcal{M}_T(\mathcal{T})}\big\{(\Pi, \Gamma)\big\}\times \mathcal{D}_T^\gamma(\Gamma)
\end{equation}
A metric on this space can be defined with the help of the distance~\eqref{eq:modelleddistance}. In particular, we define $d^\flat_{\gamma}: (\mathcal{M}\ltimes\mathcal{D}^\gamma)\times (\mathcal{M}\ltimes\mathcal{D}^\gamma)\rightarrow [0,\infty]$ by
\begin{equation}\label{eq:flatmetric}
d^\flat_{\gamma}( f_1, f_2) := \left\VERT p(f_1)-p(f_2)\right\VERT+\|f_1;f_2\|_{\mathcal{D}_T^\gamma(p(f_1)),\mathcal{D}_T^\gamma(p(f_2))},
\end{equation}
where $p:\mathcal{M}\ltimes\mathcal{D}^\gamma\rightarrow \mathcal{M}_T(\mathcal{T})$ denotes the base space-projection of a modelled distribution to its corresponding model and $\left\VERT\cdot\right\VERT$ is the model metric~\eqref{threebar}.
\begin{lem} Let $\gamma>0$. The map $d^\flat_{\gamma}$ defines a metric on the total space $\mathcal{M}\ltimes\mathcal{D}^\gamma$.   
\end{lem}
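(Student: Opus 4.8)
The plan is to verify directly the axioms of a metric for the map $d^\flat_\gamma$ of~\eqref{eq:flatmetric}, exploiting that it is the sum of two pieces: the model metric $\VERT\cdot\VERT$ of~\eqref{threebar} evaluated at the base models $p(f_1),p(f_2)$, and the mixed modelled-distribution distance~\eqref{eq:modelleddistance}. Non-negativity is immediate, both summands being non-negative. Symmetry follows because $\VERT\cdot\VERT$ is manifestly symmetric, and because interchanging $f_1$ and $f_2$ in~\eqref{eq:modelleddistance} replaces each argument of the seminorm $|\cdot|_\beta$ by its negative, which does not alter $|\cdot|_\beta$. I would also record that, for genuine models and modelled distributions of order $\gamma$, both summands are finite --- the model metric by the a priori bounds of Definition~\ref{modeldef} and the mixed distance by~\eqref{eq:Dgammanorm} --- so that $d^\flat_\gamma$ is in fact a real-valued metric and not merely an extended one.

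For the triangle inequality the key point is that both pieces telescope. Given $f_1,f_2,f_3$ in the total space, $\VERT p(f_1)-p(f_3)\VERT\leq\VERT p(f_1)-p(f_2)\VERT+\VERT p(f_2)-p(f_3)\VERT$, because each of the two constituents of~\eqref{threebar} --- namely $\|\Pi^1-\Pi^2\|$ and the H\"older-type seminorm of the $\Gamma$-differences --- is a supremum of quantities that obey the triangle inequality after inserting $\pm\Pi^2$ (resp.\ $\pm\Gamma^2$), upon which one uses $\sup(A+B)\leq\sup A+\sup B$. For the mixed distance I would invoke the decomposition
\begin{align*}
f_1(t)-\Gamma^1_{t,s}f_1(s)-f_3(t)+\Gamma^3_{t,s}f_3(s)
&=\bigl(f_1(t)-\Gamma^1_{t,s}f_1(s)-f_2(t)+\Gamma^2_{t,s}f_2(s)\bigr)\\
&\quad+\bigl(f_2(t)-\Gamma^2_{t,s}f_2(s)-f_3(t)+\Gamma^3_{t,s}f_3(s)\bigr),
\end{align*}
together with $f_1(t)-f_3(t)=(f_1(t)-f_2(t))+(f_2(t)-f_3(t))$; applying the triangle inequality for $|\cdot|_\beta$ and then $\sup(A+B)\leq\sup A+\sup B$ to each of the two suprema in~\eqref{eq:modelleddistance} gives $\|f_1;f_3\|_{\mathcal{D}_T^\gamma(\Gamma^1),\mathcal{D}_T^\gamma(\Gamma^3)}\leq\|f_1;f_2\|_{\mathcal{D}_T^\gamma(\Gamma^1),\mathcal{D}_T^\gamma(\Gamma^2)}+\|f_2;f_3\|_{\mathcal{D}_T^\gamma(\Gamma^2),\mathcal{D}_T^\gamma(\Gamma^3)}$. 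Summing the two estimates yields the triangle inequality for $d^\flat_\gamma$.

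It remains to prove identity of indiscernibles. Clearly $d^\flat_\gamma(f,f)=0$. Conversely, if $d^\flat_\gamma(f_1,f_2)=0$ then both non-negative summands vanish. From $\VERT p(f_1)-p(f_2)\VERT=0$ I would conclude $p(f_1)=p(f_2)=:(\Pi,\Gamma)$: vanishing of the first term of~\eqref{threebar} forces $\langle(\Pi^1_s-\Pi^2_s)\tau,\phi^\lambda_s\rangle=0$ for every $\tau\in\mathcal{S}$, every $s\in[0,T]$ and every rescaled test function $\phi^\lambda_s$ of the class appearing in Definition~\ref{modeldef}, whence an approximate-identity/partition-of-unity argument shows that the distributions $\Pi^1_s\tau$ and $\Pi^2_s\tau$ agree against all of $C^\infty_c(\R)$, so $\Pi^1=\Pi^2$; vanishing of the second term forces the coefficients of $\Gamma^1_{t,s}\tau$ and $\Gamma^2_{t,s}\tau$ to coincide for all admissible $\tau,\beta,s,t$, so $\Gamma^1=\Gamma^2$. (Alternatively, one may simply invoke the standard fact that $\VERT\cdot\VERT$ separates models, see~\cite{hairer2014theory}.) Once the base models coincide, by linearity of $\Gamma_{t,s}$ the mixed distance~\eqref{eq:modelleddistance} collapses to $\|f_1-f_2\|_{\mathcal{D}_T^\gamma(\Gamma)}$, which by Remark~\ref{rem: Modeltop} is a genuine norm on $\mathcal{D}_T^\gamma(\Gamma)$; hence $f_1=f_2$ in $\mathcal{D}_T^\gamma(\Gamma)$, i.e.\ $f_1$ and $f_2$ are the same point of the total space~\eqref{eqref: MDspace}.

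I expect the only mildly nontrivial point to be this separation step --- confirming that $\VERT\cdot\VERT$ vanishes only on the diagonal of $\mathcal{M}_T(\Tt)$, which rests on the density of the rescaled bump functions $\phi^\lambda_s$ among test functions and on the defining re-expansion estimates; symmetry and the triangle inequality are entirely routine telescoping arguments, and finiteness of both summands is a direct consequence of the a priori bounds on models and on modelled distributions.
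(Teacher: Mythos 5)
Your proof is correct and follows essentially the same route as the paper: verify symmetry directly, reduce the separation property to the fact that $\left\VERT\cdot\right\VERT$ is a metric on models (so the base models coincide) and then to $\|\cdot;\cdot\|$ collapsing to the norm on $\mathcal{D}_T^\gamma(\Gamma)$, and prove the triangle inequality by the same telescoping insertion of an intermediate modelled distribution. The only difference is cosmetic: the paper simply invokes that $\left\VERT\cdot\right\VERT$ is a metric, whereas you additionally sketch why (density of rescaled bumps for the separation of $\Pi$, and $\sup$-over-telescoped-sums for its triangle inequality); this is a harmless elaboration, not a new argument.
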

\begin{proof} 
The fact that $d^\flat_{\gamma}$ is symmetric is immediate from its definition. Moreover, $d^\flat_{\gamma}=0$ if and only if $\left\VERT p(f_1)-p(f_2)\right\VERT=0$ and $\|f_1;f_2\|_{\mathcal{D}_T^\gamma(p(f_1)),\mathcal{D}_T^\gamma(p(f_2))}=0$.
Since $\left\VERT\cdot\right\VERT$ is a metric, the first equality is true if and only if $p(f_1)=p(f_2)$. The latter implies that the second equality holds if and only if $f_1=f_2$ 
(in this case $\|\cdot;\cdot\|$ is in fact a norm from Remark~\ref{rem: Modeltop}). 
It remains to show that $d^\flat_{\gamma}$ satisfies the triangle inequality. 
Since  $\left\VERT\cdot\right\VERT$ is a metric, we shall only show it for the second term in~\ref{eq:flatmetric}. Indeed for $f_1,f_2,f_3\in \mathcal{M}\ltimes\mathcal{D}^\gamma, \beta<\gamma, s\neq t\in[0,T]$,
\begin{equation}
    \big| f_1(t)-f_2(t)\big|_{\beta}\leq \big| f_1(t)-f_3(t)\big|_{\beta}+\big| f_2(t)-f_3(t)\big|_{\beta}
\end{equation}
and 
\begin{equation}
   \big| f_1(t)-\Gamma^1_{t,s}f_1(s)-f_2(t)+\Gamma^2_{t,s}f_2(s)\big|_{\beta}\leq \sum_{j=1}^{2}\big| f_j(t)-\Gamma^j_{t,s}f_j(s)-f_3(t)+\Gamma^3_{t,s}f_3(s)\big|_{\beta},
\end{equation}
and the conclusion follows after taking supremum in $s,t,\beta$.
\end{proof}

\begin{rem} Our notation for $d^\flat_\gamma$ is taken from~\cite[Definition~4.13]{varzaneh2022geometry}. There, an analogous metric has been introduced for the total space of controlled rough paths and called the "flat" metric. In that setting, the base space is given by a space of (geometric) rough paths and the fibres are spaces of controlled rough paths.
\end{rem}

\begin{thm}\label{thm:TCImodelled}
Let $\Tt$ be the rough volatility regularity structure, $(\Pi, \Gamma)$ be the It\^o model~\eqref{eq:Itomodel1}-\eqref{eq:Gamma}, $T>0$, $\kappa, H, M, f, \mathscr{D}_f$ as in Definition~\ref{Ddef},    $\gamma\in (0, (M+1)(H-\kappa)-\half-\kappa)\wedge(\half-\kappa))$ and $d^\flat_\gamma$ as in~\eqref{eq:flatmetric}. If~$f$ satisfies the assumptions of Lemma~\ref{modelbndlem}(2) then the law $\mu\in\mathscr{P}(\mathcal{M}\ltimes\mathcal{D}^\gamma)$ of the random element $(\Pi, \Gamma, \mathscr{D}_f(\Pi))$ satisfies $\mathscr{T}_\alpha(c)$ with $c(x,y)=d^\flat_\gamma(x,y)^{\frac{1}{N+M+1}}$ and, for some constant $C>0$, $\alpha(t)=C(t^{2(N+M+1)}\wedge t^2)$.
\end{thm}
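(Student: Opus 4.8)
The plan is to invoke the extended contraction principle of Lemma~\ref{Lem: contraction1}, in the spirit of the proof of Theorem~\ref{thm:TCIRV1}(2), but with target the total space of modelled distributions. Concretely, take $\mathcal{X}=E=\Cc^{-\half-\kappa}[0,T]$, the separable (hence Polish, by Remark~\ref{seprem}) Besov space carrying the white noise~$\xi$, with $c_{\mathcal{X}}=c_{\h}$ the Cameron--Martin pseudo-metric on~$E$ (so $c_\h(x_1,x_2)=\|x_1-x_2\|_\h$ when $x_1-x_2\in\h$ and $+\infty$ otherwise); take $\mathcal{Y}=\mathcal{M}\ltimes\mathcal{D}^\gamma$ with the measurable cost $c_{\mathcal{Y}}=c=(d^\flat_\gamma)^{1/(N+M+1)}$ (note $\mathcal{Y}$ need not be Polish, which is precisely why the general contraction principle is needed); and let $\Psi:E\to\mathcal{M}\ltimes\mathcal{D}^\gamma$ be $\xi\mapsto(\Pi^\xi,\Gamma^\xi,\mathscr{D}_f(\Pi^\xi))$. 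The hypothesis of Lemma~\ref{modelbndlem}(2) entails the polynomial growth~\eqref{fgrowth} (with $G$ a polynomial of degree $\le N$), so $\mathscr{D}_f(\Pi^\xi)\in\mathcal{D}_T^\gamma(\Gamma^\xi)$ for $\pr$-almost every~$\xi$ by Lemma~\ref{modelbndlem}(1), and $\Psi$ is $\pr$-a.s. well defined and measurable since the model lift $\mathcal{L}$ is measurable and $\mathscr{D}_f$ is continuous in the model. Assumption~\ref{assu:Setting} holds because the law $\pr$ of~$\xi$ on~$E$ lies in $\mathscr{T}_2(2)$ with respect to $c_\h$ (\cite[Section~5.1]{djellout2004transportation}), as already used for Theorem~\ref{thm:TCIRV1}(1)--(2). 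Since $c_\h$ is infinite unless the two arguments differ by an element of~$\h$ and the prefactor below is everywhere positive, it suffices to verify the hypothesis of Lemma~\ref{Lem: contraction1} on pairs $(\xi,\xi+h)$, $h\in\h$; that is, to prove the $\h$-continuity estimate
\begin{equation}\label{eq:TCImodelledkey}
d^\flat_\gamma\big(\Psi(\xi),\Psi(\xi+h)\big)\;\leq\; L(\xi)\,\big(\|h\|_\h\vee\|h\|_\h^{N+M+1}\big),\qquad h\in\h,
\end{equation}
for $\pr$-almost every $\xi\in E$, with $L\in\bigcap_{p\geq1}L^p(E,\pr)$.

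To prove~\eqref{eq:TCImodelledkey} I would split its left-hand side, according to~\eqref{eq:flatmetric}, into the model part $\VERT(\Pi^\xi,\Gamma^\xi),(\Pi^{\xi+h},\Gamma^{\xi+h})\VERT$ and the modelled-distribution part $\|\mathscr{D}_f(\Pi^\xi);\mathscr{D}_f(\Pi^{\xi+h})\|_{\mathcal{D}_T^\gamma(\Gamma^\xi),\mathcal{D}_T^\gamma(\Gamma^{\xi+h})}$. For the model part, Lemma~\ref{shiftlem} (with $h_1=0$, $h_2=h$) controls the $\|\cdot\|$-component of~\eqref{threebar}, while the $\Gamma$-component is handled either by the metric equivalence of Remark~\ref{rem:modelequivalence} or, directly, by using that on the It\^o models $\Gamma^\xi_{s,t}=\Gamma_{\widehat{W}^H_s-\widehat{W}^H_t}$ acts on $I(\Xi)^m$ through the monomials $(\widehat{W}^H_s-\widehat{W}^H_t)^j$, so that $(\Gamma^\xi_{s,t}-\Gamma^{\xi+h}_{s,t})I(\Xi)^m$ is estimated via $|a^j-b^j|\leq j(|a|\vee|b|)^{j-1}|a-b|$ and the H\"older bounds on $\widehat{W}^H$ and $\widehat{h}:=K^H*h$; either route yields $\VERT(\Pi^\xi,\Gamma^\xi),(\Pi^{\xi+h},\Gamma^{\xi+h})\VERT\leq CK(\xi)\big(\|h\|_\h\vee\|h\|_\h^{M+1}\big)$ with $K\in\bigcap_{p\geq1}L^p$. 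For the modelled-distribution part, I would apply Lemma~\ref{modelbndlem}(2) with $\Pi^1=\Pi^\xi$, $\Pi^2=\Pi^{\xi+h}$, noting that $K^H*\Pi^\xi\Xi=\widehat{W}^H$, $K^H*\Pi^{\xi+h}\Xi-K^H*\Pi^\xi\Xi=\widehat{h}$, and $\|\widehat{h}\|_{\Cc^{H-\kappa}[0,T]}\lesssim_T\|h\|_\h$ (the Cameron--Martin space of $\widehat{W}^H$ embeds continuously into $\Cc^{H}[0,T]\hookrightarrow\Cc^{H-\kappa}[0,T]$). Since $\|\widehat{W}^H\|_{\Cc^{H-\kappa}[0,T]}$ is a measurable seminorm of a Gaussian measure it has finite moments of all orders (Fernique), and Lemma~\ref{modelbndlem}(2) gives $\|\mathscr{D}_f(\Pi^\xi);\mathscr{D}_f(\Pi^{\xi+h})\|_{\mathcal{D}_T^\gamma(\Gamma^\xi),\mathcal{D}_T^\gamma(\Gamma^{\xi+h})}\leq C\big(1\vee\|\widehat{W}^H\|^{N+M+1}_{\Cc^{H-\kappa}[0,T]}\big)\big(\|h\|_\h\vee\|h\|_\h^{N+M+1}\big)$.

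It remains to combine these bounds. Because $M+1\leq N+M+1$, one has $\|h\|_\h\vee\|h\|_\h^{M+1}\leq\|h\|_\h\vee\|h\|_\h^{N+M+1}$ for all $h\in\h$, so both contributions are dominated by $L(\xi)\big(\|h\|_\h\vee\|h\|_\h^{N+M+1}\big)$ with $L(\xi):=CK(\xi)+C\big(1\vee\|\widehat{W}^H\|^{N+M+1}_{\Cc^{H-\kappa}[0,T]}\big)\in\bigcap_{p\geq1}L^p(E,\pr)$, which is~\eqref{eq:TCImodelledkey}. Raising~\eqref{eq:TCImodelledkey} to the power $1/(N+M+1)$ and using $c_\h(\xi,\xi+h)=\|h\|_\h$ gives $c_{\mathcal{Y}}(\Psi(\xi),\Psi(\xi+h))\leq L(\xi)^{1/(N+M+1)}\big(c_{\mathcal{X}}(\xi,\xi+h)\vee c_{\mathcal{X}}(\xi,\xi+h)^{1/(N+M+1)}\big)$, with $L^{1/(N+M+1)}\in\bigcap_{p\geq1}L^p\subset L^{p^*}$ for $p^*=2\vee\frac{r}{r-1}$ and $r=N+M+1$ (here $r\geq2$, so $p^*=2$). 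Lemma~\ref{Lem: contraction1} now applies and shows that $\pr\circ\Psi^{-1}$, i.e. the law $\mu$ of $(\Pi^\xi,\Gamma^\xi,\mathscr{D}_f(\Pi^\xi))$, lies in $\mathscr{T}_\alpha(c)$ with $\alpha(t)=C(t^2\wedge t^{2(N+M+1)})$, which is the claim.

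The step I expect to be the main obstacle is the control of the $\Gamma$-component of the model metric under Cameron--Martin shifts: Lemma~\ref{shiftlem} as stated bounds only the $\|\cdot\|$-part, so one must either invoke the equivalence of Remark~\ref{rem:modelequivalence} with care (its constants are a priori model-dependent, and one needs their dependence on $\|\Pi^\xi\|$ and $\|\Pi^{\xi+h}\|$ to be absorbable into $L(\xi)$ and into the $\|h\|_\h\vee\|h\|_\h^{M+1}$ factor), or carry out a direct estimate of $(\Gamma^\xi_{s,t}-\Gamma^{\xi+h}_{s,t})I(\Xi)^m$ reproducing the two-regime behaviour of Lemma~\ref{shiftlem}. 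A secondary, routine point is the measurability of~$\Psi$ together with the fact that~\eqref{eq:TCImodelledkey} holds outside a $\pr$-null set, which follows from the almost-sure finiteness of $K(\xi)$ and of $\|\widehat{W}^H\|_{\Cc^{H-\kappa}[0,T]}$.
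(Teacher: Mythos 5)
Your proposal is correct and follows essentially the same route as the paper's proof: identify $\mathcal{X}=E$, $\mathcal{Y}=\mathcal{M}\ltimes\mathcal{D}^\gamma$, $\Psi=\mathscr{D}_f(\mathscr{L})$, split $d^\flat_\gamma$ into the model part (handled by Lemma~\ref{shiftlem} together with the equivalence of model distances) and the modelled-distribution part (handled by Lemma~\ref{modelbndlem}(2) via $K^H*\Pi^{\xi+h}\Xi-K^H*\Pi^\xi\Xi=\widehat{h}$), absorb both into $\|h\|_\h\vee\|h\|_\h^{N+M+1}$, and invoke Lemma~\ref{Lem: contraction1} with $r=N+M+1$. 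Your flag about the $\Gamma$-component of~\eqref{threebar} is well placed: the paper disposes of it by citing the equivalence of model distances (Remark~\ref{rem:modelequivalence}, via~\cite[Lemma~3.19]{bayer2020regularity}) without spelling out the constants' dependence on $\|\Pi^\xi\|,\|\Pi^{\xi+h}\|$, and your alternative direct estimate of $(\Gamma^\xi_{s,t}-\Gamma^{\xi+h}_{s,t})I(\Xi)^m$ is a sound way to make that step fully explicit.
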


\begin{proof} Let $h\in\h$, $(\Pi^1, \Gamma^1)=(\Pi, \Gamma)$ 
(Definition~\ref{def:modeltranslation}), $(\Pi^2, \Gamma^2)=(T_h\Pi, T_h\Gamma)\in\mathcal{M}_T(\mathcal{T})$. From Lemma~\ref{modelbndlem}(2) along with the continuous embedding $\Cc^{H-\kappa}\hookrightarrow\h$ we have 
\begin{align*}
\|\mathscr{D}_f(\Pi^1)&;\mathscr{D}_f(\Pi^2)\big\|_{\mathcal{D}_T^\gamma(\Gamma^1), \mathcal{D}_T^\gamma(\Gamma^2) } \leq  C_{f,M,T} 
 \bigg(\{1\vee \big\|\widehat{W}^H\big\|_{\Cc^{H-\kappa}[0,T]}^{N+M+1}\bigg)\bigg(\|h\|_{\h}\vee \|h\|^{N+M+1}_{\h}\bigg).
	\end{align*}
 Combining this estimate with the one obtained from Lemma~\ref{shiftlem} (with $h_1=0, h_2=h)$ and the equivalence of model norms (Remark~\ref{rem: Modeltop}) it follows that 
 \begin{equation}
d^\flat_{\gamma}( \mathscr{D}_f(\Pi^1),\mathscr{D}_f(\Pi^2))\lesssim \| \Pi^2-\Pi^1\|+\|\mathscr{D}_f(\Pi^2);\mathscr{D}_f(\Pi^1)\|_{\mathcal{D}_T^\gamma(\Gamma^1), \mathcal{D}_T^\gamma(\Gamma^2) }\leq K \|h\|_{\h}\vee \|h\|^{N+M+1}_{\h},
\end{equation}
with~$K$ a random variable with finite moments of all orders. 
Writing $\Pi^1=\mathscr{L}(\xi)$, $\Pi^2=\mathscr{L}(\xi+h)$,
letting $x_1=\xi$, $x_2=\xi+h$, $\mathcal{X}=E$, $\mathcal{Y}=\mathcal{M}\ltimes\mathcal{D}^\gamma$, $
\Psi=\mathscr{D}_f(\mathscr{L}), c_{\mathcal{Y}}= d^\flat_{\gamma}$ and $ c_{\mathcal{Y}}=c_{\h}$,
the last bound translates to  
 \begin{equation}
c\big(\Psi(x_1),  \Psi(x_2)\big)\leq L(x_1) \bigg[c_{\mathcal{X}}(x_1, x_2)\vee c_{\mathcal{X}}(x_1, x_2)^{\frac{1}{M+N+1}}\bigg].
\end{equation}
The conclusion then follows by virtue of Lemma~\ref{Lem: contraction1}.
\end{proof}

\begin{rem} To the best of our knowledge, the question of whether the metric space $(\mathcal{M}\ltimes\mathcal{D}^\gamma, d^\flat_{\gamma})$, with respect to a given regularity structure, is Polish remains open (see however~\cite[Theorem 4.18]{varzaneh2022geometry} for a positive answer in the setting of rough paths controlled by geometric rough paths). 
Nevertheless, we prove Theorem~\ref{thm:TCImodelled} using a generalised contraction principle, Lemma~\ref{Lem: contraction1}, which does not require~$\mathcal{Y}$ to be Polish.
\end{rem}

\noindent We conclude this section with a remark on the reconstruction operator $\mathscr{R}$.

\begin{rem}
In view of~\eqref{eq:reconstruction}, an alternative route to prove Lemma~\ref{Itoshiftlem} is to use the local Lipschitz continuity estimates of the reconstruction operator~\cite[Theorem 3.10, Equation~(3.4)]{hairer2014theory}. 
In summary, for two models $(\Pi^i, \Gamma^i)\in\mathcal{M}_T(\Tt), i=1,2$ and corresponding reconstruction operators $\mathscr{R}^i: \mathcal{D}_T^\gamma(\Gamma_i) \rightarrow C([0,T];\R)$, the following estimate holds for all $\phi\in C_c^\infty(\R), \lambda\in(0, 1], s\in[0,T]$:
\begin{align*}
& \bigg|\left\langle\mathscr{R}^2\mathscr{D}_f(\Pi^2)-\Pi^2_s\mathscr{D}_f(\Pi^2)(s)-\mathscr{R}^1\mathscr{D}_f(\Pi^1)+\Pi^1_s\mathscr{D}_f(\Pi^1)(s), \phi_s^\lambda \right\rangle \bigg| \\
&\leq C\left(\left\|\mathscr{D}_f(\Pi^1)
\right\|_{\mathcal{D}_T^\gamma(\Gamma^1)}
\left\|(\Pi^1, \Gamma^1); (\Pi^2, \Gamma^2)\right\|
 + \left\|\Pi^2\right\|\left\|\mathscr{D}_f(\Pi^1);\mathscr{D}_f(\Pi^2)\right\|_{\mathcal{D}_T^\gamma(\Gamma^1), \mathcal{D}_T^\gamma(\Gamma^2) }\right).
\end{align*}
\end{rem}
This estimate, however, takes into account both the growth of $f$ and the order of Wiener chaos of the random models. The latter may lead to sub-optimal $\h$-continuity estimates for~$X$ as follows: 
Letting $\Pi^2=T_h\Pi, \Pi^1=\Pi$ and invoking Lemma~\ref{shiftlem},~\ref{modelbndlem}(2), one sees that $\|\Pi^2\|\lesssim 1+\|h\|_{\h}\vee \|h\|^{M+1}_{\h}$ and $\left\|\mathscr{D}_f(\Pi^1);\mathscr{D}_f(\Pi^2)\right\|_{\mathcal{D}_T^\gamma(\Gamma^1), \mathcal{D}_T^\gamma(\Gamma^2) }\lesssim \|h\|_{\h}\vee\|h\|^{N+1}_{\h}$, where $N$ is the polynomial growth exponent of~$f$.
This leads to an estimate of the form 
\begin{equation}
	   \bigg\|\bigg(\int_{0}^{\cdot}f( \widehat{W}^H_r, r)dW_r\bigg)(\omega+h)- \bigg(\int_{0}^{\cdot}f( \widehat{W}^H_r, r)dW_r\bigg)(\omega)\bigg\|_{\Cc^\gamma[0,T]}\lesssim \|h\|_{\h}\vee\|h\|^{N+M+1}_{\h},
	\end{equation}	
which implies that the tail probabilities $\pr[\|fdW\|_{\Cc^\gamma}>r]$ decay at most like 
$\exp\{-r^{\frac{2}{N+M+2}}\}$.
It is then strsaightforward to check that, if $f$ is a polynomial of degree $N$, one can obtain a better decay rate of order $\exp\{-r^{\frac{2}{N+1}}\}$ via the arguments of Theorem~\ref{thm:TCIRV1}(3).

\section{TCIs for regularity structures: 2D PAM }\label{Section:PAM}

In the previous section we introduced some elements of regularity structures for rough volatility and proved TCIs for the It\^o model, the driftless log-price and for the joint law of the It\^o model and the modelled distribution lift. 
This section is devoted to the proof of TCIs in the setting of another regularity structure,
which arises in the study of singular SPDEs. 
In the sequel, given $\gamma\in\R$, $\Cc^\gamma$ will denote the space of distributions on $\mathbb{R}^2$, endowed with a seminorm topology similar to that defined in ~\eqref{eq:Besovnorm} (in fact it coincides with a local version of the Besov space $B^\alpha_{\infty, \infty}$).

To be precise, we are concerned with the  two-dimensional Parabolic Anderson Model (2d-PAM) with periodic boundary conditions, solution to the parabolic SPDE
    	\begin{equation}\label{eq:PAM}
    	\partial_tu = \Delta u+u\xi, 
    	\qquad
    	u(0,\cdot) = u_0(\cdot),
    	\end{equation}
    	where $\xi$ is spatial white noise on the $2$-torus $\mathbb{T}^2$ and $u_0\in \Cc^\gamma(\mathbb{T}^2)$ for some $\gamma\geq 0$. In general,  $d$-dimensional white noise is $-(d/2)^{-}$ regular in the sense that, almost surely, $\xi\in  \Cc^{\beta}$ for any $\beta<-d/2$. On the other hand, since the heat kernel is $2$-regularising, the solution $u$ is expected to live in~$\Cc^{\gamma}$, for any $\gamma<-d/2+2$. Thus, for $d=2$, $\gamma+\beta<0$ and as a result the product~$u\xi$ (and a fortiori the concept of solutions) is not classically defined (products of distributions on this scale are well defined when $\gamma+\beta>0$, as in~\cite[Theorem~2.85]{bahouri2011fourier}.
     The theory of regularity structures provides a notion of solutions that are defined as limits of properly re-normalised equations in which the noise $\xi$ is substituted by a smooth approximation $\xi^\epsilon:=\xi*\rho_\epsilon$ and $\rho_\epsilon$ is a mollifying sequence. In particular, letting \begin{equation}\label{eq:PAMconstant}
    	    C_\epsilon:=-\frac{1}{\pi}\log(\epsilon)
    	\end{equation}
    	be a divergent renormalisation constant, the solutions of 
    		\begin{equation}\label{eq:PAMrenorm}
    	\partial_t\tilde{u}_\epsilon=\Delta \tilde{u}_\epsilon+\tilde{u}_\epsilon\big(\xi_\epsilon-C_\epsilon\big), 
    	\qquad
    	\tilde{u}_\epsilon(0,\cdot)=u_0(\cdot),
    	\end{equation}
    	converge uniformly on compacts of $\R^+\times\mathbb{T}^2$ as $\epsilon\to0$ to a well-defined limit $u\in C^0(\R^+\times\mathbb{T}^2)$, 
    	then defined to be the solution of~\eqref{eq:PAM}. While a detailed overview of the solution theory of~\eqref{eq:PAM} is beyond the scope of this work, we shall provide a few facts that are relevant for the proof of our result, Theorem~\ref{thm:PAM}, along with pointers to the literature where necessary.
     
The 2d-PAM regularity structure space $\mathscr{T}$
(detailed in~\cite[Remark~8.8 and Section~9.1]{hairer2014theory}
and in~\cite[Section~3.1]{cannizzaro2017malliavin}) is generated by the symbols
        \begin{equation}\label{eq:PAMbasis}
            \mathcal{S} := \mathcal{W}\cup\mathcal{U}
             = \Big\{\Xi, I(\Xi)\Xi,  X_i\Xi:\; i=1,2\Big\} \cup \Big\{ \mathbf{1}, I(\Xi), X_i:\; i=1,2\Big\},
        \end{equation}
where $X_i$ stands for first degree monomials in each of the spatial variables, $\Xi$ for white noise and~$I$ for convolution with the heat kernel. 
The symbols' degrees are postulated as follows: 
\begin{center}
\begin{tabular}{ccccccc}
\hline
Symbol & $\mathbf{1}$ & $X_i$ & $\Xi$ & $X_i\Xi$ & $I(\Xi)$ & $I(\Xi)\Xi$ \\ 
Degree & 0 & $\displaystyle 1$ &$\displaystyle -1-\kappa$ & $\displaystyle -\kappa$ & $\displaystyle 1-\kappa$ & $\displaystyle -2\kappa$ \\
\hline
\end{tabular}
\end{center}
for $\kappa>0$ small enough. The structure group $G$ is then defined with the help of an additional set of symbols, that encode derivatives of the heat kernel, and its action on the basis vectors of $\mathscr{T}$ has an explicit matrix representation~\cite[Equation~(3.3)]{cannizzaro2017malliavin}.
The mollified noise $\xi_\epsilon$ has a canonical model lift $(\Pi_\epsilon, \Gamma_\epsilon)\in\mathcal{M}(\mathcal{T})$ and the topology on the space of models $\mathcal{M}(\mathcal{T})$ is given by the metric
\begin{equation}\label{twobarPAM}	
\left\|\Pi^1, \Pi^2\right\| \equiv 
\left\|\Pi^1-\Pi^2\right\|
 := \sup_{\substack{\phi\in C_c^2(\R^+\times \mathbb{T}^2), \|\phi\|_{\Cc^2}\leq 1\\ \supp(\phi)\subset B_{\mathfrak{s}}(0,1),\lambda\in(0,1]\\ \tau\in\mathcal{S}, z\in\R^+\times \mathbb{T}^2   }}
 \lambda^{-|\tau|}
 \left| \left\langle \left(\Pi^1_{z}-\Pi^2_{z}\right)\tau, \phi^\lambda_z\right\rangle \right|,
	\end{equation}
where for $z=(t,x),w=(s,y)\in\R^+\times\mathbb{T}^2$,
$\phi_z^\lambda(w)=\lambda^{-2}\phi(\lambda^{-2}(t-s), \lambda^{-1}(x-y))$ 
and 
$B_{\mathfrak{s}}(0,1)$ is the centered unit ball on $\R^+\times \mathbb{T}^2$ endowed with the parabolic distance $|(t,x)-(s,y)|_{\mathfrak{s}}=\sqrt{|t-s|}+|x-y|$ 
(this scaling is due to the fact that time and space play different roles in parabolic PDEs such as~\eqref{eq:PAM}). 

The need for renormalisation of~\eqref{eq:PAM} is linked to convergence properties of the sequence $\{(\Pi_\epsilon, \Gamma_\epsilon): \epsilon>0\}$ of canonical models. In fact, while the latter diverges as $\epsilon\to 0$, it is possible to construct a one-parameter group of transformations $\{M_\epsilon:\epsilon>0\}$, on $\mathcal{M}(\mathcal{T})$, known as the renormalisation group, such that  $(\hat{\Pi}_\epsilon, \hat{\Gamma}_\epsilon):=M_\epsilon[(\Pi_\epsilon, \Gamma_\epsilon)]$ converges in probability to a Gaussian model $(\hat{\Pi}, \hat{\Gamma})\in\mathcal{M}(\mathcal{T})$ (see~\cite[Sections~8.3 and~10.4]{hairer2014theory} for the relevant renormalisation theory).
The solutions of $\eqref{eq:PAMrenorm},~\eqref{eq:PAM}$ can then be expressed respectively as \begin{equation}\label{eq:PAMrecon}  \tilde{u}_\epsilon=\hat{\mathscr{R}}_\epsilon\mathscr{S}(u_0, \hat{\Pi}_\epsilon), u=\mathscr{R}\mathscr{S}(u_0, \hat{\Pi}),
\end{equation} where $\mathscr{S}$ (an abstract solution map between spaces of modelled distributions) and $\hat{\mathscr{R}}_\epsilon, \mathscr{R}$  (the reconstruction operators associated to $\hat{\Pi}_\epsilon, \hat{\Pi}$ respectively) are continuous with respect to the underlying models (see e.g.~\cite{cannizzaro2017malliavin}, Section 3.7 and references therein for a more detailed exposition of the solution theory. Moreover note that for~\eqref{eq:PAM} one has global existence, i.e. its explosion time is infinite with probability $1$).

An inspection of the basis elements $\mathcal{S}$~\eqref{eq:PAMbasis} shows that canonical models for PAM do not enjoy Gaussian concentration. Indeed, the symbol $I(\Xi)\Xi$ is on the second Wiener chaos with respect to~$\xi$. Moreover, since~\eqref{eq:PAM} exhibits (unbounded) multiplicative noise,  a similar observation is true for the solution~$u$.
In the following theorem we prove appropriate TCIs that reflect this heavier tail behaviour for both of these functionals.

     \begin{thm}\label{thm:PAM} Let $\mathcal{T}$ be the 2d-PAM regularity structure and $\mathcal{M}(\mathcal{T})$ be the space of canonical models on $\mathcal{T}$ and $f^+=f\vee 0$. The following hold:
     \begin{enumerate}
         \item The law $\mu\in\mathcal{M}(\mathcal{T})$ of the limiting model $(\hat{\Pi}, \hat{\Gamma})$ satisfies $\mathscr{T}_\alpha(c)$ with $c(\Pi^1, \Pi^2)=\|\Pi^1-\Pi^2\|^{1/2}$ and, for a constant $C>0$, $\alpha(t)=C (t\wedge t^2)$.
         \item Let $u, v\in \Cc(\R^+\times\mathbb{T}^2)$ solve~\eqref{eq:PAM} and  
         \begin{equation}
    \partial_tv=\Delta v+2v\xi,\;\;v(0,\cdot)=1
    \end{equation}
    respectively. For all $(t,x)\in\R^+\times\mathbb{T}^2$ such that $|v(t,x)|^{\frac{1}{2}}\in L^1(\Omega)$, the law $\mu\in \mathscr{P}(\R)$
    of $(\log|u(t,x)|)^+$ satisfies Talagrand's $\mathscr{T}_1(C)$.
     \end{enumerate}         
     \end{thm}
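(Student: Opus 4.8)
\emph{Part (1).} The underlying Gaussian object is spatial white noise $\xi$ on $\mathbb T^2$; let $(E,\h,\mu)$ be the associated abstract Wiener space, with $\h=L^2(\mathbb T^2)$ and $E=\Cc^{-1-\kappa}$ realised (as in Remark~\ref{seprem}) as the completion of smooth functions, hence Polish, and $\mu$ the law of $\xi$. By \cite[Section~5.1]{djellout2004transportation}, $\mu\in\mathscr T_2(2)$ with respect to $\|\cdot\|_\h$, so Assumption~\ref{assu:Setting} holds with $c_{\mathcal X}=\|\cdot\|_\h$. Write $\mathscr L\colon E\to\mathcal M(\mathcal T)$, $\xi\mapsto(\hat\Pi^\xi,\hat\Gamma^\xi)$, for the renormalised (limiting) model lift, which is $\mu$-a.s.\ well defined and measurable since the renormalisation constants $C_\epsilon$ of \eqref{eq:PAMconstant} are deterministic. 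The plan is to apply the generalised contraction principle, Lemma~\ref{Lem: contraction1}, with $\mathcal X=E$, $\mathcal Y=\mathcal M(\mathcal T)$, $\Psi=\mathscr L$, $c_{\mathcal Y}=c=\|\cdot\|^{1/2}$ and $r=2$; the single ingredient required is the Cameron--Martin continuity estimate for the renormalised model,
\[
\big\|\hat\Pi^{\xi+h}-\hat\Pi^\xi\big\|\ \le\ K(\xi)\,\big(\|h\|_\h\vee\|h\|_\h^{2}\big),\qquad h\in\h,\ \mu\text{-a.e.\ }\xi,
\]
with $K\in\bigcap_{p\ge1}L^p(\mu)$. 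Taking square roots casts this into the hypothesis of Lemma~\ref{Lem: contraction1} with $L=CK^{1/2}$ (which lies in $L^{p^*}=L^2$), and the stated $(\alpha,c)$-TCI follows.

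Proving this estimate is the main point of part (1). First, Wick renormalisation commutes with the translation $\xi\mapsto\xi+h$: since $h$ is deterministic and $\h$-regular, the divergent constants $C_\epsilon$ — which depend only on the (translation-invariant) covariance of $\xi$ — are unaffected, so the translated model is again admissible and coincides with $\hat\Pi^{\xi+h}$. One then estimates the difference symbol by symbol, as in Lemma~\ref{shiftlem}: $\mathbf 1$ and the $X_i$ are $h$-independent; $\Xi$, $I(\Xi)$, $X_i\Xi$ and the first-chaos cross terms $\big((K*\xi)(\cdot)-(K*\xi)(z)\big)h$ and $\big((K*h)(\cdot)-(K*h)(z)\big)\xi$ of $I(\Xi)\Xi$ are linear in $h$ and, tested against $\phi_z^\lambda$ with the appropriate power of $\lambda$, are bounded by $(1+K_1(\xi))\|h\|_\h$ with $K_1$ built from $\|K*\xi\|_{\Cc^{1-\kappa}}$- and $\|\xi\|_{\Cc^{-1-\kappa}}$-type seminorms (all moments finite), while the purely deterministic term $\big((K*h)(\cdot)-(K*h)(z)\big)h$ of $I(\Xi)\Xi$ is quadratic in $h$ and is bounded by $\|h\|_\h^2$ using Schauder estimates together with $L^2(\mathbb T^2)\hookrightarrow\Cc^{-1-\kappa}$ and the regularising effect of $K$. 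Taking the supremum over test functions and base points, and invoking the equivalence of the $\|\cdot\|$- and $\VERT\cdot\VERT$-topologies for admissible models (Remark~\ref{rem:modelequivalence}, \cite{cannizzaro2017malliavin}), gives the claim. The delicate part is that these bounds must be obtained uniformly in the mollification parameter and must survive the renormalised limit with $K$ retaining all moments; this is precisely where the moment estimates of the renormalisation theory are used, and is the chief obstacle of part (1).

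\emph{Part (2).} Fix $(t,x)$ with $\sqrt{|v(t,x)|}\in L^1(\Omega)$. By the characterisation \cite[Theorem~1.13]{10.1214/ECP.v11-1198} (applied with $\alpha(t)=t^2$), the assertion that the law of $\Phi:=(\log|u(t,x)|)^+$ satisfies $\mathscr T_1(C)$ is equivalent to $\ex[\exp(\lambda\Phi^2)]<\infty$ for some $\lambda>0$; and by linearity of \eqref{eq:PAM} together with $(\log|a-b|)^+\le\log2+(\log a)^++(\log b)^+$ it suffices to treat the case $u_0\ge0$, so that $u\ge0$. Following the strategy of the proof of Theorem~\ref{thm:TCIRV1}(4), the plan is to establish the multiplicative Cameron--Martin estimate
\[
u(t,x;\xi+h)\ \le\ C\,\sqrt{v(t,x;\xi)}\;e^{\,c\|h\|_\h},\qquad h\in\h,\ \mu\text{-a.e.\ }\xi,
\]
which after taking logarithms reads $\Phi(\xi+h)\le\log\!\big(1+C\sqrt{v(t,x;\xi)}\big)+c\|h\|_\h$, with $\mu$-integrable prefactor by hypothesis; the generalised Fernique theorem \cite[Theorem~11.7]{friz2020course} then yields the required Gaussian integrability of $\Phi$, hence $\mathscr T_1(C)$.

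The multiplicative estimate is the crux. Starting from the Feynman--Kac/Wick representation $u(t,x;\xi)=\ex_B\big[u_0(B_t)\,Z_t(\xi,B)\big]$, where $B$ is a Brownian motion started at $x$ and $Z_t(\xi,B)$ is the renormalised multiplicative chaos, the shift $\xi\mapsto\xi+h$ multiplies $Z_t(\xi,B)$ by the deterministic (given $B$) factor $\exp\!\big(\int_0^t h(B_s)\,ds\big)$. A Cauchy--Schwarz step in $\ex_B$ then separates a second-moment ("doubled-coupling") contribution, which — once the renormalisations of the two equations are matched — is exactly $v(t,x;\xi)$, the solution of $\partial_t v=\Delta v+2v\xi$, from a factor $\ex_B\big[\exp\!\big(2\int_0^t h(B_s)\,ds\big)\big]$; the latter is bounded by $e^{c\|h\|_\h}$ via Khasminskii's lemma, using the embedding $\h=L^2(\mathbb T^2)\hookrightarrow$ Kato class (so that $\sup_y\ex_B^y\!\big[\int_0^t|h(B_s)|\,ds\big]=\sup_y(G_t|h|)(y)\lesssim\|h\|_\h$ by the two-dimensional Sobolev embedding for the truncated Green's function $G_t$) followed by a subdivision of $[0,t]$ into $O(\|h\|_\h)$ pieces to upgrade the small-$\|h\|_\h$ bound to the exponential one. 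Making this Cauchy--Schwarz step compatible with renormalisation — so that the genuinely divergent self-interaction is absorbed and only the finite coupling-$2$ object $v$ survives — is the delicate point here; it is the 2d-PAM counterpart of Lemma~\ref{Itoshiftlem}, and it is for this step that the integrability of $\sqrt{v(t,x)}$ is needed.
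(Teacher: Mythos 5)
Your plan is correct and follows essentially the same two-step strategy as the paper — a Cameron--Martin continuity estimate fed into Lemma~\ref{Lem: contraction1} (part 1) and a Feynman--Kac/Cauchy--Schwarz bound fed into the generalised Fernique theorem and \cite[Theorem 1.13]{10.1214/ECP.v11-1198} (part 2) — and you identify exactly the right target estimate in each case. Where you diverge is in how the key bound for part (1) is obtained. You propose a symbol-by-symbol computation analogous to Lemma~\ref{shiftlem}, tracking mollified approximations and flagging the uniform-in-$\epsilon$ limit as the chief obstacle. The paper instead works directly with the limiting model via its Wiener chaos decomposition \cite[Theorem~10.7, Proposition~10.11, Theorem~10.19]{hairer2014theory}: writing $\langle\hat\Pi_z I(\Xi)\Xi,\phi_z^\lambda\rangle = c + I_2(f_{\lambda,\tau,z})$, the translation acts by $\xi\mapsto\xi+h$ inside $I_2$, the quadratic-in-$\xi$ chaos term cancels in the difference $T_h\hat\Pi-\hat\Pi$ (so the divergent $I_0$ constant drops out and no mollification limit needs to be re-examined), the two cross terms are bounded by $\|h\|_\h\,\|I_1(f_{\lambda,\tau,z})\|_\h$ via stochastic Fubini, the It\^o--Wiener isometry and Cauchy--Schwarz, and the deterministic term by $\|f_{\lambda,\tau,z}\|_\h\|h\|_\h^2$; the $L^p$ moment bound on the random prefactor then comes from the Kolmogorov-type argument of \cite[Proposition~3.32]{hairer2014theory}. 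This is the shortcut your plan is missing: the ``delicate'' uniform-in-$\epsilon$ step you anticipate does not actually arise, because the chaos expansion of $\hat\Pi$ is already the limiting object and the renormalisation affects only the constant $I_0$ piece, which cancels. For part (2), your route is essentially identical to the paper's — including the observation that the $C_\epsilon$ constants in the Cauchy--Schwarz split must be matched to those of the coupling-$2$ equation — except that you bound the $h$-factor $\ex_B\big[\exp(2\int_0^t h(B_s)\,ds)\big]$ by Khasminskii's lemma, whereas the paper bounds the corresponding $v_1^h$ by a mild-formulation Gr\"onwall argument; both deliver the required $e^{c\|h\|_\h}$ growth.
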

     \begin{proof} Throughout the proof we set $\h=L^2(\mathbb{T}^2)$, the Cameron-Martin space of~$\xi$.
\begin{enumerate}         
\item[(1)] Let $\phi\in C_c^2(\R^+\times \mathbb{T}^2)$ be a test function with support on the unit ball $B_{\mathfrak{s}}(0,1)$. In view of~\cite[Theorem~10.7, Equation~(10.7), Proposition~10.11]{hairer2014theory}, we have the following Wiener chaos decomposition: 
for all $\tau\in\mathcal{S}, z=(t,x)\in \R^+\times \mathbb{T}^2, \lambda\in(0,1)$,
$$
\left\langle\hat{\Pi}_z\tau, \phi_{z}^{\lambda} \right\rangle
 = \sum_{k\leq \#(\tau)}I_k\bigg(  \int_{\R^+\times \mathbb{T}^2}\phi_z^\lambda(y)S_z^{\otimes k}(\hat{\mathfrak{W}}^k\tau)(y)dy\bigg),
$$
where for each $k,y$, $(\hat{\mathfrak{W}}^k\tau)(y)\in\h^{\otimes k}$ is a function (or distribution) on~$k$ copies of~$\mathbb{T}^2$ denoted by $(\hat{\mathfrak{W}}^k\tau)(y; x_1,\dots x_k)$, $S_z\phi(y)=\phi(y-x)$,
$\phi\in\h$    and $ \#(\tau)$ is the number of occurrences of the symbol~$\Xi$ in~$\tau$. The map $I_k: \h^{\otimes k}\rightarrow L^2(\Omega)$ can be thought as a multiple Wiener integral with respect to the white noise~$\xi$.  In particular, for $f\in\mathcal{\h}^{\otimes{k}}$ one can write~\cite[Section~1.1.2]{nualart2006malliavin} 
$$
I_k(f)=\langle f, \xi^{\otimes k}\rangle:=\int_{\mathcal(\mathbb{T}^2)^k} f(x_1,\dots, x_k)d\xi(x_1)\dots d\xi(x_k).
$$
For the sequel, we shall focus on the symbol $\tau=I(\Xi)\Xi$ since this is the one which determines the tail behaviour of $\hat{\Pi}$. Similar but simpler arguments then hold for the symbols that correspond to a Wiener chaos of lower order and thus will be omitted. In this case we have $\#(\tau)=2$ and, in light of~\cite[Theorem 10.19]{hairer2014theory}, 
     \begin{equation}
         \begin{aligned}
             \langle\hat{\Pi}_z\tau, \phi_{z}^{\lambda} \rangle=I_0\bigg(  \int_{\R^+\times \mathbb{T}^2}\phi_z^\lambda(y)(\hat{\mathfrak{W}}^0\tau)(y)dy\bigg)+I_2\bigg(  \int_{\R^+\times \mathbb{T}^2}\phi_z^\lambda(y)S_{z}^{\otimes 2}(\hat{\mathfrak{W}}^2\tau)(y)dy\bigg)
         \end{aligned},
     \end{equation}
     where the first term is a non-random constant, say~$c$, and $\hat{\mathfrak{W}}^0$ can be explicitly specified in terms of the heat kernel~\cite[Theorem~10.19]{hairer2014theory}. 
     Letting $h\in\h$ and denoting the argument of~$I_2$ by $f_{\lambda, \tau, z}$, then
         \begin{align*}
             \langle T_h\hat{\Pi}_z\tau, \phi_{z}^{\lambda} \rangle-c
             & =\langle f_{\lambda, \tau, z}, (\xi+h)^{\otimes 2}\rangle\\
             & = \langle f_{\lambda, \tau, z}, \xi^{\otimes 2}\rangle+ \langle f_{\lambda, \tau, z}, h^{\otimes 2}\rangle+\langle f_{\lambda, \tau, z}, h\otimes\xi\rangle+\langle f_{\lambda, \tau, z}, \xi\otimes h\rangle.
\end{align*}
    Thus 
    \begin{equation}
         \begin{aligned}
             \langle (T_h\hat{\Pi}_z-\hat{\Pi}_z)I(\Xi)\Xi, \phi_{z}^{\lambda} \rangle= \langle f_{\lambda, \tau, z}, h^{\otimes 2}\rangle+ 
   \langle f_{\lambda, \tau, z}, h\otimes\xi\rangle+   
 \langle f_{\lambda, \tau, z}, \xi\otimes h\rangle.
              \end{aligned}
     \end{equation}
     The first term on the right-hand side is deterministic and can be bounded by $\|f_{\lambda, \tau, z}\|_{\h}\|h\|^2_{\h}$ by Cauchy-Schwarz. 
For the other two terms, stochastic Fubini and Cauchy-Schwarz imply
\begin{equation}
    \begin{aligned}
       \int f_{\lambda, \tau, z}(x_1, x_2)h(x_1)dx_1d\xi(x_2)&=\int I_1\bigg(f_{\lambda, \tau, z}(x_1, \cdot)\bigg)h(x_1) dx_1\\&\leq \|h\|_\h\|I_1\big(f_{\lambda, \tau, z}(\bullet, \cdot)\|_{\h} 
    \end{aligned}
\end{equation}
and the symmetric bound
$$ \int f_{\lambda, \tau, z}(x_1, x_2)d\xi(x_1)h(x_2)dx_2\leq \|h\|_\h\|I_1\big(f_{\lambda, \tau, z}(\cdot, \bullet)\|_{\h},
$$
where $\bullet$ indicates the variable that is integrated with respect to Lebesgue measure.
Putting these estimates together it follows that 
\begin{equation}
    \begin{aligned}
        |\langle (T_h\hat{\Pi}_z&-\hat{\Pi}_z)I(\Xi)\Xi, \phi_{z}^{\lambda} \rangle|\\&\leq 
 \bigg( \|I_1\big(f_{\lambda, \tau, z}(\cdot, \bullet)\|_{\h}+\|I_1\big(f_{\lambda, \tau, z}(\bullet, \cdot)\|_{\h} + \|f_{\lambda, \tau, z}\|_{\h}
             \bigg)\|h\|_{\h}\vee\|h\|^2_{\h}.
    \end{aligned}
\end{equation}

By It\^o-Wiener isometry and the estimates of~\cite[Proposition~10.11]{hairer2014theory}, 
the $L^2(\Omega)$-norms of the random prefactors on the right-hand side are bounded (up to a constant) by $\|f_{\lambda, \tau, z}\|_{\h}$ which in turn is bounded by $\lambda^{\kappa+2|\tau|}$.
Finally, from \cite[Theorem~10.7, (10.4), Proposition~3.32]{hairer2014theory}, 
we conclude that, for a random constant $L$ with finite moments of all orders, 
$$
 \|(T_h\hat{\Pi}_z-\hat{\Pi}_z)\|\leq L\|h\|_{\h}\vee\|h\|^2_{\h}.
 $$
 In view of Lemma~\ref{Lem: contraction1} the proof is complete.
\item[(2)] A solution theory for a shifted version of~\eqref{eq:PAM} in the direction of a Cameron-Martin space element $h\in\h$ was developed in~\cite[Theorem 3.26]{cannizzaro2017malliavin}.
Thus, solutions of 
    		\begin{equation}\label{eq:PAMh}
    	\partial_tu^h=\Delta u^h+u(\xi+h), \;\; u(0,\cdot)=u_0(\cdot)
    	\end{equation}
    	can be understood as limits, as $\epsilon\to 0$, of the renormalised equations 
    	\begin{equation}\label{eq:PAMhren}
    	\partial_t\tilde{u}^{h_\epsilon}_\epsilon=\Delta \tilde{u}^{h_\epsilon}_\epsilon+\tilde{u}^{h_\epsilon}_\epsilon\big(\xi_\epsilon+h_\epsilon-C_\epsilon\big), \;\; \tilde{u}^{h_\epsilon}_\epsilon(0,\cdot)=u_0(\cdot),
    	\end{equation}
    	where $h_\epsilon=h*\rho_\epsilon$ and $ C_\epsilon$ as in~\eqref{eq:PAMconstant}. Moreover, one can write $\tilde{u}^{h_\epsilon}:=\mathscr{R}^{M_\epsilon}\mathscr{S}(u_0, M_\epsilon T_h\Pi_\epsilon)$ similar to~\eqref{eq:PAMrecon}.
    	
    	Proceding to the main body of the proof, let $T>0, \epsilon<1$ and $(t,x)\in (0, T]\times\mathbb{T}^2$. 
    	Due to the linearity of~\eqref{eq:PAMhren}, we obtain via the Feynman-Kac formula
    	\begin{equation}
    	\begin{aligned}
    	\tilde{u}^{h_\epsilon}_\epsilon(\tau,y)=\ex^y_{B}\bigg[\int_{\tau}^Tu_0(B_r)\exp\bigg(-\int_{\tau}^{r}\big\{ h_\epsilon(B_s)+\xi_\epsilon(B_s)-C_\epsilon\big\}ds     \bigg) dr,            \bigg],
    	\end{aligned}
    	\end{equation}
    	where $\tau=T-t, y\in\mathbb{T}^2$ and~$B$ is a Brownian motion independent of~$\xi$.
     From Cauchy-Schwarz inequality and the fact that $C_\epsilon>0$, we obtain
    		\begin{equation}\label{eq:Ferniqueprelim}
    	\begin{aligned}
    	\big|\tilde{u}^{h_\epsilon}_\epsilon(\tau, y)\big|& \leq\ex^y_{B}\bigg[\int_{\tau}^Tu^2_0(B_r)\exp\bigg(-2\int_{\tau}^{r} h_\epsilon(B_s)ds     \bigg) dr            \bigg]^{\frac{1}{2}}\\&\times\ex^y_B\bigg[\int_{\tau}^T\exp\bigg(-2\int_{\tau}^{r}\big\{ \xi_\epsilon(B_s)-2C_\epsilon\big\}ds     \bigg) dr    \bigg]^{\frac{1}{2}}\\&\leq |v_1^{h_\epsilon}(\tau,y)|^{\frac{1}{2}}|v_2^{\epsilon}(\tau,y)|^{\frac{1}{2}},
    	\end{aligned}
    	\end{equation}
    	where, from another application of Feynman-Kac, $v_1^{h_\epsilon}$ solves 
    	\begin{equation}
    \partial_tv^{h_\epsilon}_1=\Delta v^{h_\epsilon}_1+2v^{h_\epsilon}_1h_{\epsilon},
    \qquad
     v^{h_\epsilon}_1(0,\cdot)=u^2_0(\cdot),
    \end{equation}
    and 
    \begin{equation}
    \partial_tv^{\epsilon}_2=\Delta v^{\epsilon}_2+2v^{\epsilon}_2\big(\xi_\epsilon-2C_\epsilon\big), 
    \qquad
     v^{\epsilon}_2(0,\cdot)=1.
    \end{equation}
     As $\epsilon\to 0$, $\partial_tv^{h_\epsilon}_1$ converges uniformly in $(\tau, y)$ to the smooth solution of the well-posed parabolic PDE
     	\begin{equation}
     \partial_tv^{h}_1=\Delta v^{h}_1+2v^{h}_1h,
     \qquad
     v^{h}_1(0,\cdot)=u^2_0(\cdot).
     \end{equation}
     Writing the solution in mild formulation and applying Gr\"onwall's inequality,
     \begin{equation}
    \| v^{h}_1(t, \cdot)\|_{C(\mathbb{T}^{2})}\leq C\|u_0\|^2_{C(\mathbb{T}^{2})}\bigg(\frac{T}{\tau}\bigg) e^{2\|h\|_{L^2}t}.
     \end{equation}
     Moreover, from the renormalisation theory of~\eqref{eq:PAM}~\cite[Theorem~3.24]{cannizzaro2017malliavin}, $v^{\epsilon}_2$ converges to a well-defined limit $v_2(t,x)$ as $\epsilon\to 0$. Thus, taking $\epsilon\to 0$ in~\eqref{eq:Ferniqueprelim}, we obtain
$$
\big|u^{h}(\tau, y)\big| \leq C|v_2(\tau,y)|^{\frac{1}{2}}\|u_0\|_{C(\mathbb{T}^{2})}\bigg(\frac{T}{\tau}\bigg) e^{\|h\|_{L^2}T},
$$
which implies that
     \begin{equation}\label{eq:Fernique}
     \begin{aligned}
     \big(\log\big|u^{h}(\tau, y)\big|\big)^+& \leq\log\bigg(1+C|v_2(\tau,y)|^{\frac{1}{2}}\|u_0\|_{C(\mathbb{T}^{2})}\bigg(\frac{T}{\tau}\bigg)\bigg)+ T\|h\|_{L^2}.
     \end{aligned}
     \end{equation}
     Since $\big(\log|u^{h}(\tau, y)|\big)^+=\big(\log|\mathscr{R}\mathscr{S}(u_0, T_h\hat{\Pi})(\tau, y)|\big)^+=:\mathscr{G}_{\tau, y}(\xi+h)$ where 
     
     $$\mathscr{G}_{\tau,y}: (\Cc^{-1-\kappa}, \h, \gamma)\rightarrow \R $$ is a measurable map
     from the Wiener space of the noise,  we can rewrite~\eqref{eq:Fernique} as
     \begin{equation}
     \begin{aligned}
    \mathscr{G}_{\tau, y}(\xi+h)& \leq\log\bigg(1+C|v_2(\tau,y)|^{\frac{1}{2}}\|u_0\|_{C(\mathbb{T}^{2})}\bigg(\frac{T}{\tau}\bigg)\bigg)+ T\|h\|_{L^2}.
     \end{aligned}
     \end{equation}    
    By assumption, the non-negative random variable \begin{equation}\label{condition}
     X_{\tau,y}:=\log\bigg(1+C|v_2(\tau,y)|^{\frac{1}{2}}\|u_0\|_{C(\mathbb{T}^{2})}\bigg(\frac{T}{\tau}\bigg)\bigg)\in L^1(\Omega),
     \end{equation}
     and appealing to the generalised Fernique theorem~\cite[Theorem 11.7]{friz2020course}, 
     we deduce that~$\mathscr{G}_{\tau,y}$ has Gaussian tails. 
     In particular, letting~$\Phi$ denote the cumulative distribution function of a standard Normal distribution, then for all $a>0$ and $r>a$,
     $$
     \pr\big[\mathscr{G}_{\tau,y}>r\big]
     \leq\exp\bigg\{ -\half\left(\hat{a}+\frac{r-a}{T}\right)^2\bigg\},
     $$
     with 
     $\hat{a}=\Phi^{-1}(P_a)=\Phi^{-1}\big(\pr[X_{\tau,y}\leq a ]    \big)$,
     and hence for some $\lambda>0$ sufficiently small,  
     $$
     \ex\left[ \exp\left(\lambda   \mathscr{G}_{\tau,y}^2  \right)\right] < \infty.
     $$
     The proof follows by noting that the latter is equivalent to Talagrand's $\mathscr{T}_1(C)$ (see~\cite[Theorem 1.13]{10.1214/ECP.v11-1198} with $\mathcal{X}=\R$ equipped with the standard metric and $\alpha(t)=t^2$).
\end{enumerate}
\end{proof}     
\begin{rem} Theorem~\ref{thm:PAM}(2) implies that, with enough integrability, the solution to~\eqref{eq:PAM} has pointwise log-normal tails: 
with the notation from the proof, for all $a>0, r>e^a$,
$$
\pr\big[ |u(\tau, y)|>r    \big]  \leq\exp\left\{ -\half\left[\hat{a}+\frac{\log(r)-a}{T}\right]^2\right\}.
$$
\end{rem}
     
\begin{rem}
Assumption~\eqref{condition} is satisfied for example when $\tau$ is sufficiently small for all $y\in\mathbb{T}^2$.
Indeed, as shown in~\cite{gu2018moments}, the solution of the 2d PAM has finite moments of all orders for small~$t$. 
Moreover, from~\cite{matsuda2022integrated}, $\ex|u(t, 0)|$ explodes when~$t$ is sufficiently large.
\end{rem}


\section{Weighted logarithmic Sobolev inequalities}\label{Section:WLSIs}
This section can be read independently from the rest of this work and is devoted to weighted logarithmic Sobolev inequalities ($\WLSIs$) for Gaussian functionals. 
In particular, we consider functionals~$\Psi$, defined on an abstract Wiener space $(\Omega, \h, \gamma)$, that take values in a finite-dimensional vector space. We include this analysis here for the following reasons: 1) As explained in the introduction, the tools for obtaining such inequalities are similar in flavour to the ones we used to obtain TCIs: instead of studying $\h$-continuity properties for the functionals of interest, WLSIs rely on $\h$-differentiability properties. 2) WLSIs imply Talagrand's 2-TCI with respect to a weighted metric on $\R^m$ (Theorem~\ref{thm:WLSIconsequences}(2)).

First we extend a contraction principle for $\WLSIs$ proved in~\cite{bartl2020functional} and present some implications of WLSIs. Then, we leverage tools from Malliavin calculus to show that a wide class of  Gaussian functionals satisfy $\WLSIs$ with appropriate weights.

\begin{dfn} 
A probability measure $\mu\in\mathscr{P}(\R^m)$ satisfies $\WLSI(G)$ with $G:\R^m\rightarrow[0,\infty]$ 
if there exists a constant $C>0$ such that 
 \begin{equation}
    Ent_\mu(f^2):=\int_{\R^m}f^2\log\bigg(\frac{f^2}{\int f^2}\bigg)d\mu\leq 
    2C\int_{\R^m}|\nabla f|^2 G d\mu
    \end{equation}
    holds for all differentiable $f:\R^m\rightarrow \R$ for which the right-hand side is finite.
\end{dfn}

\begin{prop}[WLSI contraction principle]\label{Prop:logSobolev}
    Let $(\Omega, \h, \gamma)$ be an abstract Wiener space. Let $\Psi: \Omega\rightarrow \R^m$ be Malliavin differentiable and $G:\R^m\rightarrow[0, \infty]$ such that $$|D\Psi|^2_{\h}\leq c G(\Psi)$$
    holds $\gamma$-a.s. for a constant $c>0$. Then
    \begin{itemize}
    \item[(i)] $\mu=\gamma\circ\Psi^{-1}$ satisfies $\WLSI(2cG)$;
    \item[(ii)] For $i=1,\dots, m$, let~$\mu_i$ denote the $i$-th marginal of~$\mu$. 
    If $G\in L^1(\mu)$, then there exists a measurable function $G_i:\R^{m-1}\rightarrow[0,\infty]$ such that $\mu_i$ satisfies $\WLSI(G_i)$.
    \end{itemize}
\end{prop}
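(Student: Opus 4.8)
The plan is to deduce both statements from the logarithmic Sobolev inequality of Gross on the abstract Wiener space $(\Omega,\h,\gamma)$, namely $Ent_\gamma(F^2)\le 2\int_\Omega|DF|_\h^2\,d\gamma$ for Malliavin-differentiable $F$, combined with the chain rule for the Malliavin derivative and, for part~(ii), a disintegration of the pushforward $\mu=\gamma\circ\Psi^{-1}$.

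\textbf{Part (i).} First I would treat a bounded $f\in C^1(\R^m)$ with $\int_{\R^m}|\nabla f|^2G\,d\mu<\infty$. For such $f$ the Malliavin chain rule gives $D(f\circ\Psi)=\sum_{j=1}^m(\partial_jf)(\Psi)\,D\Psi_j$, and, viewing $D\Psi$ as a Hilbert--Schmidt operator with Gram (Malliavin) matrix $\sigma_{jk}=\langle D\Psi_j,D\Psi_k\rangle_\h$,
$$
|D(f\circ\Psi)|_\h^2=\nabla f(\Psi)^{\!\top}\sigma\,\nabla f(\Psi)\le|\nabla f(\Psi)|^2\,|D\Psi|_\h^2\le c\,|\nabla f(\Psi)|^2\,G(\Psi),
$$
the last inequality being the hypothesis; in particular $f\circ\Psi\in\mathbb{D}^{1,2}$. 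Since the entropy of a square depends only on the law, $Ent_\gamma((f\circ\Psi)^2)=Ent_\mu(f^2)$, while $\int_\Omega|\nabla f(\Psi)|^2G(\Psi)\,d\gamma=\int_{\R^m}|\nabla f|^2G\,d\mu$ by the change of variables $\mu=\gamma\circ\Psi^{-1}$. Plugging the display into Gross's inequality yields $Ent_\mu(f^2)\le 2c\int_{\R^m}|\nabla f|^2G\,d\mu$, i.e. $\mu$ satisfies $\WLSI(2cG)$ with constant $C=1/2$. A routine truncation (replace $f$ by $\tau_n\circ f$, with $\tau_n$ a smoothed cut-off at level $n$) then removes the boundedness assumption and gives the inequality for every differentiable $f$ with finite right-hand side; on $\{G\circ\Psi=+\infty\}$ the hypothesis is vacuous and the weight $2cG$ is infinite, so nothing is lost there.

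\textbf{Part (ii).} Fix $i$ and let $\pi_i:\R^m\to\R^{m-1}$ be the projection forgetting the $i$-th coordinate, so that $\mu_i=\mu\circ\pi_i^{-1}$. Disintegrating $\mu(dx)=\mu_i(dx')\,k(x',dx_i)$ along $\pi_i$, set
$$
G_i(x'):=\int_\R G(x)\,k(x',dx_i)=\ex_\mu\big[G\,\big|\,\pi_i=x'\big]\in[0,\infty],
$$
which is measurable and, since $G\in L^1(\mu)$, finite $\mu_i$-a.e. Given a differentiable $g:\R^{m-1}\to\R$ with $\int|\nabla g|^2G_i\,d\mu_i<\infty$, apply the $\WLSI(2cG)$ of part~(i) to $f:=g\circ\pi_i$: one has $|\nabla f(x)|=|\nabla g(\pi_i(x))|$, $Ent_\mu(f^2)=Ent_{\mu_i}(g^2)$, and by the disintegration and Tonelli
$$
\int_{\R^m}|\nabla f|^2G\,d\mu=\int_{\R^{m-1}}|\nabla g(x')|^2\Big(\int_\R G\,k(x',dx_i)\Big)\,\mu_i(dx')=\int_{\R^{m-1}}|\nabla g|^2G_i\,d\mu_i<\infty .
$$
Hence the $\mu$-WLSI applies and yields $Ent_{\mu_i}(g^2)\le 2c\int_{\R^{m-1}}|\nabla g|^2G_i\,d\mu_i$, i.e. $\mu_i\in\WLSI(G_i)$ after renaming the constant.

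I expect the only genuine difficulty to lie in the approximation step of part~(i): making the Malliavin chain rule and the passage from the "nice" functions on which Gross's inequality applies directly to all differentiable $f$ of finite energy fully rigorous, while tracking the integrability needed for $f\circ\Psi\in\mathbb{D}^{1,2}$ and the $[0,\infty]$-valued convention for the weight. This is in the spirit of the argument in \cite{bartl2020functional}; the disintegration of part~(ii) is then routine measure theory.
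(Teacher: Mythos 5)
Your proof follows essentially the same route as the paper's: part (i) is Gross's log-Sobolev inequality on the Wiener space combined with the Malliavin chain rule and the pushforward identity $Ent_\gamma((f\circ\Psi)^2)=Ent_\mu(f^2)$, and part (ii) is the same disintegration of $\mu$ along the projection forgetting the $i$-th coordinate with $G_i$ the conditional expectation of $G$. The only difference is that you spell out the truncation step to pass from bounded $C^1$ test functions to all differentiable $f$ of finite energy, and you state the Cauchy--Schwarz bound via the Malliavin matrix explicitly, which the paper leaves implicit.
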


\begin{proof}
    (i) Let $f:\R^m\rightarrow\R$ differentiable. Since $\Psi$ is Malliavin differentiable, so is $f(\Psi)$ and moreover $Df(\Psi)=\nabla f(\Psi) D\Psi$. Recalling~\cite{gross1975logarithmic} that $\gamma$ satisfies $\LSI(2)$, we have
      \begin{align*}
            Ent_{\mu}(f^2)= Ent_{\gamma}((f\circ\Psi)^2)
            &\leq 2\int_{\Omega }\big\|Df(\Psi)\big\|^2_{\h}d\gamma\\&\leq2\int_{\Omega}|\nabla f(\Psi)|^2\big\|D\Psi\big\|^2_{\h}d\gamma\leq 2c\int_{\Omega}|\nabla f(\Psi)|^2G(\Psi)d\gamma.
      \end{align*}
      (ii) Without loss of generality, we shall prove the inequality for $i=1$. Since~$\mu$ is a Borel probability measure on a Polish space, we can apply the disintegration theorem~\cite[Theorem A.5.4]{dupuis2011weak} to write
      $$\mu(dx)=\pi(dx_1| x_2, \dots, x_{m})\mu_1(dx_2,\dots, dx_{m}),
      $$
      where~$\Pi$ is a stochastic kernel on the first coordinate, conditional on $(x_2,\dots, x_{m})$. Now, for any $f=f(x_2,\dots,x_{m})$, the log-Sobolev inequality from~$(i)$, applied to the function $\tilde{f}(x_1,\dots, x_m):=f(x_2,\dots, x_m)$, yields
       \begin{equation}    
      \begin{aligned}        Ent_{\mu_1}(f^2)&=\int_{\R^{m-1}}f^2\log\bigg(\frac{f^2}{\int f^2}\bigg)\bigg(\int_{\R}\pi(dx_1|x_2,\dots, x_m)\bigg)d\mu_1 \\&=   
            Ent_{\mu}(\tilde{f}^2)\\&\leq 2c\int_{\R^{m}}\big|\nabla\tilde{f}|^2Gd\mu\\&=2c\int_{\R^{m}}\big|\nabla f(x_2, \dots, x_m)\big|^2G(x_1, \dots, x_m)d\mu(x_1, \dots, x_m)\\&
            =2c\int_{\R^{m-1}}\big|\nabla f(x_2, \dots, x_m)\big|^2G_1(x_2, \dots, x_m)d\mu_1(x_2, \dots, x_m),
             \end{aligned}   
      \end{equation}
      where $G_1:=2c\int_{\R}G(x_1,x_2 \dots, x_m)\pi(dx_1|x_2,\dots,x_{m})$
      is finite $\mu_1$-a.e. since $\tilde{G}\in L^1(\mu_1)$.
\end{proof} 
\noindent Setting $\Psi\equiv1$ we recover the contraction principle from~\cite[Lemma 6.1]{bartl2020functional} for Lipschitz transformations of the Wiener measure. 
Our generalisation covers non-linear transformations with polynomial growth, and in particular, allows us to prove weighted~$\mathrm{LSIs}$ for functionals of elements in the m-th Wiener chaos over~$\Omega$. The following theorem summarises some useful consequences of WLSIs:

\begin{thm}\label{thm:WLSIconsequences} Let $\mu\in\mathscr{P}(\R^m)$ satisfy $WLSI(G)$ and $d_{G}$ denote the weighted Riemannian distance associated to~$G$, i.e. for all $x,y\in\R^m$ and $C_{xy}$ the set of all absolutely continuous paths $\gamma:[0,1]\rightarrow \R^m$ with $\gamma(0)=x, \gamma(1)=y$,
    $$d_{G}(x,y):=\inf_{\gamma\in C_{xy}}\int_{0}^{1}\sqrt{G^{-1}\big(\gamma(t)\big)|\gamma'(t)|^2dt}.$$  Then the following hold:
\begin{enumerate}
    \item (Bobkov-Ledoux \cite{bobkov1999exponential}) If $G\in L^p(\mu)$, for some $p\geq 2$, then for all $f:\R^m\rightarrow\R$, $\mu$-centered and $1$-Lipschitz one has $\|f\|_{L^p(\mu)}\leq \sqrt{p-1}\|G\|_{L^p(\mu)}$.
    \item (Cattiaux-Guillin-Wu \cite{cattiaux2011some})  $\mu\in\mathscr{T}_2(1)$ with respect to $d_{G/2}$.
\end{enumerate}
\end{thm}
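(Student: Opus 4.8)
The plan is to obtain both parts by transcribing well-known arguments for weighted log-Sobolev inequalities into the normalisation $Ent_\mu(f^2)\le 2C\int_{\R^m}|\nabla f|^2\,G\,d\mu$ of $\WLSI(G)$; neither part requires a genuinely new idea, only careful tracking of the weight~$G$ and of constants, and both are in essence the arguments of~\cite{bobkov1999exponential} and~\cite{cattiaux2011some}.

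For part~(1) I would run the Bobkov--Ledoux moment argument at the level of $L^p$-norms. Linearising $\WLSI(G)$ around constant functions yields the weighted Poincar\'e inequality $\mathrm{Var}_\mu(f)\le C\int|\nabla f|^2 G\,d\mu$, so for a $\mu$-centred $1$-Lipschitz $f$ the case $p=2$ follows from $\|f\|_{L^2(\mu)}^2\le C\|G\|_{L^1(\mu)}<\infty$. For $p>2$, apply $\WLSI(G)$ to $|f|^{p/2}$ (a legitimate test function after a standard truncation-and-limit argument based on the $p=2$ bound): since $f$ is $1$-Lipschitz one has $|\nabla|f|^{p/2}|^2\le\tfrac{p^2}{4}|f|^{p-2}$ a.e., so H\"older's inequality with exponents $\tfrac{p}{p-2}$ and $\tfrac p2$ gives
\begin{equation}
Ent_\mu(|f|^p)\le \tfrac{Cp^2}{2}\int_{\R^m}|f|^{p-2}G\,d\mu\le\tfrac{Cp^2}{2}\,\|f\|_{L^p(\mu)}^{\,p-2}\,\|G\|_{L^{p/2}(\mu)}.
\end{equation}
Writing $N(p):=\|f\|_{L^p(\mu)}$ and using the elementary identity $Ent_\mu(|f|^p)=p^2 N(p)^{p-1}N'(p)$ converts this into the differential inequality $N(p)N'(p)\le\tfrac C2\|G\|_{L^{p/2}(\mu)}$; integrating in~$p$ from~$2$, using monotonicity of the $L^q(\mu)$-norms, and inserting the $p=2$ estimate yields $\|f\|_{L^p(\mu)}^2\le C(p-1)\|G\|_{L^{p/2}(\mu)}$, which is the Bobkov--Ledoux moment bound of~\cite{bobkov1999exponential}. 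The only point needing attention is the approximation ensuring that $|f|^{p/2}$ may be fed into $\WLSI(G)$ with all quantities finite.

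For part~(2) I would follow the Hamilton--Jacobi approach of Bobkov--Gentil--Ledoux. The central object is the weighted Hopf--Lax semigroup
\begin{equation}
Q_tf(x):=\inf_{y\in\R^m}\Big\{f(y)+\frac{1}{2t}\,d_G(x,y)^2\Big\},\qquad t>0,
\end{equation}
which solves, in the viscosity / a.e.\ sense, the weighted Hamilton--Jacobi equation $\partial_tu+\tfrac12\,G\,|\nabla u|^2=0$ with $u(0,\cdot)=f$; here $d_G$ is the weighted Riemannian distance of the statement, and $d_{G/2}^2=2\,d_G^2$. For a bounded Lipschitz~$f$ and the value $s>0$ matched to the constant $2C$ in $\WLSI(G)$, set $\Lambda(t):=\tfrac1s\log\int_{\R^m}e^{sQ_tf}\,d\mu$. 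Differentiating in~$t$, substituting the Hamilton--Jacobi equation for $\partial_tQ_tf$, and inserting $e^{\frac s2 Q_tf}$ into $\WLSI(G)$ shows $\Lambda'(t)\le 0$, hence $\Lambda(1)\le\Lambda(0^+)=\int f\,d\mu$, that is $\int e^{sQ_1f}\,d\mu\le e^{s\int f\,d\mu}$. For $\nu\ll\mu$, the Donsker--Varadhan variational formula for the entropy followed by Kantorovich duality for the cost $\tfrac12 d_G(\cdot,\cdot)^2$ then turn this into $W_{d_{G/2}^2}(\mu,\nu)\le H(\nu\;|\;\mu)$, i.e.\ $\mu\in\mathscr{T}_2(1)$ with respect to $d_{G/2}$; the factor~$2$ built into $d_{G/2}$ is precisely what reconciles the normalisations, exactly as in~\cite{cattiaux2011some}. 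Equivalently, one may observe that $\WLSI(G)$ \emph{is} the ordinary log-Sobolev inequality for $\mu$ on the Riemannian manifold $(\R^m,G^{-1}\,\mathrm{Id})$, whose geodesic distance is $d_G$, and invoke the Riemannian Otto--Villani theorem there, keeping track of how the rescaling $G\mapsto G/2$ produces the stated Talagrand constant.

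The step I expect to be the main obstacle is, as is usual for Otto--Villani-type results, part~(2): verifying that the weighted Hopf--Lax formula genuinely solves the weighted Hamilton--Jacobi equation and that $t\mapsto Q_tf$ is regular enough to differentiate $\Lambda$ and to insert $e^{\frac s2 Q_tf}$ into $\WLSI(G)$. This is complicated by the fact that~$G$ may vanish or take the value~$+\infty$ (so $G^{-1}$ is singular and $d_{G/2}$ may equal~$+\infty$) and by the non-compactness of~$\R^m$; the standard remedy is to establish the inequality first for smooth, compactly supported data and then pass to the limit, or to work on the warped manifold $(\R^m,G^{-1}\,\mathrm{Id})$ and import the Riemannian statement directly.
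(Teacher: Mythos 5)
The paper itself does not prove Theorem~\ref{thm:WLSIconsequences}: the parenthetical attributions ``(Bobkov-Ledoux~\cite{bobkov1999exponential})'' and ``(Cattiaux-Guillin-Wu~\cite{cattiaux2011some})'' signal that both items are quoted from the literature, and no proof environment follows the statement. So there is no in-paper argument to compare yours against; what you have written is a reconstruction of the cited proofs, and on that basis it is broadly sound, with two points worth flagging.

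For part~(1), your moment-iteration is exactly the Bobkov--Ledoux mechanism: apply $\WLSI(G)$ to $|f|^{p/2}$, use $|\nabla |f|^{p/2}|^2\le \tfrac{p^2}{4}|f|^{p-2}$, H\"older with exponents $\tfrac{p}{p-2},\tfrac p2$, the identity $Ent_\mu(|f|^p)=p^2 N(p)^{p-1}N'(p)$, integrate, and feed in the $p=2$ Poincar\'e step. This is correct, but what it yields is $\|f\|_{L^p(\mu)}^2\le C(p-1)\,\|G\|_{L^{p/2}(\mu)}$, i.e. $\|f\|_{L^p}\lesssim\sqrt{p-1}\,\|G\|_{L^{p/2}}^{1/2}$, which is \emph{not} the same as the paper's displayed bound $\|f\|_{L^p}\le\sqrt{p-1}\,\|G\|_{L^p}$ (different power of $\|G\|$ and a different Lebesgue index). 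A scaling check ($G\mapsto\lambda G$ sends $\|f\|_{L^p}\mapsto\sqrt\lambda\,\|f\|_{L^p}$ but $\|G\|_{L^p}\mapsto\lambda\|G\|_{L^p}$) shows your form is the dimensionally consistent one, so the paper's display appears to carry a typographical slip; you should not try to force your argument to land on it as written.

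For part~(2), your weighted Hopf--Lax/Hamilton--Jacobi route is the Bobkov--Gentil--Ledoux mechanism and it is indeed how the weighted Otto--Villani statement is obtained; you correctly identify the genuine difficulty (rigour of the weighted Hopf--Lax formula on the possibly incomplete conformal metric $G^{-1}\,\mathrm{Id}$, and passage to the limit from nice test data), and your observation $d_{G/2}=\sqrt2\,d_G$ is the right reconciliation of normalisations. One thing you leave implicit and should state: the constant $C$ in $\WLSI(G)$ must be tracked through the Hamilton--Jacobi step, since $\mathscr T_2(1)$ with respect to $d_{G/2}$ encodes a particular relationship between $C$ and the Talagrand constant (with the paper's normalisation this forces a specific value of $C$, or, equivalently, $C$ should be absorbed into $G$). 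As stated in the paper, Theorem~\ref{thm:WLSIconsequences}(2) suppresses this dependence on $C$, which is another sign that the display is to be read up to the normalisation of $G$.
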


\begin{rem}i) Without loss of generality, it suffices to assume that the weight $G$ is strictly positive, so that the metric $d_G$ above is well-defined. ii) In view of Theorem~\ref{thm:WLSIconsequences}(1), WLSIs are weaker functional inequalities compared to TCIs. In particular, WLSIs only imply finiteness of moments, while TCIs (Proposition~\ref{alphaprop}) imply finiteness of exponential moments for a measure of interest.
\end{rem}

A wide class of Gaussian functionals whose law satisfies a $\WLSI$ is given below. 

\begin{Ex}(Polynomial functionals of Gaussian processes) Let $T>0$ and~$\gamma$ be the law of a one-dimensional continuous, non-degenerate Gaussian process~$X$ on $\Cc_0[0,T]$,
$\h$ its Cameron-Martin space and~$\h'$ the Hilbert space of deterministic integrands with respect to~$X$.
For $\{h_k\}_{k=1}^{m}\subset\h'$, 
let~$ X(h_k)$ denote the Wiener integral $\int h_kdX$ and consider the random vector 
$$
\Psi:=\Big(X^{p_1}(h_1),\dots, X^{p_m}(h_m)\Big),
$$
for some $p_1,\ldots, p_m\geq 1$.
The functional $\Psi$ is Malliavin differentiable with $$
D\Psi = \Big(p_1 X^{p_1-1}(h_1)i(h_1), \dots, p_m X(h_m)^{p_m-1}i(h_m)\Big),
$$
where $i: \h' \rightarrow \h$ is a Hilbert-space isometry and
$$
\|D\Psi\|_{\h}
\leq \left(\max_{k=1,\dots, m}{p_k}\|i(h_k)\|_{\h}\right)\sum_{k=1}^{m}\left|X(h_k)^{p_k-1}\right|
\leq C\left(1+\sum_{k=1}^{m}\left|X(h_k)^{p_k}\right|\right).
$$
Thus, in view of Proposition~\ref{Prop:logSobolev} the law~$\mu$ of $\Psi$ satisfies $\WLSI(G)$ with $G(x)=(1+|x|_{\ell^1})^{2}$.
Applying the same proposition, along with the product rule for Malliavin derivatives, it is straightforward to deduce that any polynomial in~$m$ variables of~$\Psi$ satisfies a $\WLSI$ with an appropriate weight function.
\end{Ex}

\begin{rem} 
For a standard Wiener process~$X$, then 
$\h=H^1_0[0,T]$, i.e. the space of absolutely continuous functions $f$ with a square-integrable weak derivative and $f(0)=0$, $\h'=L^2[0,T]$ and $i(h)=\int_{0}^{\cdot}h_sds$.     
\end{rem}

\begin{Ex}(Gaussian rough paths) Let $T>0$ and~$X$ be a $d$-dimensional continuous Gaussian process on $[0,T]$ that lifts to an $\alpha$-H\"older geometric rough path $\mathbf{X}=(X, \mathbb{X})$ (Definition~\ref{dfn:RP}).
Here, for each $0\leq s\leq t\leq T$,
$\mathbb{X}_{s,t}=\int_{s}^{t}(X_r-X_s)\otimes dX_r$.
For fixed $s,t$ consider the $\R\times\R^d\times\R^{d\otimes d}$-
valued functional
$$
\Psi=(\Psi_1, \Psi_2,  \Psi_3)=\big(\|X\|_{\Cc^\alpha}, \mathbf{X}_{s,t})=\big(\|X\|_{\Cc^\alpha}, X_{s,t}, \mathbb{X}_{s,t}),
$$
defined on the abstract Wiener space $(\Cc^\alpha([0,T]; \R^d), \h, \gamma)$, where, as in the previous example, $\gamma, \h$ denote the law of~$X$ on $\Cc^\alpha([0,T]$ and its Cameron-Martin space respectively.

Regarding the Malliavin differentiability of the first component, note that $\|\cdot\|_{\Cc^\alpha}$ is $\h$-Lipschitz continuous in the sense of~\cite{enchev1993rademacher} (see also~\cite[Exercise 1.2.]{nualart2006malliavin}). Hence, it is indeed differentiable and the triangle inequality gives the estimate
$$
\|D\Psi_1\|_{\h}=\|D\|X\|_{\Cc^\alpha}\|_{\h}\leq 1.
$$

Turning to $\Psi_3$, we show that it is $\h$-continuously Fr\'echet differentiable. To this end let $\epsilon>0$, $h\in\h$, $\omega\in\Omega$ and note that 
$$
\frac{\mathbb{X}_{s,t}(\omega+\epsilon h)-\mathbb{X}_{s,t}(\omega)}{\epsilon} = \int_{s}^{t}X_{s,r}\otimes dh_r+\int_{s}^{t} h_{s,r}\otimes X_r+\epsilon\int_{s}^{t} h_{s,r}\otimes dh_r,
$$
where all the terms on the right-hand side are well-defined Young integrals. Hence the directional derivative 
$$
D^h\mathbb{X}_{s,t}:=\frac{d}{d\epsilon}\mathbb{X}_{s,t}(\omega+\epsilon h)\bigg|_{\epsilon=0}
$$
exists and is linear in~$H$ by linearity of the integrals. Moreover, standard Young estimates along with complementary Cameron-Martin regularity furnish
$$  \big|D^h\mathbb{X}_{s,t}\big|\leq C\|X\|_{\Cc^{\alpha}}\|h\|_{\h},$$ where $C>0$ is a constant that depends on the embedding $ \h\hookrightarrow \Cc^{q-var}$, for all $q$ such that $1/q+1/(2\rho)>1$ and $\alpha<1/(2\rho)$ (for more details on such estimates we refer the interested reader to~\cite[Section~2.2]{friz2010generalized}). 
As a result, the linear operator $\h\ni h\mapsto D^h\mathbb{X}_{s,t}\in\R^{d\otimes d}$ is bounded $\gamma$-a.e. which in particular means that $\mathbb{X}_{s,t}$ is $\h$-continuously Fr\'echet differentiable. Combining the latter with the square-integrability of $\mathbb{X}_{s,t}$, we deduce from~\cite[Lemma~4.1.2 and Proposition~4.1.3]{nualart2006malliavin} that $\Psi_{3}$ is Malliavin differentiable with
$$
\|D\Psi_3\|_{\h}=\sup_{\|h\|_{\h}\leq 1}|D^h\Psi_3|\leq C\|X\|_{\Cc^\alpha}.
$$
The same property holds trivially for $\Psi_2$ since
$h\mapsto X_{s,t}+h_{s,t}$ is Lipschitz continuous and hence $\|D\Psi_2\|_{\h}\leq C$ for a constant $C$ that depends on the embedding $\h\hookrightarrow\Omega$. Putting these estimates together we conclude that $$ \|D\Psi\|_{\h}\leq \sum_{k=1}^{3}\|D\Psi_k\|_{\h}\leq C\bigg( 1+ \|X\|_{\Cc^\alpha} \bigg)=C(1+\Psi_1),$$
hence, in view of Proposition~\ref{Prop:logSobolev}(i), the law~$\mu$ of $\Psi$ satisfies $\WLSI(G)$
with weight function $G:\R\times\R^d\times\R^{d\otimes d}\rightarrow [0, \infty]$, $G(x_1, \mathbf{x}_2)=(1+x_1)^2$.

Finally, it is possible to obtain a $\WLSI$ for the law  $\mu_1$ of the Gaussian rough path $\mathbf{X}_{s,t}=(\Psi_2, \Psi_3)$, for fixed times $s,t\in [0,T]$, by noting that 
$\|G\|_{L^1(\mu)}=\ex[1+\|X\|^2_{\Cc^\alpha} ]<\infty$.
Proposition~\ref{Prop:logSobolev}(ii) then implies that $\mu_1$ satisfies $\WLSI(\tilde{G})$ with $\tilde{G}(\mathbf{x_2})=\int_{\R}(1+x_1)^2\pi(dx_1|\mathbf{x_2})$.
\end{Ex}

At this point we recall a slight generalisation of~\cite[Proposition 4.1.3]{nualart2006malliavin}, that we need for our last example.

\begin{lem} \label{Lem:Dloc} Let $\Psi:\Omega\rightarrow \R^m$ be an $\h$-locally Lipschitz continuous functional in the sense that there exists a real-valued random variable $\Phi$ that is $\gamma$-a.e. finite and such that 
\begin{equation}\label{eq:H-lip}
    |\Psi(\omega+h)-\Psi(\omega)|\leq |\Phi(\omega)|\|h\|_{\h}
\end{equation}
holds for all~$H$ in bounded sets of $\h$ $\gamma$-a.e.
Then $\Psi$ is locally Malliavin differentiable a.e. in the sense that there exist sequences $A_n\subset\Omega$ of measurable sets with $A_n\uparrow A\subset \Omega$, $\gamma(A)=1$,  and Malliavin differentiable functionals $\Psi_n:\Omega\to\R^m$ such that $\Psi=\Psi_n$ on~$\Omega_n$ (the derivative is then defined by $D\Psi:=D\Psi_n$ on~$\Omega_n$).  
Moreover, if $\Phi\in L^2(\Omega)$ then $D\Psi\in L^2(\Omega)$.
\end{lem}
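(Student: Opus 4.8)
The plan is to prove the statement by the standard localisation device: to exhibit an increasing sequence of measurable sets $A_n\uparrow A$ with $\gamma(A)=1$ together with (globally) Malliavin differentiable functionals $\Psi_n:\Omega\to\R^m$ that coincide with $\Psi$ on $A_n$. First I would put the hypothesis into a convenient form: intersecting over $R\in\N$ the $\gamma$-full-measure sets on which~\eqref{eq:H-lip} holds for $\|h\|_{\h}\le R$, and using quasi-invariance of $\gamma$ under Cameron--Martin shifts, one obtains a full-measure set $B\subset\Omega$ such that for every $\omega\in B$ the map $\h\ni h\mapsto\Psi(\omega+h)$ is Lipschitz on all of $\h$ with constant $|\Phi(\omega)|$. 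Setting $A_n:=B\cap\{|\Phi|\le n\}$ and discarding the finitely many indices with $\gamma(A_n)=0$, these sets increase to $A:=B\cap\{|\Phi|<\infty\}$, which is $\gamma$-full since $\Phi$ is a.e.\ finite.

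The heart of the argument is the construction of $\Psi_n$. I would take a McShane-type extension of $\Psi|_{A_n}$ in the Cameron--Martin pseudo-metric $d_{\h}$ (with $d_{\h}(\omega,\omega'):=\|\omega-\omega'\|_{\h}$ when $\omega-\omega'\in\h$ and $d_{\h}(\omega,\omega'):=\infty$ otherwise), but with a state-dependent Lipschitz weight, componentwise:
\[
\Psi_n^{(i)}(\omega):=\inf_{\omega'\in A_n}\Big\{\Psi^{(i)}(\omega')+|\Phi(\omega')|\,d_{\h}(\omega,\omega')\Big\},\qquad i=1,\dots,m.
\]
The base-point bound~\eqref{eq:H-lip} is precisely what makes this well posed: for $\omega,\omega'\in A_n$ one has $\Psi^{(i)}(\omega')+|\Phi(\omega')|\,d_{\h}(\omega,\omega')\ge\Psi^{(i)}(\omega)$ (apply~\eqref{eq:H-lip} at the base point $\omega'$), so the infimum is attained at $\omega'=\omega$ and $\Psi_n=\Psi$ on $A_n$; and since $|\Phi|\le n$ on $A_n$, each $\Psi_n^{(i)}$ is globally $n$-Lipschitz for $d_{\h}$, in particular $\Psi_n^{(i)}(\omega+h)\le\Psi_n^{(i)}(\omega)+n\|h\|_{\h}$ for all $h\in\h$. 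Moreover $\Psi_n$ is $\gamma$-a.e.\ finite, because the set on which it is finite coincides with $A_n+\h$, which is analytic, hence universally measurable, and $\h$-translation invariant, hence of $\gamma$-measure $0$ or $1$ by the Cameron--Martin zero--one law, and of positive measure since $\gamma(A_n)>0$. By the generalised Fernique theorem (\cite[Theorem~11.7]{friz2020course}, as applied elsewhere in this work) each $\Psi_n$ then has Gaussian tails, so $\Psi_n\in\bigcap_{p\ge1}L^p(\Omega;\R^m)$; and by the Rademacher theorem for $\h$-Lipschitz Wiener functionals (\cite{enchev1993rademacher}; cf.\ also \cite[Proposition~4.1.3]{nualart2006malliavin}) $\Psi_n$ is Malliavin differentiable with $\|D\Psi_n\|_{\h}\lesssim n$ a.e.

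Since $\Psi=\Psi_n$ on $A_n$ and $A_n\uparrow A$ of full measure, the first assertion follows, and $D\Psi:=D\Psi_n$ on $A_n$ is unambiguous by the local property of the Malliavin derivative. For the last assertion one sharpens the gradient bound on $A_n$: the Malliavin derivative of the $\mathbb{D}^{1,2}$ functional $\Psi_n$ agrees $\gamma$-a.e.\ with its stochastic G\^ateaux derivative, so for fixed $h\in\h$ and a.e.\ $\omega\in A_n$, $\langle D\Psi_n^{(i)}(\omega),h\rangle_{\h}=\lim_{\varepsilon\downarrow0}\varepsilon^{-1}\big(\Psi_n^{(i)}(\omega+\varepsilon h)-\Psi_n^{(i)}(\omega)\big)$; taking $\omega'=\omega$ in the infimum gives $\Psi_n^{(i)}(\omega+\varepsilon h)\le\Psi^{(i)}(\omega)+|\Phi(\omega)|\varepsilon\|h\|_{\h}$, while $\Psi_n^{(i)}(\omega+\varepsilon h)\ge\Psi^{(i)}(\omega+\varepsilon h)\ge\Psi^{(i)}(\omega)-|\Phi(\omega)|\varepsilon\|h\|_{\h}$ by~\eqref{eq:H-lip} at $\omega\in B$, whence $|\langle D\Psi_n(\omega),h\rangle_{\h}|\lesssim|\Phi(\omega)|\|h\|_{\h}$. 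Running over a countable dense set of $h$ and using continuity of $\langle D\Psi_n(\omega),\cdot\rangle_{\h}$ yields $\|D\Psi(\omega)\|_{\h}\lesssim|\Phi(\omega)|$ for a.e.\ $\omega\in A$, so that $\int_{\Omega}\|D\Psi\|_{\h}^2\,d\gamma\lesssim\int_{\Omega}|\Phi|^2\,d\gamma<\infty$ when $\Phi\in L^2(\Omega)$.

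The main obstacle is the construction itself, and within it the Borel measurability of $\Psi_n$: since $d_{\h}$ is strictly finer than the Banach-norm topology of $\Omega$, the defining infimum runs over a set that need not be $d_{\h}$-separable, and one must reduce it to a countable subfamily (or pass to a measurable modification) to conclude that $\Psi_n$ is measurable. Once measurability together with the two identities $\Psi_n=\Psi$ on $A_n$ and $\|D\Psi_n\|_{\h}\lesssim n$ a.e.\ are secured, the remainder is routine, the cited generalised Fernique and Rademacher theorems and the local property of $D$ closing the argument.
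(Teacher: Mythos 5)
Your route is genuinely different from the one the paper takes. The paper's proof is essentially a proof by reference: it cites Nualart's Proposition~4.1.3 (the mollification/approximation argument for $\h$-continuously differentiable functionals), indicates the cosmetic substitutions ($\h$-differentiability $\to$ $\h$-local Lipschitz continuity, $\nabla\Psi \to \Phi$), and uses the localizing sequence
\[
A_n=\bigg\{\omega\in\Omega:\ \sup_{\|h\|_\h\le 1/n}|\Psi(\omega+i(h))|\le n,\ |\Phi(\omega)|\le n\bigg\},
\]
which simultaneously controls a local bound on $\Psi$ (needed so that Nualart's finite-dimensional convolution is well defined) and the Lipschitz constant. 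In Nualart's scheme the approximating functionals $\Psi_n$ are built by finite-dimensional conditional expectation/convolution, so their measurability and $L^2$ membership are automatic. You instead build $\Psi_n$ by a weighted McShane extension of $\Psi|_{A_n}$ with respect to the Cameron--Martin pseudometric and then appeal to the generalised Fernique theorem for integrability and to the Enchev--Stroock Rademacher theorem for Malliavin differentiability.

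The pointwise algebra of your extension is fine: \eqref{eq:H-lip} applied at the base point $\omega'$ does show $\Psi_n^{(i)}\ge\Psi^{(i)}$ on $A_n+\h$ (hence $\Psi_n=\Psi$ on $A_n$), the $n$-Lipschitz bound for $d_\h$ holds since $|\Phi|\le n$ on $A_n$, the zero--one/Fernique argument for a.e.\ finiteness and $L^p$ integrability is correct, and the final two-sided G\^ateaux estimate that yields $\|D\Psi\|_\h\lesssim|\Phi|$ on $A$ (hence $D\Psi\in L^2$ when $\Phi\in L^2$) is sound and actually gives a cleaner derivation of the last assertion than the sketch in the paper. However, there is a genuine gap, which you yourself flag but do not close: the measurability of $\Psi_n$. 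The Rademacher theorem you invoke requires $\Psi_n$ to be a $\gamma$-measurable $\h$-Lipschitz \emph{function}, but the infimum in the McShane formula runs over $A_n$ equipped with the Cameron--Martin pseudometric, which is non-separable on $\Omega$ and for which no countable dense subset of $A_n$ is available in general. Without a measurable selection or a reduction to a countable subfamily, the extension is only a pointwise-defined envelope, and neither Fernique nor Enchev--Stroock can be applied to it. This is not a routine gap: it is exactly the difficulty that Nualart's mollification sidesteps, and it is the reason the paper takes that route. As written, your proof is incomplete; either the measurability step must be supplied (e.g.\ by showing that a suitable CM-$\sigma$-compact exhaustion of $A_n$ exists, or by replacing the McShane envelope with a measurable modification and verifying the Lipschitz and agreement-on-$A_n$ properties survive), or one should fall back on the paper's mollification argument.
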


\begin{proof} The proof is essentially the same as that of~\cite{nualart2006malliavin}, Proposition 4.1.3. In particular, the same arguments work by replacing the condition of  $\h$-differentiability with $\h$-local Lipschitz continuity, the $\h$-Fr\'echet derivative $D\Psi$ by the random variable~$\Phi$ and work with the localising sequence 
$$
A_n=\bigg\{ \omega\in \Omega: \sup_{\|h\|_\h\leq1/n}|\Psi(\omega+i(h))|\leq n,\;|\Phi(\omega)|\leq n      \bigg\}, \quad n\in\N.
$$
Note that the finiteness of $\Phi$ guarantees that $A=\bigcup_{n\in\N}A_n$ is a set of probability~$1$. 
These arguments imply existence of the Malliavin derivative of $\Psi_n$. Moreover, we obtain the almost sure estimate $\|D\Psi_n\|_{\h}\leq C_n$ (where $C_n\rightarrow\infty$ as $n\to\infty)$, hence $D\Psi_n\in L^2(\Omega)$. To conclude that $D\Psi\in L^2(\Omega)$, we take advantage of the a.s. existence and linearity of the map
$$h\longmapsto D^h\Psi_n=\frac{d}{d\epsilon}\Psi_n(\omega+\epsilon h)\bigg|_{\epsilon=0}.$$
In view of~\eqref{eq:H-lip} it follows that this linear map is bounded and thus we obtain
$$ \|D\Psi_n\|_{\h}=\sup_{\|h\|_\h\leq 1}|D^h\Psi_n|\leq |\Phi|,\;\; \gamma\text{-a.e}.
$$
Since this bound is uniform in $n$ and $D\Psi=D\Psi_n$ on $\Omega_n$, the conclusion follows.
\end{proof} 

\begin{Ex}(Gaussian RDEs) Let $p\in (1, 3)$, $T>0$ and~$Y$ solve the RDE
\begin{equation}
    \label{Eq: RDE}
    dY=V(Y)d\mathbf{X}, \quad Y_0\in\R^m,
\end{equation}
on the interval $[0,T]$.
Here, for simplicity, we assume that $V=(V_1,\dots, V_d)$ is a collection of $C_b^\infty$ vector fields on $\R^m$ and, as in the previous example,~$X$ is a continuous Gaussian process that lifts to a geometric $p$-(or $1/p$-)H\"older rough path $\mathbf{X}$. With the same abstract Wiener space as in the previous example, consider the $\R^{m+1}$-valued functional
$$\Psi=(\Psi_1, \Psi_2)=(\|\mathbf{X}\|_{\text{p-var}}, Y_T),$$
where $\|\mathbf{X}\|_{\text{p-var}}$ is the (inhomogeneous) $p$-variation "norm" (Definition~\ref{dfn:RP}).
Regarding the differentiability of $\Psi_1$, the triangle inequality along with the estimates on the shifted rough path from the previous example furnish
\begin{equation}
\begin{aligned}
    |\Psi_1(\omega+h)-\Psi_1(\omega)|&\leq \bigg[\sup_{(t_i)\in\mathcal{P}[0,T]}\sum_{i}\bigg(|h_{t_i,t_{i+1}}|+\big|\mathbb{X}_{t_i,t_{i+1}}(\omega+h)-\mathbb{X}_{t_i,t_{i+1}}(\omega)\big| \bigg)^{p}\bigg]^\frac{1}{p}\\&
\leq C(\|h\|_{\h}+ 2\|X\|_{\text{p-var}}\|h\|_{\h}+\|h\|^2),
\end{aligned}
\end{equation}
for a constant that depends on the various embeddings of the underlying $p$-variation spaces. Since $\|X\|_{\text{p-var}}$ is square-integrable, we deduce that, for all~$H$ in bounded sets of $\h$, satisfies the assumptions of Lemma~\ref{Lem:Dloc}. Thus we have 
$$ \|D\Psi_1\|_{\h}\leq \|X\|_{\text{p-var}}\leq  \|\mathbf{X}\|_{\text{p-var}} ,\;\; \gamma-a.e.$$
Turning to $\Psi_2$, the functional~$Y_T$ is $\gamma$-a.s. continuously $\h$-differentiable and 
\begin{equation}
     D^hY_t=\int_{0}^{t}J_{t\leftarrow s}^{\mathbf{X}}V_j(Y_s)dh_s,
\end{equation}
where $J_{t\leftarrow s}^{\mathbf{X}}$ is the Jacobian of the solution flow of~\eqref{Eq: RDE} with respect to the initial condition. The latter satisfies itself a linear RDE and satisfies the estimate
$$    \|J_{\cdot\leftarrow 0}^{\mathbf{X}}\|_{\text{p-var}}\leq C\exp\bigg(c\|\mathbf{X}\|^p_{\text{p-var}}\bigg)$$
for some constants $c,C>0$ (see e.g.~\cite[Equation~(20.17)]{friz2010multidimensional}; notice that the estimate there is formulated in terms of the homogeneous $p$-variation norm which is bounded above by the homogeneous one considered here). In view of the latter, along with the shift invariance of the Jacobian flow and the boundedness of the vector fields~$V$, one has 
$$\|D\Psi_2\|_{\h}=\|DY_T\|_{\h}\leq C_T\exp\bigg(c\|\mathbf{X}\|^p_{\text{p-var}}\bigg). $$
Thus $$  \|D\Psi\|_{\h}\leq C\bigg(\|\mathbf{X}\|+\exp\bigg(c\|\mathbf{X}\|^p_{\text{p-var}}\bigg)\bigg), $$
which in view of Proposition~\eqref{Prop:logSobolev}(i) implies that the law of $\Psi$ satisfies a $\WLSI$ with weight function $G(x)=(x_1+e^{x_1^p})$.
 \end{Ex}

\appendix
\section{Technical proofs}\label{Section:App}
This section is devoted to the proofs of some technical lemmas used throughout this work. As is customary, we remark that values of unimportant constants may change from line to line without a change in notation.
\subsection{Proof of Lemma~\ref{shiftlem}}
\label{proof:shiftlem}
From the definition of the distance~\eqref{twobar} it suffices to obtain estimates for the basis elements $\mathcal{S}\subset\Tt$. 
Let $\lambda\in(0,1]$, $T>0$, $s\in[0,T]$, $h_1, h_2\in\h$ and $\phi\in C_c^1(\R)$ such that 
$\|\phi\|_{\Cc^1}\leq 1$ and $\supp(\phi)\subset(-1,1)$. 
Starting with the symbol~$\Xi$,
\begin{equation}\label{Lip1}
\begin{aligned}
\big|\left\langle \big(T_{h_2}\Pi^{\xi}_{s}-T_{h_1}\Pi^{\xi}_{s}\big)\Xi , \phi^\lambda_s\right\rangle \big|
  & = \left|
  \left\langle \dot{W}+h_2, \phi_s^\lambda\right\rangle - \left\langle \dot{W}+h_1,\phi_s^\lambda\right\rangle\right|
  \\
  &=\frac{1}{\lambda}\bigg|\int_{(s-\lambda, s+\lambda)} 
  \big( h_2(t)-h_1(t) \big)\phi(\lambda^{-1}(t-s))  dt\bigg|\\&\leq\lambda^{-1}\|h_2-h_1\|_{\h} \bigg(\lambda \int_{(-1, 1)}\phi(z)^2 dz\bigg)^{\half}\\&\leq \sqrt{2} \lambda^{-\half}\|\phi\|_{\Cc^1}\|h_2-h_1\|_{\h}\leq \sqrt{2} \lambda^{-\half}\|h_2-h_1\|_{\h},
\end{aligned}	
\end{equation}
where we used Cauchy-Schwarz and $z=\lambda^{-1}(t-s)$.
Next let $m\in\{1,\dots, M\}$ and consider
\begin{align*}
	&\big|\big\langle \big(T_{h_2}\Pi^{\xi}_{s}-T_{h_1}\Pi^{\xi}_{s}\big)I(\Xi)^m, \phi^\lambda_s\big\rangle\big|\\
&=\frac{1}{\lambda}\bigg|\int_{(s-\lambda,s+\lambda )}
\left[\left(\widehat{W}^H(t)-\widehat{W}^H(s)+\widehat{h}_2(t)-\widehat{h}_2(s)\right)^m-\left(\widehat{W}^H(t)-\widehat{W}^H(s)+\widehat{h}_1(t)-\widehat{h}_1(s)\right)^m \right]\\
&\quad\times\phi(\lambda^{-1}(t-s))dt\bigg|.
\end{align*}
Letting $x_i=\widehat{W}^H(s,t)+\widehat{h}_i(s,t):=\widehat{W}^H(t)-\widehat{W}^H(s)+\widehat{h}_i(t)-\widehat{h}_i(s), i=1, 2$ and using the binomial identity
\begin{equation}\label{binom}
x_2^m= x_1^{m}+\sum_{k=1}^{m}\binom{m}{k}x_1^{m-k}(x_2-x_1)^{k},
\end{equation}
we obtain
	\begin{equation}\label{Lip2con}
	\begin{aligned}
	&\frac{1}{\lambda}\bigg|\int_{(s-\lambda,s+\lambda )} \big[\big(\widehat{W}^H(s,t)+\widehat{h}_2(s,t)\big)^m-\big(\widehat{W}^H(s,t)+\widehat{h}_1(s,t)\big)^m \big]\phi(\lambda^{-1}(t-s))dt\bigg|
	\\&=\frac{1}{\lambda}\bigg|\int_{(s-\lambda,s+\lambda )}\sum_{k=1}^{m}\binom{m}{k}\big[\widehat{W}^H(s,t)+\widehat{h}_1(s,t)\big]^{m-k}\big[   \widehat{h}_2(s,t)-\widehat{h}_1(s,t)   \big]^k\phi(\lambda^{-1}(t-s))dt\bigg|\\&	
	\leq   \lambda^{-1+m(H-\kappa)}\sum_{k=1}^{m}\binom{m}{k}\big\|\widehat{W}^H+\widehat{h}_1\big\|_{\Cc^{H-\kappa}}^{m-k}\big\|   \widehat{h}_2-\widehat{h}_1\big\|_{\Cc^{H-\kappa}}^k\int_{(s-\lambda,s+\lambda )}|\phi(\lambda^{-1}(t-s))|  dt\\&\leq m!\lambda^{-1+m(H-\kappa)}\sum_{k=1}^{m}C_H^k\big\|\widehat{W}^H+\widehat{h}_1\big\|_{\Cc^{H-\kappa}}^{m-k}\big\|   h_2-h_1\big\|_{\h}^k\bigg(\lambda\int_{(-1, 1 )} |\phi(z)|ds \bigg)
	\\&	\leq C_m\left(1\vee C^m_H\right)\lambda^{m(H-\kappa)}\|\phi\|_{\Cc^1}
 \left(1\vee\left\|\widehat{W}^H+\widehat{h}_1\right\|_{\Cc^{H-\kappa}}^{m-1} \right)
 \left(\left\|  h_2-h_1\right\|_{\h}\vee \left\|h_2-h_1\right\|^{m}_{\h}\right),
	\end{aligned}	
	\end{equation}
	where we used the change of variables $z=\lambda^{-1}(t-s)$ and the continuity of the linear operator $K^H:\h\rightarrow  \Cc^H$ and  $C_H$ is an upper bound for the operator norm.
	 Turning to $\Xi I(\Xi)^m$,
\begin{align*}
	&\left\langle \big(T_{h_2}\Pi^{\xi}_{s}-T_{h_1}\Pi^{\xi}_{s}\big)\Xi I(\Xi)^m, \phi^\lambda_s\right\rangle \\
	&
	=\left\langle (\dot{W}+h_2)(\widehat{W}^H-\widehat{W}^H(s)+\widehat{h}_2-\widehat{h}_2(s))^m-(\dot{W}+h_1)(\widehat{W}^H-\widehat{W}^H(s)+\widehat{h}_1-\widehat{h}_1(s))^m, \phi^\lambda_s\right\rangle\\
	&
	=\left\langle (h_2-h_1)(\widehat{W}^H-\widehat{W}^H(s)+\widehat{h}_1-\widehat{h}_1(s))^m, \phi^\lambda_s\right\rangle\\
	&+\left\langle (h_2-h_1)\big[(\widehat{W}^H-\widehat{W}^H(s)+\widehat{h}_2-\widehat{h}_2(s))^m-(\widehat{W}^H-\widehat{W}^H(s)+\widehat{h}_1-\widehat{h}_1(s))^m\big], \phi^\lambda_s\right\rangle\\
	&
	+\left\langle (\dot{W}+h_1)\big[(\widehat{W}^H-\widehat{W}^H(s)+\widehat{h}_2-\widehat{h}_2(s))^m-(\widehat{W}^H-\widehat{W}^H(s)+\widehat{h}_1-\widehat{h}_1(s))^m\big], \phi^\lambda_s\right\rangle
	\\
	&=:I_1+I_2+I_3.
\end{align*}
	An application of the Cauchy-Schwarz inequality then yields
	\begin{equation}\label{I1}
	\begin{aligned}
	|I_1|&=\bigg|\big\langle (h_2-h_1)(\widehat{W}^H-\widehat{W}^H(s)+\widehat{h}_1-\widehat{h}_1(s))^m, \phi^\lambda_s\big\rangle     \bigg|\\&=\frac{1}{\lambda}\bigg|\int_{(s-\lambda, s+\lambda )} \big(h_2(t)-  h_1(t)\big)\big(\widehat{W}^H(s,t)+\widehat{h}_1(s,t)\big)^m\phi(\lambda^{-1}(t-s))dt   \bigg|	\\&
	\leq \lambda^{-1+m(H-\kappa)}\big\|\widehat{W}^H+\widehat{h}_1\big\|^m_{\Cc^{H-\kappa}}\|h_2-h_1\|_{\h}\bigg(\lambda\int_{(-1, 1 )} \phi(z)^2dt \bigg)^{\frac{1}{2}}\\&
	\leq \sqrt{2}\lambda^{-\half+m(H-\kappa)}\|\phi\|_{\Cc^1}\big\|\widehat{W}^H+\widehat{h}_1\big\|^m_{\Cc^{H-\kappa}}\|h_1-h_2\|_{\h}.
	\end{aligned}
	\end{equation}
	For $I_2$ we have the estimate 
	\begin{equation}\label{I2}
	|I_2|\leq C\lambda^{-\half+m(H-\kappa)}
	\left(1\vee\big\|\widehat{W}^H+\widehat{h}_1\big\|_{\Cc^{H-\kappa}}^{m-1} \right)
	\left(\big\|  h_2-h_1\big\|_{\h}\vee \big\|h_2-h_1\big\|^{m+1}_{\h}\right).
	\end{equation}
	This can be proved using very similar arguments as the ones used to obtain~\eqref{Lip2con}. To avoid repetition, its proof will be omitted.
	Turning to $I_3$ we have 
		\begin{equation*}
	\begin{aligned}
	I_3&=\left\langle h_1\big[(\widehat{W}^H-\widehat{W}^H(s)+\widehat{h}_2-\widehat{h}_2(s))^m-(\widehat{W}^H-\widehat{W}^H(s)+\widehat{h}_1-\widehat{h}_1(s))^m\big], \phi^\lambda_s\right\rangle\\&
	+\left\langle \dot{W}\big[(\widehat{W}^H-\widehat{W}^H(s)+\widehat{h}_2-\widehat{h}_2(s))^m-(\widehat{W}^H-\widehat{W}^H(s)+\widehat{h}_1-\widehat{h}_1(s))^m\big], \phi^\lambda_s\right\rangle\\&
	=\frac{1}{\lambda}\int_{(s-\lambda, s+\lambda )}
	\left[(\widehat{W}^H(s,t)+\widehat{h}_2(s,t)-(\widehat{W}^H(s,t)+\widehat{h}_1(s,t))^m\right]
	h_1(t)\phi(\lambda^{-1}(t-s))dt\\
	&
	-\frac{1}{\lambda^2}\int_{(s-\lambda, s+\lambda )}\phi'(\lambda^{-1}(t-s))\int_{s}^{t}\left[(\widehat{W}^H(s,r)+\widehat{h}_2(s,r))^m-(\widehat{W}^H(s,r)+\widehat{h}_1(s,r))^m\right]dW_rdt\\
	& =:J_1+J_2.
		\end{aligned}
	\end{equation*}
In view of~\eqref{binom} we have 
	\begin{equation}\label{J1}
\begin{aligned}
&|J_1| = \frac{1}{\lambda}\bigg|\int_{(s-\lambda, s+\lambda )}\sum_{k=1}^{m}\binom{m}{k}\left[\widehat{W}^H(s,t)+\widehat{h}_1(s,t)\big]^{m-k}\big[\widehat{h}_2(s,t)-\widehat{h}_1(s,t)\right]^{k}h_1(t)\phi(\lambda^{-1}(t-s))dt\bigg|\\&
\leq m!\lambda^{-1+m(H-\kappa)}\sum_{k=1}^{m}\big\|\widehat{W}^H+\widehat{h}_1\big\|_{\Cc^{H-\kappa}}^{m-k}\|\widehat{h}_2-\widehat{h}_1\|^{k}_{\Cc^{H-\kappa}}\int_{(s-\lambda, s+\lambda )}|h_1(t)|\big|\phi(\lambda^{-1}(t-s))\big|dt\\&
\leq m!\lambda^{-\half+m(H-\kappa)}
\left(1\vee \big\|\widehat{W}^H+\widehat{h}_1\big\|_{\Cc^{H-\kappa}}^{m-1}\right)
\left(1\vee C^m_H)\big(\|h_2-h_1\|_{\h}\vee \|h_2-h_1\|_{\h}^{m}\right)\|h_1\|_{\h}\|\phi\|_{L^2(-1,1)},
\end{aligned}
\end{equation}
where we used Cauchy-Schwarz and the continuity of the embedding $\h\hookrightarrow \Cc^H$  once again. 
Finally, let 
$$
M_{st} := \int_{0}^{t}\left[(\widehat{W}^H(s,r)+\widehat{h}_2(s,r))^m-(\widehat{W}^H(s,r)+\widehat{h}_1(s,r))^m\right]dW_r, \quad s\leq t.
$$
By the BDG inequality and~\eqref{binom} we have 
\begin{equation*}
\begin{aligned}
&\ex\left[\big|M_{st}-M_{ss}\big|^p\right]
\leq \left(\int_{s}^{t}\ex\left[\left|\left(\widehat{W}^H(s,r)+\widehat{h}_2(s,r)\right)^m
- \left(\widehat{W}^H(s,r)+\widehat{h}_1(s,r)\right)^m\right|^2\right]dr\right)^{\frac{p}{2}}\\
&
\leq C_m^p\left\{\int_{s}^{t}\left(\sum_{k=1}^{m}
\ex\left[\left|\widehat{W}^H(s,r)+\widehat{h}_1(s,r)\right|^{m-k}\right]\left|\widehat{h}_2(s,r)-\widehat{h}_1(s,r)\right|^{k}\right)^2dr\right\}^{\frac{p}{2}}\\
&
\leq C_m^p\left\{\sum_{k=1}^{m}
\bigg[\ex\big\|\widehat{W}^H+\widehat{h}_1\big\|^{(m-k)}_{\Cc^{H-\kappa}}\|\widehat{h}_2-\widehat{h}_1\|^{k}_{\Cc^{H-\kappa}}\bigg]^{p}\bigg[\int_{s}^{t}(r-s)^{2m(H-\kappa)}dr\bigg]\right\}^{\frac{p}{2}}
\\
&
\leq C^p_{H,m}\left(1\vee \ex\left[\left\|\widehat{W}^H+\widehat{h}_1\right\|_{\Cc^{H-\kappa}}^{p(m-1)}\right]\right)\left(\|h_2-h_1\|^p_{\h}\vee \|h_2-h_1\|_{\h}^{mp}\right)(t-s)^{pm(H-\kappa)+p/2}.
\end{aligned}
\end{equation*}
An application of the Kolmogorov continuity criterion for two-parameter processes~\cite[Theorem 3.13]{friz2020course} furnishes
\begin{equation*}
\begin{aligned}
\big|M_{st}-M_{ss}\big|\leq K_{h_1}\big(\|h_2-h_1\|_{\h}\vee \|h_2-h_1\|_{\h}^{m}\big)(t-s)^{m(H-\kappa)+\half-\kappa}
\end{aligned}
\end{equation*}
almost surely, where $K_{h_1}$ is a random variable with finite moments of all orders (this estimate can also be obtained by the Young bounds used for the proof of Lemma~\ref{Itoshiftlem} in Section~\ref{proof:Itoshiftlem} below). Plugging this estimate into the expression for $J_2$ then yields
	\begin{equation}\label{J2}
\begin{aligned}
|J_2|&\leq \lambda^{-2}\int_{(s-\lambda, s+\lambda )}|\phi'(\lambda^{-1}(t-s))|\big|M_{st}-M_{ss}\big|dt\\&\leq  K\big(\|h_2-h_1\|_{\h}\vee \|h_2-h_1\|_{\h}^{m}\big)\lambda^{-2}\int_{(s-\lambda, s+\lambda )}|\phi'(\lambda^{-1}(t-s))|(t-s)^{m(H-\kappa)+\half-\kappa}dt\\&
\leq K\big(\|h_2-h_1\|_{\h}\vee \|h_2-h_1\|_{\h}^{m}\big)\lambda^{-2+1+m(H-\kappa)+\half-\kappa}\int_{(-1,1)}|\phi'(t)|t^{m(H-\kappa)+\half-\kappa}dt\\&
\leq CK\big(\|h_2-h_1\|_{\h}\vee \|h_2-h_1\|_{\h}^{m}\big)\lambda^{m(H-\kappa)-\half-\kappa}\|\phi\|_{\Cc^1}.
\end{aligned}
\end{equation}
A combination of~\eqref{I1},~\eqref{J1},~\eqref{J2} implies the almost sure bound
$$
\big|\big\langle \big(T_{h_2}\Pi^{\xi}_{s}-T_{h_1}\Pi^{\xi}_{s}\big)\Xi I(\Xi)^m, \phi^\lambda_s\big\rangle\big|\leq C_{m,H}\lambda^{m(H-\kappa)-\half-\kappa}K_{h_1}\|h_2-h_1\|_{\h}\vee \|h_2-h_1\|_{\h}^{m+1},
$$
where $K_{h_1}$ is a random variable with finite moments of all orders. 
The proof follows from this with~\eqref{Lip1}-\eqref{Lip2con}-\eqref{I2} 
and recalling the definition of model distance~\eqref{twobar}.
\subsection{Proof of Lemma~\ref{Itoshiftlem}}\label{proof:Itoshiftlem}
  Let $p\geq(\half-\kappa)^{-1}$. For $t\in[0,T], h\in L^2[0,T]$ we have 
\begin{equation}\label{eq:Itoshift}
\begin{aligned}
     &\int_{s}^{t}f( \widehat{W}^H_r+\widehat{h}_r, r)d\bigg(W_r+\int_{0}^{r}h_zdz\bigg)-\int_{s}^{t}f( \widehat{W}^H_r, r)dW_r\\&= \int_{s}^{t}f_1( \widehat{h}_r, r)f_2(\widehat{W}^H_r,r)h_rdr+\int_{s}^{t}f_1(\widehat{h}_r,r)f_2\big( \widehat{W}^H_r, r\big)dW_r
\end{aligned}   
\end{equation}
From the Besov variation embedding, there exists $q$ with $1/p+1/q>1$ such that $\|\widehat{h}\|_{q-var;[s,t]}\leq C|t-s|^{H-\kappa}\|h\|_{\h}$. Moreover, 
for each $u,v\in[s,t]$, we have by the mean value inequality
\begin{equation}
    \begin{aligned}
        |f_1(\widehat{h}_u,u)-f_1(\widehat{h}_v,v)|^q&\leq C\big|G(\|\widehat{h}\|_{\infty})\big|^q\bigg(  |\widehat{h}_u-\widehat{h}_v|^q+|u-v|^q  \bigg).
    \end{aligned}
\end{equation}
Hence, by monotonicity of $G$ it follows that 
\begin{equation}
    \begin{aligned}
        \big\|f_1(\widehat{h}_\cdot,\cdot)\big\|_{q-var; [s,t]}&\leq  C\big|G(c\|h\|_{\h})\big|\bigg(  \|\widehat{h}\|_{q-var;[s,t]}+|t-s|\bigg)\leq C_{T}\big|G(c\|h\|_{\h})\big|\bigg(\|h\|_{\h}+1\bigg)|t-s|^{H-\kappa}.
    \end{aligned}
\end{equation}
Finally, the growth assumptions on $f_2$ guarantee that $\int_{s}^{\cdot}f_2(\widehat{h}_r, r)dW_r$ is $1/p$-H\"older continuous on $[s,t]$ which in turn implies that $$\bigg\|\int_{s}^{\cdot}f_2(\widehat{W}^H_r, r)dW_r \bigg\|_{\text{p-var};[s,t]}\leq \bigg\|\int_{s}^{\cdot}f_2(\widehat{W}^H_r, r)dW_r\bigg\|_{\Cc^{1/p}[s,t]}|t-s|^{1/p}. $$
Combining the last two estimates with Young's inequality we obtain the almost sure bound
\begin{equation}
    \bigg\|\int_{0}^{\cdot}f_2(\widehat{W}^H_r, r)f_1(\widehat{h}_r, r)dW_r\bigg\|_{\Cc^{H-\kappa+1/p}[0,T]}\leq C\big|G(c\|h\|_{\h})\big|\bigg(\|h\|_{\h}+1\bigg)K,
\end{equation}
where $K$ is a random variable with finite moments of all orders. As for the Riemann integral in~\eqref{eq:Itoshift}, Cauchy-Schwarz yields a similar estimate for its $\Cc^{1/2}[0,T]$ norm.

\subsection{Proof of Lemma~\ref{modelbndlem}(1)}\label{Subsection: modelbndlem}
Let $t\neq s\in[0,T]$. By linearity of $\Gamma_{t,s}$ and~\eqref{eq:structuregroup} we have 
	\begin{equation*}
	\begin{aligned}
	\Gamma_{t,s}f^{\Pi}(s)&=\sum_{k=0}^{M}\frac{1}{k!}\partial^k_1f\big((K^H*\Pi_s\Xi)(s),s\big)\Gamma_{t,s}I(\Xi)^k=\sum_{k=0}^{M}\frac{1}{k!}\partial^k_1f\big((K^H*\Pi_s\Xi)(s),s\big)\big(\Gamma_{t,s}I(\Xi)\big)^k\\&=\sum_{k=0}^{M}\sum_{m=0}^{M-k}\frac{1}{k!m!}\partial^{k+m}_1f\big((K^H*\Pi_t\Xi)(t),s\big)\big[ (K^H*\Pi_s\Xi)(s)-(K^H*\Pi_t\Xi)(t)     \big]^m\big(\Gamma_{t,s}I(\Xi)\big)^k\\&
	+\sum_{k=0}^{M}\frac{1}{k!(M-k)!}\bigg(\int_{(K^H*\Pi_t\Xi)(t)}^{(K^H*\Pi_s\Xi)(s)}\partial^{M+1}_1f\big(x,s\big)\big[ (K^H*\Pi_s\Xi)(s)-x\big]^{M-k}dx\bigg)\big(\Gamma_{t,s}I(\Xi)\big)^k,
	\end{aligned}
	\end{equation*}
	where we Taylor-expanded $\partial_1^kf$ around $(K^H*\Pi_t\Xi)(t)$ up to the $(M-k)$th degree. Writing $\partial_1^{k+m}f(\cdot,s)=\partial_1^{k+m}f(\cdot,t)+\int_{t}^{s}\partial_{1,2}^{k+m,1}f(\cdot,x)dx$, we obtain
		\begin{equation}
	\begin{aligned}
	\Gamma_{t,s}f^{\Pi}(s)&=\sum_{k=0}^{M}\sum_{m=0}^{M-k}\frac{1}{k!m!}\partial^{k+m}_{1}f\big((K^H*\Pi_t\Xi)(t),t\big)\big[ (K^H*\Pi_s\Xi)(s)-(K^H*\Pi_t\Xi)(t)     \big]^m\big(\Gamma_{t,s}I(\Xi)\big)^k\\&+R(t,s),
	\end{aligned}
	\end{equation}
	where the remainder is given by
	\begin{equation}\label{Rkform}
	\begin{aligned}
	R&(t,s):=\sum_{k=0}^{M}\frac{1}{k!(M-k)!}\bigg(\int_{(K^H*\Pi_t\Xi)(t)}^{(K^H*\Pi_s\Xi)(s)}\partial^{M+1}_1f\big(x,s\big)\big[ (K^H*\Pi_s\Xi)(s)-x\big]^{M-k}dx\bigg)\big(\Gamma_{t,s}I(\Xi)\big)^k
	\\&	+\sum_{k=0}^{M}\sum_{m=0}^{M-k}\frac{1}{k!m!}\bigg(\int_{t}^{s}\partial^{k+m,1}_{1,2}f\big((K^H*\Pi_t\Xi)(t),x\big)dx\bigg)\big[ (K^H*\Pi_s\Xi)(s)-(K^H*\Pi_t\Xi)(t)     \big]^m\big(\Gamma_{t,s}I(\Xi)\big)^k\\&
	=	\sum_{k=0}^{M}\tilde{R}_k(t,s)\big(\Gamma_{t,s}I(\Xi)\big)^k.
	\end{aligned}
	\end{equation}
	In view of the relations
$\Gamma_{t,s}I(\Xi)=I(\Xi)-\big(\Pi_tI(\Xi) \big) (s)\mathbf{1}$ 
and 
$$
(K^H*\Pi_s\Xi)(s)-(K^H*\Pi_t\Xi)(t)=\Pi_tI(\Xi)(s)\equiv \Pi_tI(\Xi)(s)\mathbf{1},
$$
which are directly inferred from~\eqref{eq:IXi}-\eqref{eq:structuregroup}-\eqref{eq:Gamma} 
and the binomial identity~\eqref{binom}, we obtain 
	\begin{equation}
\begin{aligned}
\Gamma_{t,s}f^{\Pi}(s)&-R(t,s)=\sum_{k=0}^{M}\sum_{m=0}^{M-k}\frac{1}{k!m!}\partial^{k+m}_{1}f\big((K^H*\Pi_t\Xi)(t),t\big)\big[ \Pi_tI(\Xi)(s)\big]^m\big[I(\Xi)-\big(\Pi_tI(\Xi) \big)(s) \mathbf{1}\big]^k\\&
=\sum_{k=0}^{M}\sum_{m'=k}^{M}\frac{1}{k!(m'-k)!}\partial^{m'}_{1}f\big((K^H*\Pi_t\Xi)(t),t\big)\big[ \Pi_tI(\Xi)(s)\big]^{m'-k}\big[I(\Xi)-\big(\Pi_tI(\Xi) \big)(s) \mathbf{1}\big]^k\\&
=\sum_{m'=0}^{M}\frac{\partial^{m'}_{1}f\big((K^H*\Pi_t\Xi)(t),t\big)}{m'!}\sum_{k=0}^{m'}\binom{m'}{k}\big[ \Pi_tI(\Xi)(s)\big]^{m'-k}\big[I(\Xi)-\big(\Pi_tI(\Xi) \big)(s) \mathbf{1}\big]^k\\&
=\sum_{m'=0}^{M}\frac{1}{m'!}\partial^{m'}_{1}f\big((K^H*\Pi_t\Xi)(t),t\big)I(\Xi)^{m'}=f^{\Pi}(t),
\end{aligned}
\end{equation}
where we set $m+k=m'$ and then interchanged the order of summation to obtain the third equality. Turning to the remainder, we have 
\begin{align}\label{remainderform}
R(t,s)
 & = \sum_{k=0}^{M}\tilde{R}_k(t,s)\big(\Gamma_{t,s}I(\Xi)\big)^k=\sum_{k=0}^{M}\tilde{R}_k(t,s)\big[I(\Xi)-\big(\Pi_tI(\Xi) \big) (s)\mathbf{1}\big]^k\nonumber\\
 & = \sum_{k=0}^{M}\tilde{R}_k(t,s)\big[I(\Xi)+\big(\Pi_sI(\Xi) \big) (t)\mathbf{1}\big]^k
=\sum_{k=0}^{M}\tilde{R}_k(t,s)\sum_{\ell=0}^{k}\binom{k}{\ell}\big(\Pi_sI(\Xi)(t) \big)^{k-\ell}I(\Xi)^{\ell}\nonumber\\
 & = \sum_{\ell=0}^{M} \left\{\sum_{k=\ell}^{M}\tilde{R}_k(t,s)\binom{k}{\ell}\big(\Pi_sI(\Xi)(t) \big)^{k-\ell}\right\}I(\Xi)^{\ell}.
\end{align}
The analogous expressions for  $\mathscr{D}(\Pi)$ follow by multiplying throughout by the symbol $\Xi$. The latter is possible since, in view of~\eqref{eq:structuregroup}, we have $\Gamma_{t,s}(I(\Xi)^k\Xi)=(\Gamma_{t,s}I(\Xi))^k\Xi$.
In view of~\eqref{remainderform}, along with the fact that $f^{\Pi}(t)-\Gamma_{t,s}f^{\Pi}(s)=R(t,s)$, we see that in order to estimate $\|f^\Pi\|_{\mathcal{D}^\gamma_{T}}$ (recall~\eqref{eq:Dgammanorm}), one has to bound the terms~$\tilde{R}_k$.
To this end, let $k\in{\ell,\dots,M}$ and $a=(K^H*\Pi_t\Xi)(t), b=(K^H*\Pi_s\Xi)(s)$. 
Assuming first $a<b$, a change of variable gives
\begin{equation}
\begin{aligned}
\int_{a}^{b}\partial^{M+1}_1f\big(x,s\big)\big[ b-x\big]^{M-k}dx&=\int_{0}^{b-a}\partial^{M+1}_1f\big(x+a,s\big)(b-a-x)^{M-k}dx\\&
\leq \int_{0}^{b-a}\big|\partial^{M+1}_1f\big(x+a,s\big)\big|(b-a-x)^{M-k}dx\\&
\leq C_{f,T}\int_{0}^{b-a}\big(1+G(|x+a|)\big)(b-a-x)^{M-k}dx\\&
\leq \frac{C_{f,T}}{M+1-k}(b-a)^{M+1-k}(1+G(|b|+2|a|)),
\end{aligned}
\end{equation}
where we used the assumption~\eqref{fgrowth}.
The case $b<a$ is symmetric, namely
\begin{align*}
-\int_{b}^{a}\partial^{M+1}_1 f\big(x,s\big)\big[ b-x\big]^{M-k}dx
 & =-\int_{0}^{a-b}\partial^{M+1}_1f\big(x+b,s\big)(-x)^{M-k}dx\\
& \leq \int_{0}^{a-b}\big|\partial^{M+1}_1f\big(x+b, s\big)\big|x^{M-k}dx\\
& \leq C_{f,T}\int_{0}^{a-b}(1+G(|x+b|))x^{M-k}dx\\
& \leq \frac{C_{f,T}}{M+1-k}(a-b)^{M+1-k}(1+G(|a|+2|b|)).
\end{align*}
Since~$G$ is non-decreasing, we combine both cases to obtain
$$\bigg|\int_{a}^{b}\partial^{M+1}_1f\big(x,s\big)\big[ b-x\big]^{M-k}dx\bigg|\leq \frac{C_{f,T}}{M+1-k}\bigg[1+G(2|a|+2|b|)\bigg]\big|b-a\big|^{M+1-k}.$$
Substituting back $a$ and $b$, the latter yields
\begin{align}\label{Rk1bnd}
&\int_{(K^H*\Pi_t\Xi)(t)}^{(K^H*\Pi_s\Xi)(s)}\partial^{M+1}_1f\big(x,s\big)\big[ (K^H*\Pi_s\Xi)(s)-x\big]^{M-k}dx\nonumber\\
 & \leq \frac{C_{f,T}}{M+1-k}\bigg[1+G\bigg(2\big|(K^H*\Pi_s\Xi)(s)\big|+2\big|(K^H*\Pi_t\Xi)(t)\big|\bigg)\bigg]\big|(K^H*\Pi_t\Xi)(t)-(K^H*\Pi_s\Xi)(s)\big|^{M+1-k}\nonumber\\
  & \leq \frac{C_{f,T}}{M+1-k}\bigg[1+G\bigg(4\big\|(K^H*\Pi\Xi)\big\|_{C[0,T]}\bigg)\bigg]\big\|K^H*\Pi\Xi\big\|^{M+1-k}_{\Cc^{H-\kappa}[0,T]}|t-s|^{(M+1-k)(H-\kappa)},
\end{align}
with $\kappa<H$, recalling that $K^H*\Pi\Xi$ is function-valued for the models we consider. 
It remains to estimate the second summand in~\eqref{Rkform}, smoother since $f(x,\cdot)$ is bounded. Indeed, 
\begin{equation*}
\begin{aligned}
\bigg|\int_{t}^{s}\partial^{k+m,1}_{1,2}f\big((K^H*\Pi_t\Xi)(t),x\big)dx\bigg|&\leq  |t-s| \sup_{x\in[0,T]}\big|\partial^{k+m,1}_{1,2}f\big((K^H*\Pi_t\Xi)(t),x\big)\big|\\&
\leq C_{f,T}|t-s|\bigg[    1+G\bigg(\big\|  K^H*\Pi\Xi\big\|_{C[0,T]}  \bigg)\bigg].
\end{aligned}
\end{equation*}
Thus, 
\begin{equation}\label{Rk2bnd}
\begin{aligned}
&\sum_{m=0}^{M-k}\frac{1}{k!m!}\bigg(\int_{t}^{s}\partial^{k+m,1}_{1,2}f\big((K^H*\Pi_t\Xi)(t),x\big)dx\bigg)\big[(K^H*\Pi_s\Xi)(s)-(K^H*\Pi_t\Xi)(t)     \big]^m\\&
\leq  C_{f,T}\bigg[    1+G\bigg(\big\|  K^H*\Pi\Xi\big\|_{C[0,T]}  \bigg)\bigg]\sum_{m=0}^{M-k}\frac{1}{k!m!}\big\|K^H*\Pi\Xi\big\|^{m}_{\Cc^{H-\kappa}[0,T]} |t-s|^{1+m(H-\kappa)}\\
& \leq \frac{C_{f,T}|t-s|(M+1-k)}{k!}\bigg[  1+G\bigg(\big\|  K^H*\Pi\Xi\big\|_{C[0,T]}  \bigg)\bigg]
\big(1\vee T^{(M-k)(H-\kappa)}\big) \bigg(1\vee\big\|K^H*\Pi\Xi\big\|^{M-k}_{\Cc^{H-\kappa}[0,T]}\bigg).
\end{aligned}
\end{equation}
Putting~\eqref{Rk1bnd},~\eqref{Rk2bnd} together we have 
\begin{equation}\label{Rkbnd}
\begin{aligned}
\frac{|\tilde R_k(t,s)|}{|t-s|+|t-s|^{(M+1-k)(H-\kappa)}}
\leq  \frac{C_{f,T}(M+1-k)}{k!}&\left(1\vee T^{(M-k)(H-\kappa)}\right) \bigg(1\vee\big\|K^H*\Pi\Xi\big\|^{M-k+1}_{\Cc^{H-\kappa}[0,T]}\bigg)\\&\times\bigg[  1+G\bigg(4\big\|  K^H*\Pi\Xi\big\|_{C[0,T]}  \bigg)\bigg].
\end{aligned}
\end{equation}
Since the models we consider are admissible (in the sense of~\cite[Lemma 3.19]{bayer2020regularity}), then $\Pi_sI(\Xi)(t)=K^H*\Pi_s\Xi(t)-K^H*\Pi_s\Xi(s)$.
In view of the latter, plugging the last estimate into~\eqref{remainderform} yields, for $\ell=0,\dots, M$ and $\beta=\ell(H-\kappa)$
\begin{equation}
\begin{aligned}
|R(t,s)|_{\beta}&=\bigg|  \sum_{k=\ell}^{M}\tilde{R}_k(t,s)\binom{k}{\ell}\big(\Pi_sI(\Xi)(t) \big)^{k-\ell}    \bigg|\leq \sum_{k=\ell}^{M}|\tilde{R}_k(t,s)|\binom{k}{\ell}|\Pi_sI(\Xi)(t)|^{k-\ell}\\&
\leq \sum_{k=\ell}^{M}\binom{k}{\ell}|\tilde{R}_k(t,s)|\big\|K^H*\Pi\Xi\big\|^{k-\ell}_{\Cc^{H-\kappa}[0,T]}|t-s|^{(k-\ell)(H-\kappa)}\\&
\leq C_{f,T}\big(1\vee T^{(M-\ell)(H-\kappa)}\big)\bigg(1\vee\big\|K^H*\Pi\Xi\big\|^{M-\ell+1}_{\Cc^{H-\kappa}[0,T]}\bigg)\bigg[  1+G\bigg(4\big\|  K^H*\Pi\Xi\big\|_{C[0,T]}  \bigg)\bigg]\times\\&\times\sum_{k=\ell}^{M}\binom{k}{\ell}\frac{M+1-k}{k!}\bigg( |t-s|^{1+(k-\ell)(H-\kappa)} + |t-s|^{(M+1-\ell)(H-\kappa)}         \bigg).
\end{aligned}
\end{equation}
Therefore, modulo a constant,
\begin{equation}
\begin{aligned}
\frac{|R(t,s)|_{\beta}}{|t-s|^{\frac{1}{2}+\kappa-\beta}}\lesssim\sum_{k=\ell}^{M}\binom{k}{\ell}\frac{M+1-k}{k!}\bigg( |t-s|^{\half-\kappa+k(H-\kappa)} + |t-s|^{(M+1)(H-\kappa)-\half-\kappa}         \bigg).
\end{aligned}
\end{equation}
Since $\kappa<H$ and $M$ is chosen according to~\eqref{Mchoice}, the exponents on the right-hand side are positive. Hence, for any $0<\gamma<((M+1)(H-\kappa)-\half-\kappa)\wedge(\half-\kappa)$,
\begin{equation}
\begin{aligned}
\frac{|R(t,s)|_{\beta}}{|t-s|^{\frac{1}{2}+\kappa+\gamma-\beta}}&\lesssim(M+1-\ell)\sum_{k=\ell}^{M}\frac{1}{\ell!(k-\ell)!}\bigg( |t-s|^{\half-\kappa-\gamma+k(H-\kappa)} + |t-s|^{(M+1)(H-\kappa)-\half-\kappa-\gamma}         \bigg)\\&
\leq \frac{(M+1-\ell)(M-\ell)}{\ell!}\bigg(1\vee T^{\alpha}   \bigg),
\end{aligned}
\end{equation}
where $\alpha=\half-\kappa-\gamma+M(H-\kappa)$.
Combining the last estimates and applying crude bounds for the terms that depend on~$\ell$, we obtain 
$$
\sup_{\substack{s\neq t\in[0,T]\\A\ni\beta<\frac{1}{2}+\kappa+\gamma}}\frac{\big| f^\Pi(t)-\Gamma_{t,s}f^\Pi(s)\big|_{\beta}}{|t-s|^{\frac{1}{2}+\kappa+\gamma-\beta}}
\leq C_{f,T, M}\bigg[  1+G\bigg(4\big\|  K^H*\Pi\Xi\big\|_{C[0,T]}  \bigg)\bigg]\bigg(1\vee\big\|K^H*\Pi\Xi\big\|^{M+1}_{\Cc^{H-\kappa}[0,T]}\bigg).
$$
After multiplying by~$\Xi$, the same estimate also implies that for $\ell=0,\dots,M$ and $\beta'=\ell(H-\kappa)-\half-\kappa$, 
\begin{equation}
\begin{aligned}
\sup_{\substack{s\neq t\in[0,T]\\A\ni\beta'<\gamma}}&\frac{\big| \mathscr{D}(\Pi)(t)-\Gamma_{t,s}\mathscr{D}(\Pi)(s)\big|_{\beta'}}{|t-s|^{\gamma-\beta'}}=\sup_{\substack{s\neq t\in[0,T]\\A\ni\beta<\frac{1}{2}+\kappa+\gamma}}\frac{\big| f^\Pi(t)-\Gamma_{t,s}f^\Pi(s)\big|_{\beta}}{|t-s|^{\frac{1}{2}+\kappa+\gamma-\beta}}\\&\leq C_{f,T,M}\bigg[  1+G\bigg(4\big\|  K^H*\Pi\Xi\big\|_{C[0,T]}  \bigg)\bigg]\bigg(1\vee\big\|K^H*\Pi\Xi\big\|^{M+1}_{\Cc^{H-\kappa}[0,T]}\bigg).
\end{aligned}
\end{equation}

\subsection{Proof of Lemma~\ref{modelbndlem}(2)}
Let $i=1,2, t\in[0,T]$. Since $$\mathscr{D}_f(\Pi^i)(t)=f^{\Pi^i}\star\Xi(t)=\sum_{m=0}^{M}\frac{1}{m!}\partial_1^{m}f\big((K^H*\Pi^i_t\Xi)(t),t\big)I(\Xi)^k\Xi,$$
our assumptions on~$f$ directly yield
\begin{equation}
\begin{aligned}
    \sup_{t\in[0,T]}\sup_{ A\ni\beta<\gamma}\big| \mathscr{D}_f(\Pi^1)(t)&-\mathscr{D}_f(\Pi^2)(t)\big|_{\beta}\leq c_{f,T, M}(1\vee\big\|K^H*\Pi^1\Xi\big\|_{C[0,T]}^N)\\&\times\big\|K^H*\Pi^1\Xi-K^H*\Pi^2\Xi\big\|_{C[0,T]}\vee\big\|K^H*\Pi^1\Xi-K^H*\Pi^2\Xi\big\|^N_{C[0,T]}.
\end{aligned}
\end{equation}
Turning to the remainder terms, a computation similar to~\eqref{remainderform} furnishes
$$\mathscr{D}_f(\Pi^i)(t)-\Gamma^i_{t,s}\mathscr{D}_f(\Pi^i)(s)=\sum_{\ell=0}^{M}\bigg\{\sum_{k=\ell}^{M}R^i_k(t,s)\binom{k}{\ell}\big(\Pi^i_sI(\Xi)(t) \big)^{k-\ell}\bigg\}I(\Xi)^{\ell}\Xi,$$
where, using the notation $(K^H*\Pi^i\Xi)(t,s):=(K^H*\Pi^i_s\Xi)(s)-(K^H*\Pi^i_t\Xi)(t)$, $i=1,2$,
	\begin{equation}\label{Rkform2}
\begin{aligned}
R^i_k(t,s)
 &:= \frac{\left(K^H*\Pi^i\Xi\right)(t,s)^{M+1-k}}{k!(M-k)!}
\left[\int_{0}^{1}\partial^{M+1}_1f\bigg((K^H*\Pi^i_t\Xi)(t)+\theta\bigg[ (K^H*\Pi^i\Xi)(t,s)\bigg],s\bigg)(1-\theta)^Md\theta\right]\\
&	+\sum_{m=0}^{M-k}\frac{1}{k!m!}\big[   (K^H*\Pi^i\Xi)(t,s) \big]^m     \bigg(\int_{t}^{s}\partial^{k+m,1}_{1,2}f\big((K^H*\Pi^i_t\Xi)(t),x\big)dx\bigg)\\&=:A^i_k(t,s)+B^i_k(t,s).
\end{aligned}
\end{equation}
Thus, 
\begin{equation}\label{eq:Remdecomp}
\begin{aligned}
\mathscr{D}_f(\Pi^2)(t)&-\Gamma^2_{t,s}\mathscr{D}_f(\Pi^2)(s)-\mathscr{D}_f(\Pi^1)(t)+\Gamma^1_{t,s}\mathscr{D}_f(\Pi^1)(s)\\&=\sum_{\ell=0}^{M}\bigg\{\sum_{k=\ell}^{M}\big[R^2_k(t,s)-R^1_k(t,s)\big]\binom{k}{\ell}\big(\Pi^2_sI(\Xi)(t) \big)^{k-\ell}\bigg\}I(\Xi)^{\ell}\Xi\\&
+\sum_{\ell=0}^{M}\bigg\{\sum_{k=\ell}^{M}R^1_k(t,s)\binom{k}{\ell}\big[\big(\Pi^2_sI(\Xi)(t)\big)^{k-\ell}-\big(\Pi^1_sI(\Xi)(t) \big)^{k-\ell}\big]\bigg\}I(\Xi)^{\ell}\Xi.
\end{aligned}
\end{equation} 
Starting from the first term on the right-hand side we have 
\begin{equation}\label{ABdecomp}
\begin{aligned}
R^2_k(t,s)-R^1_k(t,s)=A^2_k(t,s)-A^1_k(t,s)+B^2_k(t,s)-B^1_k(t,s)
\end{aligned}
\end{equation} 
and
\begin{equation}
\begin{aligned}
A^2_k(t,s)-A^1_k(t,s) & = \frac{1}{k!(M-k)!}\bigg[\bigg( (K^H*\Pi^2\Xi)(t,s)\bigg)^{M+1-k}-\bigg( (K^H*\Pi^1\Xi)(t,s)\bigg)^{M+1-k}
\bigg]\\
&\times\bigg(\int_{0}^{1}\partial^{M+1}_1f\bigg((K^H*\Pi^1_t\Xi)(t)+\theta\bigg[ (K^H*\Pi^1\Xi)(t,s)\bigg],s\bigg)(1-\theta)^Md\theta\bigg)\\
&
+\frac{\bigg( (K^H*\Pi^1\Xi)(t,s)\bigg)^{M+1-k}}{k!(M-k)!}
\int_{0}^{1}\bigg\{\partial^{M+1}_1f\bigg((K^H*\Pi^2_t\Xi)(t)+\theta\bigg[ (K^H*\Pi^2\Xi)(t,s)\bigg],s\bigg)\\
&-\partial^{M+1}_1f\bigg((K^H*\Pi^1_t\Xi)(t)+\theta\bigg[ (K^H*\Pi^1\Xi)(t,s)\bigg],s\bigg)\bigg\}(1-\theta)^Md\theta.
\end{aligned}
\end{equation} 
From the growth assumptions on~$f$ we obtain
\begin{equation}
\begin{aligned}
&k!(M-k)!|A^2_k(t,s)-A^1_k(t,s)|\\&\leq C_{f,T,M}1\vee \bigg\|K^H*\Pi^1\Xi\bigg\|_{\Cc^{H-\kappa}}^{N}\bigg|\bigg( (K^H*\Pi^2\Xi)(t,s)\bigg)^{M+1-k}-\bigg( (K^H*\Pi^1\Xi)(t,s)\bigg)^{M+1-k}
\bigg|\\&
+C'_{f,T,M}\bigg| (K^H*\Pi^1\Xi)(t,s)\bigg|^{M+1-k}\bigg(1\vee\big\|K^H*\Pi^1\Xi\big\|_{C^{H-\kappa}[0,T]}^N\bigg)\\&
\times \bigg| (K^H*\Pi^2\Xi)(t,s)-(K^H*\Pi^1\Xi)(t,s)\bigg|\vee \bigg| (K^H*\Pi^2\Xi)(t,s)-(K^H*\Pi^1\Xi)(t,s)\bigg|^N.
\end{aligned}
\end{equation} 
Now, letting $x_i=(K^H*\Pi^i_s\Xi)(s)-(K^H*\Pi^i_t\Xi)(t)$, and re-expanding $x_2^{M+1-k}$ around $x_1$ using~\eqref{binom} we continue the last estimate as follows:
\begin{equation}\label{Aestimate}
\begin{aligned}
k!(M-k)!|A^2_k(t,s)&-A^1_k(t,s)|\leq C_{f,T,M}1\vee \bigg\|K^H*\Pi^1\Xi\bigg\|_{\Cc^{H-\kappa}}^{N}\\&\times\sum_{j=1}^{M+1-k}\binom{M+1-k}{j}\bigg\|K^H*\Pi^1\Xi\bigg\|_{\Cc^{H-\kappa}}^{M+1-k-j}\bigg\|  K^H*\Pi^2\Xi-K^H*\Pi^1\Xi \bigg\|_{\Cc^{H-\kappa}}^j\\&
+C'_{f,T,M}|t-s|^{(M+1-k)(H-\kappa)}\bigg\| K^H*\Pi^1\Xi\bigg\|_{\Cc^{H-\kappa}}^{M+1-k}\bigg(1\vee\big\|K^H*\Pi^1\Xi\big\|_{C^{H-\kappa}[0,T]}^N\bigg)\\&
\times \bigg\|K^H*\Pi^2\Xi-K^H*\Pi^1\Xi\bigg\|_{\Cc^{H-\kappa}}\vee \bigg\|K^H*\Pi^2\Xi-K^H*\Pi^1\Xi\bigg\|^{N}_{\Cc^{H-\kappa}}\\&
\leq C'_{f,T,M}|t-s|^{(M+1-k)(H-\kappa)}1\vee \bigg\|K^H*\Pi^1\Xi\bigg\|_{\Cc^{H-\kappa}}^{N+M+1-k}\\&\times \bigg\|K^H*\Pi^2\Xi-K^H*\Pi^1\Xi\bigg\|_{\Cc^{H-\kappa}}\vee \bigg\|K^H*\Pi^2\Xi-K^H*\Pi^1\Xi\bigg\|^{N+M+1-k}_{\Cc^{H-\kappa}}.
%
\end{aligned}
\end{equation}
 As for the last terms in~\eqref{ABdecomp} we write
\begin{equation}
\begin{aligned}
&B^2_k(t,s)-B^1_k(t,s)\\&=\sum_{m=0}^{M-k}\frac{1}{k!m!}\int_{t}^{s}\bigg(\partial^{k+m,1}_{1,2}f\big((K^H*\Pi^2_t\Xi)(t),x\big)-\partial^{k+m,1}_{1,2}f\big((K^H*\Pi^1_t\Xi)(t),x\big)\bigg)dx\big[ (K^H*\Pi^1\Xi)(t,s)   \big]^m\\&
+\sum_{m=0}^{M-k}\frac{1}{k!m!}\bigg(\int_{t}^{s}\big[\partial^{k+m,1}_{1,2}f\big((K^H*\Pi^2_t\Xi)(t),x\big)-\partial^{k+m,1}_{1,2}f\big((K^H*\Pi^1_t\Xi)(t),x\big)\big]dx\bigg)\\&\quad\quad\times\bigg(\big[ (K^H*\Pi^2\Xi)(t,s)    \big]^m-\big[ (K^H*\Pi^1\Xi)(t,s)\big]^m\bigg)\\&
+\sum_{m=0}^{M-k}\frac{1}{k!m!}\bigg(\int_{t}^{s}\partial^{k+m,1}_{1,2}f\big((K^H*\Pi^1_t\Xi)(t),x\big) dx\bigg)\\&\quad\quad\times\bigg(\big[ (K^H*\Pi^2\Xi)(t,s)    \big]^m-\big[ (K^H*\Pi^1\Xi)(t,s)\big]^m\bigg).
\end{aligned}
\end{equation}
These terms can be bounded by using the following facts: 1) From~\eqref{unicontcondition},~$f$ and its derivatives are bounded on their second argument over compact time intervals. In particular, all the terms above can be bounded, up to a constant, in time by $|t-s|$ 2)~$f$ and its derivatives have at most polynomial growth of degree~$N$ on their first argument. The latter, along with another polynomial re-expansion argument yield
\begin{equation}\label{Bestimate}
\begin{aligned}
&|B^2_k(t,s)-B^1_k(t,s)|\leq C_{f,M,T}|t-s| 1\vee \bigg\|K^H*\Pi^1\Xi\bigg\|_{\Cc^{H-\kappa}}^{N+M-k}\\&
\times \bigg\|K^H*\Pi^2\Xi-K^H*\Pi^1\Xi\bigg\|_{\Cc^{H-\kappa}}\vee \bigg\|K^H*\Pi^2\Xi-K^H*\Pi^1\Xi\bigg\|^{N+M-k}_{\Cc^{H-\kappa}}.
\end{aligned}
\end{equation}
In view of~\eqref{ABdecomp},~\eqref{Aestimate} and~\eqref{Bestimate}, it follows that 

\begin{equation}
\begin{aligned}
&|R^2_k(t,s)-R^1_k(t,s)|\leq |A^2_k(t,s)-A^1_k(t,s)|+|B^2_k(t,s)-B^1_k(t,s)|\\&
\leq C'_{f,T, M}\bigg(|t-s|^{(M+1-k)(H-\kappa)}+|t-s|\bigg)\bigg(1\vee \bigg\|K^H*\Pi^1\Xi\bigg\|_{\Cc^{H-\kappa}}^{N+M+1-k}\bigg)\\&\times \bigg\|K^H*\Pi^2\Xi-K^H*\Pi^1\Xi\bigg\|_{\Cc^{H-\kappa}}\vee \bigg\|K^H*\Pi^2\Xi-K^H*\Pi^1\Xi\bigg\|^{N+M+1-k}_{\Cc^{H-\kappa}}.
\end{aligned}
\end{equation}

Returning to the first term in~\eqref{eq:Remdecomp} we have, using that $\Pi^i_sI(\Xi)(t)=K^H*\Pi^i_t\Xi-K^H*\Pi^i_s\Xi$,
\begin{equation}
\begin{aligned}
\bigg|&\sum_{k=\ell}^{M}\big[R^2_k(t,s)-R^1_k(t,s)\big]\binom{k}{\ell}\big(\Pi^2_sI(\Xi)(t) \big)^{k-\ell}\bigg|\\&\leq \sum_{k=\ell}^{M}2^{k-\ell+1}\binom{k}{\ell}\big|R^2_k(t,s)-R^1_k(t,s)\big|\bigg( \big|\Pi^2_sI(\Xi)(t)-\Pi^1_sI(\Xi)(t) \big|^{k-\ell}+ \big|\Pi^1_sI(\Xi)(t)\big|^{k-\ell}\bigg)\\&
\leq 2^{M-\ell+1}C'_{f,T, M}\sum_{k=\ell}^{M}\binom{k}{\ell}\bigg(|t-s|^{(M+1-\ell)(H-\kappa)}+|t-s|^{1+(k-\ell)(H-\kappa)}\bigg)\bigg(1\vee \big\|K^H*\Pi^1\Xi\big\|_{\Cc^{H-\kappa}[0,T]}^{N+M+1-k}  \bigg)\\& \times  \bigg(\big\|K^H*\Pi^2\Xi-K^H*\Pi^1\Xi\big\|_{\Cc^{H-\kappa}[0,T]}\vee \big\|K^H*\Pi^2\Xi-K^H*\Pi^1\Xi\big\|^{N+M+1-k}_{\Cc^{H-\kappa}[0,T]}\bigg)\\&\times \bigg( \big\|K^H*\Pi^2\Xi-K^H*\Pi^1\Xi\big\|_{\Cc^{H-\kappa}[0,T]}^{k-\ell}+ \big\|K^H*\Pi^1\Xi\big\|_{\Cc^{H-\kappa}[0,T]}^{k-\ell}\bigg)\\&
\leq   C'_{f, T, M}|t-s|^{(M+1-\ell)(H-\kappa)} 
\bigg(1\vee \big\|K^H*\Pi^1\Xi\big\|_{\Cc^{H-\kappa}[0,T]}^{N+M+1}  \bigg)\\&\times 
\bigg(\big\|K^H*\Pi^2\Xi-K^H*\Pi^1\Xi\big\|_{\Cc^{H-\kappa}[0,T]}\vee \big\|K^H*\Pi^2\Xi-K^H*\Pi^1\Xi\big\|^{N+M+1-\ell}_{\Cc^{H-\kappa}[0,T]}\bigg).
\end{aligned}
\end{equation}
Finally, for the second term in~\eqref{eq:Remdecomp}, similar estimates along with Lemma~\ref{modelbndlem} furnish
\begin{equation*}
\begin{aligned}
\bigg|\sum_{k=\ell}^{M}  & R^1_k(t,s)\binom{k}{\ell}
\left[\left(\Pi^2_sI(\Xi)(t)\right)^{k-\ell} - \left(\Pi^1_sI(\Xi)(t) \right)^{k-\ell}\right]\bigg|\\
 & \leq C_{f,M, T}|t-s|^{(M+1-\ell)(H-\kappa)} \bigg(1\vee \big\|K^H*\Pi^1\Xi\big\|_{\Cc^{H-\kappa}[0,T]}^{N+M+1}  \bigg)\\
 & \times \bigg(\big\|K^H*\Pi^2\Xi-K^H*\Pi^1\Xi\big\|_{\Cc^{H-\kappa}[0,T]}\vee \big\|K^H*\Pi^2\Xi-K^H*\Pi^1\Xi\big\|^{N+M+1-\ell}_{\Cc^{H-\kappa}[0,T]}\bigg).
\end{aligned}
\end{equation*}
A combination of the estimates in the last two displays concludes the proof.


\bibliographystyle{siam}
\bibliography{TPC}

\end{document}